\documentclass[11pt,reqno]{amsart}
\usepackage{amssymb,mathrsfs,graphicx,subfigure, enumerate}
\usepackage{amsmath,amsfonts,amssymb,amscd,amsthm,bbm}
\usepackage{extpfeil}
\usepackage{graphicx,colortbl}
\usepackage{float}
\usepackage{epsfig}
\usepackage{caption}
\usepackage{bm}
\graphicspath{{Figure/}}

\usepackage[margin=1in]{geometry}

\makeatletter
\@namedef{subjclassname@2020}{\textup{2020} Mathematics Subject Classification}
\makeatother



\title[Navier-Stokes-Korteweg equations]{Long-time behavior towards viscous-dispersive shock for Navier-Stokes equations of Korteweg type}

\author[Han]{Sungho Han}
\address[Sungho Han]{\newline Department of Mathematical Sciences \newline Korea Advanced Institute of Science and Technology, Daejeon  34141, Republic of Korea}
\email{sungho\_han@kaist.ac.kr}

\author[Kang]{Moon-Jin Kang}
\address[Moon-Jin Kang]{\newline Department of Mathematical Sciences \newline Korea Advanced Institute of Science and Technology, Daejeon  34141, Republic of Korea}
\email{moonjinkang@kaist.ac.kr}

\author[Kim]{Jeongho Kim}
\address[Jeongho Kim]{\newline Department of Applied Mathematics, \newline Kyung Hee University, 1732 Deogyeong-daero, Giheung-gu, Yongin-si, Gyeonggi-do 17104, Republic of Korea}
\email{jeonghokim@khu.ac.kr}

\author[Lee]{Hobin Lee}
\address[Hobin Lee]{\newline Department of Mathematical Sciences \newline Korea Advanced Institute of Science and Technology, Daejeon  34141, Republic of Korea}
\email{lcuh11@kaist.ac.kr}

\begin{document}
	\newtheorem{theorem}{Theorem}[section]
	\newtheorem{lemma}{Lemma}[section]
	\newtheorem{corollary}{Corollary}[section]
	\newtheorem{proposition}{Proposition}[section]
	\newtheorem{remark}{Remark}[section]
	\newtheorem{definition}{Definition}[section]
	
	\renewcommand{\theequation}{\thesection.\arabic{equation}}
	\renewcommand{\thetheorem}{\thesection.\arabic{theorem}}
	\renewcommand{\thelemma}{\thesection.\arabic{lemma}}
	\newcommand{\bbr}{\mathbb R}
	\newcommand{\bbz}{\mathbb Z}
	\newcommand{\bbn}{\mathbb N}
	\newcommand{\bbs}{\mathbb S}
	\newcommand{\bbp}{\mathbb P}
	\newcommand{\bbt}{\mathbb T}
	\newcommand{\<}{\langle}
	\renewcommand{\>}{\rangle}
	\newcommand{\T}{\mathbb{T}}
	\newcommand{\N}{\mathbb{N}}
	\newcommand{\R}{\mathbb{R}}
	\newcommand{\lt}{\left}
	\newcommand{\rt}{\right}
	\newcommand{\bq}{\begin{equation}}
	\newcommand{\eq}{\end{equation}}
	\newcommand{\e}{\varepsilon}
	\newcommand{\mc}{\mathcal{C}}
	\newcommand{\pa}{\partial}
	\newcommand{\tU}{\widetilde{U}}
	\newcommand{\tu}{\widetilde{u}}
	\newcommand{\tv}{\widetilde{v}}
	\newcommand{\tw}{\widetilde{w}}
	\newcommand{\pv}{p(v)}
	\newcommand{\tp}{\widetilde{p}}
	\newcommand{\tpv}{p(\widetilde{v})}
	\newcommand{\norm}[1]{\left\lVert#1\right\rVert}
	\newcommand{\beq}{\begin{equation}}
	\newcommand{\eeq}{\end{equation}}

	
	\subjclass[2020]{35Q35, 76N06} 
	
	\keywords{$a$-contraction with shift; asymptotic behavior; Navier-Stokes-Korteweg equations; viscous-dispersive shock}
	
	\thanks{\textbf{Acknowledgment.} S. Han, M.-J. Kang and H. Lee were partially supported by the National Research Foundation of Korea  (NRF-2019R1C1C1009355  and NRF-2019R1A5A1028324) }

	\begin{abstract} 
		We consider the so-called Naiver-Stokes-Korteweg(NSK) equations for the dynamics of compressible barotropic viscous fluids with internal capillarity. 
		We handle the time-asymptotic stability in 1D of the viscous-dispersive shock wave that is a traveling wave solution to NSK as a  viscous-dispersive counterpart of  a Riemann shock.
		More precisely, we prove that when the prescribed far-field states of NSK are connected by a single Hugoniot curve, then solutions of NSK tend to the viscous-dispersive shock wave as time goes to infinity. To obtain the convergence, we extend the theory of $a$-contraction with shifts, used for the Navier-Stokes equations, to the NSK system. The main difficulty in analysis for NSK is due to the third-order derivative terms of the specific volume in the momentum equation. To resolve the problem, we introduce an auxiliary variable that is equivalent to the derivative of the specific volume. 
	\end{abstract}
	
	\maketitle
	
	\tableofcontents
	
	\section{Introduction}\label{sec:1}
	\setcounter{equation}{0}
	
	A study on the fluid model with an internal capillarity effect dates back to the works of Van der Waals and Korteweg \cite{K01, V94}, where the stress tensor may depend on the high-order derivative of the density. Later, Duun and Serrin \cite{DS85} introduced a thermodynamically consistent fluid model for internal capillarity, called the Navier-Stokes-Korteweg(NSK) equations. After its introduction, the NSK system has drawn a lot of attention and there has been numerous literature on the mathematical theory and application, due to its strong relationship with the quantum fluid models. We refer to the following literature and references therein for the readers who are interested in the state-of-the-art results on the Korteweg type fluids \cite{AS22,BGV19,BVY21,CL20}. 
	
	In this paper, we are interested in the time-asymptotic stability of the one-dimensional compressible fluid model of the Korteweg type. Consider the one-dimensional barotropic NSK equations in the Lagrangian mass coordinates:
	\begin{equation}\label{eq:NSK}
	\begin{aligned}
	&v_t-u_x=0,\quad (t,x)\in \bbr_+\times\bbr,\\
	&u_t+p(v)_x=\mu\left(\frac{u_x}{v}\right)_x + \kappa\left( \frac{-v_{xx}}{v^5} + \frac{5v_x^2}{2v^6}\right)_x,
	\end{aligned}
	\end{equation}
	where the unknown functions $v=v(t,x)$ and $u=u(t,x)$ represent the specific volume and velocity of the fluid, respectively. The pressure $p=p(v)$ is given by the $\gamma$-law, that is,
	\begin{equation*}
	p(v)=bv^{-\gamma}, \quad b>0, \quad \gamma>1.
	\end{equation*}
	Here, the constants $\mu>0$ and $\kappa>0$ represent the viscosity coefficient and capillary coefficient of the fluid, respectively. For simplicity, we normalize the coefficients so that $b=1, \mu=1$, and $\kappa=1$. The initial data of the NSK system \eqref{eq:NSK} is given by $(v_0,u_0)$, whose far-field states are prescribed as constants:
	\[\lim_{x \to \pm \infty} (v_0(x),u_0(x))=(v_\pm,u_\pm).\]
	When $\kappa=0$, that is, the capillarity effect is ignored, the NSK system \eqref{eq:NSK} is reduced to the standard compressible Navier-Stokes(NS) equations:
	\begin{equation}
	\begin{aligned}\label{eq:NS}
	&v_t-u_x=0,\quad (t,x)\in\bbr_+\times\bbr,\\
	&u_t +p(v)_x=\mu\left(\frac{u_x}{v}\right)_x.
	\end{aligned}
	\end{equation}
	Among many interesting topics on the NS equations \eqref{eq:NS}, the large-time behavior of solutions to \eqref{eq:NS} is one of the most important and motivated problems, as it is related to the inviscid limit to the Euler equation. Due to its significance, there has been a lot of previous literature on the time asymptotic behavior of the NS equations. Among the numerous results on the time-asymptotic stability of the NS equations \eqref{eq:NS}, we refer to \cite{G86,HKK23,KVW23,MN85,MN86,MW10}, although the list is totally not exhaustive. These results naturally motivate us to study the time-asymptotic behavior of the solution to the NSK equations \eqref{eq:NSK}. The large-time behavior of the NSK equations has a close relationship with the solution to the Euler equation
	\begin{equation}\label{eq:Euler}
	\begin{aligned}
	&v_t-u_x=0,\quad (t,x)\in\bbr_+\times\bbr,\\
	&u_t+p(v)_x=0,
	\end{aligned}
	\end{equation}
	subject to the Riemann initial data
	\begin{equation}
	\label{Riemann-data}
	(v(0,x),u(0,x))=
	\begin{cases} 
	(v_-,u_-), &x<0,\\
	(v_+,u_+), &x>0,
	\end{cases}
	\end{equation}
	as in the Navier-Stokes equations case \cite{M18}. We focus on the case when the end states $(v_\pm,u_\pm)$  are connected by a single Hugoniot curve. Without loss of generality, we only handle the case of a 2-shock curve. In other words, for a given right-end state $(v_+,u_+)$ we consider the left-end state $(v_-,u_-)$ that is on the 2-shock curve $S_2(v_+,u_+)$ satisfying the following Rankine-Hugoniot conditions:
	\begin{align}\label{RH}
	\begin{cases}
	-\sigma(v_+-v_-)-(u_+-u_-)=0,\\
	-\sigma(u_+-v_-)-(p(v_+)-p(v_-))=0,
	\end{cases}
	\quad \sigma:=\sqrt{-\frac{p(v_+)-p(v_-)}{v_+-v_-}}>0,
	\end{align}
	and the entropy condition:
	\begin{equation*}
	v_-<v_+, \quad u_->u_+.
	\end{equation*}
	Then the Riemann solution $(\overline{v},\overline{u})$ to the Euler equations \eqref{eq:Euler}--\eqref{Riemann-data} is given by 2-shock wave
	\begin{equation}\label{Riemann-solution}
	(\overline{v}(t,x),\overline{u}(t,x))=
	\begin{cases}
	(v_-,u_-) &\text{if} \quad x<\sigma t,\\
	(v_+,u_+) &\text{if} \quad x>\sigma t.
	\end{cases}
	\end{equation}
	
	For the case of NSK equations \eqref{eq:NSK}, the counterpart of the Riemann solution \eqref{Riemann-solution} is a viscous-dispersive shock, as a traveling wave $(\tv,\tu)(x-\sigma t)$ solution to \eqref{eq:NSK}, that satisfies the following ODEs:
	\begin{align}
	\begin{aligned}\label{viscous-dispersive-shock}
	&-\sigma \tv'-\tu'=0,\\
	&-\sigma \tu'+p(\tv)'=\left( \frac{\tu'}{\tv}\right)'+\left( \frac{-\tv''}{\tv^5} +\frac{5 (\tv')^2}{2 \tv^6} \right)',\\
	&(\tv,\tu)(\pm\infty)=(v_\pm,u_\pm).
	\end{aligned}
	\end{align}
	Similar to the Navier-Stokes equations, the time-asymptotic stability of the NSK system has been investigated in many literature. A first study on the stability of the NSK equations is due to \cite{C12}, where the authors provided the stability and the large-time behavior of the solutions toward the rarefaction wave, followed by the analysis on the large-time behavior of the solution perturbed from the viscous-dispersive shock wave \cite{CHZ15}. We also mention several results on the stability of the non-isentropic Navier-Stokes-Kortweg system for the case of contact wave \cite{CX13} and the composition of contact and rarefaction waves \cite{CS19}. We also refer to the stability result for the planar rarefaction wave for the three-dimensional NSK equations \cite{LL22}.
	
	In particular, the authors in \cite{CHZ15} used a classical anti-derivative method (cf. \cite{MN86}) for obtaining the time-asymptotic stability of viscous-dispersive shock wave, where the zero-mass condition for the initial perturbation is crucially imposed. For the NS system as in  \cite{HKK23,KVW23}, this zero-mass constraint on the initial data was removed by using the theory of $a$-contraction with shifts. 
	
	Therefore, the goal of the paper is to prove the time-asymptotic stability of the viscous-dispersive shock wave for the NSK equation \eqref{eq:NSK} without the zero mass condition, based on the theory of $a$-contraction with shifts. 
	
	The method of $a$-contraction with shifts was developed in \cite{KV16} for the stability of extremal shocks in the hyperbolic system of conservation laws, especially for the Euler system. The first extension of the method to a viscous system was done in the 1D scalar case \cite{Kang-V-1}  (\cite{Kang19} for a more general case), and then in the multi-D case \cite{KVW}. In the context of the one-dimensional barotropic NS system, this method was used to prove the contraction property of any large perturbations for a single viscous shock in \cite{KV21,KV-Inven}, and for a composite wave of two shocks in \cite{KV-2shock}. Furthermore, the method was also used in \cite{KVW23} to show the long-time behavior of the barotropic NS system for the composition of shock and rarefaction under the 1D perturbation, and for a single shock under the multi-D perturbation in \cite{WW}. Its extension to the Navier-Stokes-Fourier system was discussed in \cite{KVW-NSF}. As for applications of the method to other viscous hyperbolic systems, we also refer to \cite{CKKV,CKV}, particularly in the context of the viscous hyperbolic system arising from a chemotaxis model. \\

	Our main theorem reads as follows.
	
	\begin{theorem}\label{thm:main}
		For a given state $(v_+,u_+)\in\bbr^+\times\bbr$, there exist positive constants $C_0,\delta_0$, and $\e_1$ such that the following holds.
		For any $(v_-,u_-)$ on the 2-shock curve $S_2(v_+,u_+)$, that is, satisfying the Rankine-Hugoniot condition \eqref{RH}, such that $|v_+-v_-|<\delta_0$, denote $(\tv,\tu)(x-\sigma t)$ the 2-viscous-dispersive shock defined in \eqref{viscous-dispersive-shock}. Let $(v_0,u_0)$ be any initial data such that
		\[\sum_{\pm}\left(\|v_0-v_{\pm}\|_{L^2(\R_\pm)}+\|u_0-u_{\pm}\|_{L^2(\R_{\pm})}\right)+\|v_{0x}\|_{H^1(\R)}+\|u_{0x}\|_{L^2(\R)}<\e_0,\]
		where $\R_-:=-\R_+=(-\infty,0)$. Then, the Navier-Stokes-Korteweg system \eqref{eq:NSK} admits a unique global-in-time solution $(v,u)$. Moreover, there exists a Lipschitz continuous shift $X(t)$ such that
		\begin{equation*}
		\begin{aligned}
		&v(t,x)-\tv(x-\sigma t-X(t))\in C(0,\infty;H^2(\R)),\\
		&u(t,x)-\tu(x-\sigma t-X(t))\in C(0,\infty;H^1(\R)).
		\end{aligned}
		\end{equation*}
		In addition, we have
		\[\lim_{t\to\infty}\sup_{x\in\R}\left|(v,u)(t,x)-(\tv,\tu)(x-\sigma t-X(t))\right|=0\]
		and
		\beq\label{xlimit}
		\lim_{t\to\infty} |\dot{X}(t)|=0.
		\eeq
	\end{theorem}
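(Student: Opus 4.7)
The plan is to follow the $a$-contraction with shifts framework used for the barotropic Navier--Stokes system in \cite{KV-Inven,KVW23}, adapted to accommodate the third-order capillarity term. Introduce the shifted profile $(\tv,\tu)(\xi)$ with $\xi=x-\sigma t-X(t)$, and write the perturbation as $(\phi,\psi)=(v-\tv,u-\tu)$. Because of the Korteweg term, the mechanical energy $\frac{\psi^2}{2}+Q(v\,|\,\tv)$ (with $Q$ the relative pressure potential) is no longer dissipative on its own. To remedy this, I would \emph{introduce an auxiliary variable} $w$ equivalent to a suitable power-weighted derivative of $v$ (schematically $w\sim v_x/v^{5/2}$, chosen so that $\tfrac12 w^2$ mimics the capillary energy $\tfrac{v_x^2}{2v^5}$), and work with the augmented relative entropy
\begin{equation*}
\eta(U\,|\,\tU) \;=\; \tfrac12 (u-\tu)^2 \;+\; Q(v\,|\,\tv) \;+\; \tfrac12 (w-\widetilde w)^2 .
\end{equation*}
Rewriting the momentum equation with $w$ turns the third-order contribution into a first-order coupling that can be integrated by parts cleanly against the weight $a(\xi)$.

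\textbf{Setting up $a$-contraction.} Following the prescription of \cite{KV-Inven}, I would take a weight $a(\xi)$ of the form $1+\lambda\,(\tv-v_-)/(v_+-v_-)$ with $\lambda\sim|v_+-v_-|$ small, and define the shift $X(t)$ as the solution of the ODE
\begin{equation*}
\dot X(t) \;=\; -\frac{M}{|v_+-v_-|}\int_{\R} a'(\xi)\,\bigl[\text{linear-in-perturbation terms}\bigr]\,d\xi,
\end{equation*}
tuned to cancel the worst linear cross-term arising from $\int a\,\pa_t \eta(U\,|\,\tU)$. Then I would compute $\frac{d}{dt}\int a\,\eta(U\,|\,\tU)\,d\xi$ and organize the result, as in \cite{KV-Inven}, into (i) a good dissipation $\mathcal{G}$ consisting of $\|\sqrt{-a'}\,(v-\tv)\|_{L^2}^2$, $\|\sqrt{-a'}\,(w-\widetilde w)\|_{L^2}^2$ plus viscous terms $\|u_x-\tu_x\|_{L^2}^2$ plus $|\dot X|^2$, and (ii) bad terms. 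The novel bad terms come from the capillary coupling: these involve $a'(w-\widetilde w)(v-\tv)$ and $a'\widetilde w_\xi(v-\tv)(w-\widetilde w)$, which I would absorb by a Poincaré-type inequality on intervals where $|a'|$ is supported, together with the smallness of the shock amplitude $\delta_0$, as in the sharp framework of \cite{KV-Inven}.

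\textbf{Higher-order estimates and global existence.} The relative entropy bound controls only $L^2$ of $(\phi,\psi,w-\widetilde w)$. To close in $H^2\times H^1$ as required by the statement, I would perform standard higher-order energy estimates on $\phi_x$, $\phi_{xx}$ and $\psi_x$, where the Korteweg term now supplies good damping for $\phi_{xxx}$ through the dispersive structure (after integration by parts against $\phi_{xxx}$-type multipliers). These are combined with the $a$-contraction estimate and a smallness bootstrap hypothesis $\sup_{[0,T]}\|(\phi,\psi)(t)\|_{H^2\times H^1}\le\e_1$ to close. Local existence of strong solutions in this class is classical for NSK (Cauchy theory via parabolic-dispersive smoothing), and combined with the uniform a priori bound it yields the global solution.

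\textbf{Asymptotics and $\dot X\to 0$.} From $\int_0^\infty \mathcal G(t)\,dt<\infty$ and uniform $H^2\times H^1$ bounds, $\|(\phi_x,\psi_x)\|_{L^2}$ is uniformly continuous, so $\|(\phi,\psi)\|_{L^\infty}\to 0$ by interpolation, giving the uniform convergence. The dissipation also yields $\int_0^\infty |\dot X|^2\,dt<\infty$; differentiating the defining ODE for $X$ and using the uniform bounds shows $\dot X$ is uniformly continuous in $t$, hence $\dot X(t)\to 0$, which proves \eqref{xlimit}. \textbf{The main obstacle} I anticipate is at the $a$-contraction step: controlling the cross terms generated by the capillary coupling without losing a derivative. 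The auxiliary-variable reformulation is designed precisely to place those bad terms in a form where the negative sign of $a'$ (guaranteed by monotonicity of $\tv$ on a 2-shock) together with the smallness of $\delta_0$ suffice for absorption; verifying this quantitatively, and simultaneously tracking the $w$-equation's own transport-type dissipation, is where the bulk of the technical effort will lie.
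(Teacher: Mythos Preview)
Your proposal is correct and follows essentially the same architecture as the paper: the augmented variable $w=-v_x/v^{5/2}$ and the extended relative entropy $\tfrac12(u-\tu)^2+Q(v|\tv)+\tfrac12(w-\tw)^2$, an $a$-contraction estimate with shift closed via the Poincar\'e-type inequality of \cite{KV21}, followed by higher-order energy estimates in the $(u,w)$ variables and a continuation argument. Two small discrepancies worth noting: the paper takes the weight variation of order $\sqrt{\delta_S}$ (so $a_x\sim|\tv_x|/\sqrt{\delta_S}$ and $a_x>0$ for the 2-shock, not negative), which is the scaling that makes the coefficients in the $\mathcal{B}_1-\mathcal{G}_2-\tfrac34\mathcal{D}$ balance come out correctly; and for \eqref{xlimit} the paper does not argue via uniform continuity of $\dot X$ but instead uses the direct pointwise bound $|\dot X(t)|\le C\|(v-\tv^X,u-\tu^X)(t)\|_{L^\infty}$ coming from the shift ODE, which goes to zero once the $L^\infty$-convergence is established.
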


	\begin{remark}
		Since \eqref{xlimit} implies
		$$
		\lim_{t\rightarrow+\infty}\frac{X(t)}{t}=0,
		$$
		the shift function $X(t)$ grows at most sub-linearly as $t\to\infty$. Thus, the shifted wave $\tU(x-\sigma t-X(t))$ tends to the original wave $\tU(x-\sigma t)$ time-asymptotically.
	\end{remark}

	\begin{remark}
		The results of Theorem \ref{thm:main} still hold for the NSK system with a general pressure $p(v)>0$ satisfying $p'(v) < 0, p''(v) > 0$ for $v>0$, and smooth viscosity $\mu=\mu(v)$ and smooth capillary $\kappa=\kappa(v)$, without meaningful added difficulties, since we consider small $H^2$-perturbations for $v$ variables. So, our result especially includes the cases of $\mu(v)=v^{-\alpha}$ and $\kappa(v)=v^{-\beta}$ for $\alpha, \beta\in\bbr$ as (cf. \cite{CCDZ})
		\begin{equation}
		\begin{aligned}\label{eq:NSK-general}
		&v_t-u_x=0,\quad (t,x)\in\bbr_+\times\bbr,\\
		&u_t +p(v)_x=\left(\frac{u_x}{v^{\alpha+1}}\right)_x+\left(-\frac{v_{xx}}{v^{\beta+5}}+\frac{\beta+5}{2}\frac{v_x^2}{v^{\beta+6}}\right)_x.
		\end{aligned}
		\end{equation}
		In particular, when $\beta=-1$, then the system \eqref{eq:NSK-general} represents the one-dimensional quantum fluid model in the Lagrangian coordinate. 
	\end{remark}
	
	The rest of the paper is organized as follows. In Section \ref{sec:prelim}, we provide several preliminaries, such as technical estimates on the relative quantities or the properties of the viscous-dispersive shock \eqref{viscous-dispersive-shock}. We also introduce an extended system for the Navier-Stokes-Korteweg equations in this section, which enables us to use the relative entropy method to the NSK system. Section \ref{sec:apriori} provides the a priori estimate on the perturbation, which guarantees the global existence of the solution to the NSK equation, as well as the time-asymptotic behavior of the solution. Then, we focus on proving a priori estimate. In Section \ref{sec:rel_ent}, we obtain $L^2$ estimates by the method of $a$-contraction with shift, and then we obtain the estimates on the high-order terms in Section \ref{sec:high-order}.

	\section{Preliminaries}\label{sec:prelim}
	\setcounter{equation}{0}
	In this section, we present several preliminary estimates on the relative quantities for the pressure and the internal energy. We also provide the existence and properties of viscous-dispersive shock in this section. Finally, we introduce several $O(1)$-constants and related estimates on them.
	
	\subsection{Estimates on the relative quantities}
	We present several upper and lower bounds on the relative quantities that will be used in estimating the relative entropy. For any function $F:(0,\infty)\to \bbr$ and $v,w\in (0,\infty)$, we define the relative quantity $F(v|w)$ as
	\[F(v|w):=F(v)-F(w)-F'(w)(v-w).\]
	In particular, when $F$ is convex, then the relative quantity is always positive. In the following lemma, we present several lower and upper bounds on the relative quantities for the pressure $p(v)=v^{-\gamma}$ and the internal energy $Q(v)=\frac{v^{1-\gamma}}{\gamma-1}$.
	
	\begin{lemma}\label{lem : Estimate-relative} Let $\gamma>1$ and $v_+$ be given constants. Then, there exists constants $C, \delta_*$ such that the following assertions hold:
		\begin{enumerate}
			\item For any $v,\bar{v}$ satisfying $0<\bar{v}<2v_+$ and $0<v<3v_+$,
			\begin{equation*}
			|v-\bar{v}|^2 \le C Q(v|\bar{v}), \quad |v-\bar{v}|^2 \le C p(v|w).
			\end{equation*}
			\item For any $v,\bar{v}$ satisfying $v,\bar{v} > v_+ /2$,
			\begin{equation*}
			|p(v)-p(\bar{v})| \le C |v-\bar{v}|.
			\end{equation*}
			\item For any $0<\delta<\delta_*$ and any $(v,\bar{v}) \in \mathbb{R}^2_+$ satisfying $|p(v)-p(\bar{v})|<\delta$ and $|p(\bar{v})-p(v_+)| < \delta,$
			\begin{equation*}
			\begin{aligned}
			&p(v|\bar{v}) \le \left( \frac{\gamma +1 }{2 \gamma } \frac{1}{p(\bar{v})} +C \delta \right) |p(v)-p(\bar{v})|^2,\\ 
			&Q(v|\bar{v}) \ge \frac{p(\bar{v})^{- \frac{1}{\gamma}-1}}{2 \gamma}|p(v)-p(\bar{v})|^2- \frac{1+\gamma}{3\gamma^2}p(\bar{v})^{- \frac{1}{\gamma}-2}(p(v)-p(\bar{v}))^3,\\ 
			&Q(v|\bar{v}) \le \left( \frac{p(\bar{v})^{- \frac{1}{\gamma}-1}}{2 \gamma} +C \delta \right)|p(v)-p(\bar{v})|^2.
			\end{aligned}
			\end{equation*}
			
		\end{enumerate}	
	\end{lemma}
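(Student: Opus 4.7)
The plan is to establish parts (1) and (2) by standard Taylor's theorem with integral remainder and the mean value theorem, and then reduce part (3) to an explicit one-dimensional integral after changing the variable of integration to $P=p(v)$. For part (1), I would write
\[ Q(v|\bar v) = \int_{\bar v}^v (v-s)\, Q''(s)\, ds, \qquad p(v|\bar v) = \int_{\bar v}^v (v-s)\, p''(s)\, ds, \]
and note that $Q''(s)=\gamma s^{-\gamma-1}$ and $p''(s)=\gamma(\gamma+1) s^{-\gamma-2}$ are continuous and strictly positive on any bounded subinterval of $(0,\infty)$. Under the hypotheses on $v,\bar v$, the range of integration lies in $(0,3v_+)$, so both second derivatives are bounded below by a positive constant depending only on $v_+$ and $\gamma$; the quadratic lower bounds on $Q(v|\bar v)$ and $p(v|\bar v)$ follow by direct integration. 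Part (2) is an immediate consequence of the mean value theorem, since on $(v_+/2,\infty)$ the quantity $|p'|$ is bounded by a constant depending only on $v_+$ and $\gamma$.

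For part (3), I would change variables via $P=p(s)=s^{-\gamma}$, so $s=P^{-1/\gamma}$ and $ds=-\frac{1}{\gamma}P^{-1/\gamma-1}\,dP$. Combined with the identity $Q'(v)=-p(v)$, this yields
\[ Q(v|\bar v) = \int_{\bar v}^v \bigl(p(\bar v) - p(s)\bigr)\, ds = \frac{1}{\gamma}\int_0^{P-\bar P}\eta\,(\bar P+\eta)^{-\frac{1}{\gamma}-1}\, d\eta, \]
where $P:=p(v)$ and $\bar P:=p(\bar v)$. Setting $h(\eta):=(\bar P+\eta)^{-1/\gamma-1}$ and applying the second-order Taylor formula $h(\eta)=h(0)+h'(0)\eta+\int_0^\eta(\eta-t)h''(t)\,dt$, term-by-term integration produces
\[ Q(v|\bar v) = \frac{\bar P^{-\frac{1}{\gamma}-1}}{2\gamma}(P-\bar P)^2 - \frac{\gamma+1}{3\gamma^2}\bar P^{-\frac{1}{\gamma}-2}(P-\bar P)^3 + R, \]
with remainder $R=\frac{1}{\gamma}\int_0^{P-\bar P}\eta\int_0^\eta(\eta-t)h''(t)\,dt\,d\eta$. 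Since $|P-\bar P|\le\delta$ and $h''$ is uniformly bounded on the relevant range, one has $|R|\le C\delta^2(P-\bar P)^2$, and the cubic term is bounded by $C\delta(P-\bar P)^2$; this yields the upper bound on $Q(v|\bar v)$. An analogous computation, starting from $p(v|\bar v)=-p'(\bar v)\int_0^{P-\bar P}(P-\bar P-\eta)\,v''(\bar P+\eta)\,d\eta$ with $v(P)=P^{-1/\gamma}$, gives the upper bound on $p(v|\bar v)$.

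The main obstacle is the lower bound on $Q(v|\bar v)$, which requires the cubic-order expansion to be sharpened into an exact inequality, namely $R\ge 0$. The key ingredient is the sign condition $h''(\eta)=(1/\gamma+1)(1/\gamma+2)(\bar P+\eta)^{-1/\gamma-3}>0$. A short case analysis shows that the inner integral $I(\eta):=\int_0^\eta(\eta-t)h''(t)\,dt$ is non-negative for every $\eta$: for $\eta>0$ the integrand $(\eta-t)h''(t)$ is non-negative on $[0,\eta]$, and for $\eta<0$ reversing the orientation $\int_0^\eta=-\int_\eta^0$ of a non-positive integrand yields the same conclusion. Rescaling via $\eta=(P-\bar P)\tau$ with $\tau\in[0,1]$ then gives
\[ R = \frac{(P-\bar P)^2}{\gamma}\int_0^1 \tau\, I\bigl((P-\bar P)\tau\bigr)\, d\tau \ge 0, \]
independently of the sign of $P-\bar P$, which completes the lower bound and hence the proof of the lemma.
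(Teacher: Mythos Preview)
Your argument is correct. The paper itself does not supply a proof of this lemma; it simply cites \cite[Lemmas 2.4--2.6]{KV21} and omits the details. Your approach---Taylor's integral remainder for parts (1)--(2) and the change of variable $P=p(v)$ followed by a second-order expansion of $(\bar P+\eta)^{-1/\gamma-1}$ for part (3)---is the standard route for these estimates and is what one finds in the cited reference. In particular, your sign analysis showing $R\ge 0$ via the positivity of $h''$ and the rescaling $\eta=(P-\bar P)\tau$ is a clean way to obtain the sharp cubic lower bound on $Q(v|\bar v)$ uniformly in the sign of $P-\bar P$; there is no gap.
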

	
	\begin{proof}
		Since the proofs are duplicates of those of \cite[Lemma 2.4, 2.5, and 2.6]{KV21}, we omit the proof.
	\end{proof}

	\subsection{Viscous-dispersive shock wave}
	In the following lemma, we present the existence of the viscous-dispersive shock wave, and its properties that are useful in our analysis. We consider a 2-shock connecting $(v_-,u_-)$ and $(v_+,u_+)$ such that $(v_-,u_-) \in S_2(v_+,u_+)$. The existence of viscous-dispersive shock wave for the Navier-Stokes-Korteweg equations was already studied in \cite{CHZ15}, but the condition on the end states for the existence is too complicated. In particular, the existence of the viscous-dispersive shock wave is not guaranteed even for a weak shock, that is, $|v_+-v_-|\ll1$. On the other hand, the authors in \cite{FPZ23} provide the method of slow manifold theory to show the existence of viscous-dispersive shock wave, which satisfies ODE that is similar to ours. Moreover, the only condition for the existence is the smallness of shock strength, as in our case. Therefore, we provide the existence and several important properties of the wave in the same spirit as in \cite{FPZ23}.
	
	\begin{lemma} \label{lem:shock-property}
		For a given right-end state $(v_+,u_+)$, there exists a positive constant $\delta_0$ such that the following statement holds. For any left end state $(v_-,u_-) \in S_2(v_+,u_+)$ with $|v_+-v_-| \sim |u_+ - u_-|=:\delta_S<\delta_0$, there exists a unique solution $(\tv,\tu)(\xi)$ to \eqref{viscous-dispersive-shock} such that $\tv(0)=\frac{v_-+v_+}{2}$. Moreover, the following estimates hold: there exists a positive constant $C$ such that
		\begin{equation}
		\begin{aligned}\label{shock-property}
		& \tu'<0,\quad \tv'>0,\\
		&C^{-1} \tv'(x-\sigma t) \le \tu'(x-\sigma t) \le C \tv' (x-\sigma t), \quad x \in \mathbb{R}, t>0,\\
		&|\tv(\xi)-v_\pm|\le C\delta_Se^{-C\delta_S|\xi|},\quad \pm \xi>0,\\
		&|\tv'(\xi)|\le C\delta_S^2e^{-C\delta_S|\xi|},\quad |\tv''(\xi)|\le C\delta_S|\tv'(\xi)|.
		\end{aligned}
		\end{equation}
	\end{lemma}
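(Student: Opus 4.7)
The plan is to follow the geometric singular perturbation approach of \cite{FPZ23}, reducing the system \eqref{viscous-dispersive-shock} to a scalar second-order ODE and then treating the dispersive contribution as a regular perturbation of the classical NS viscous-shock ODE in appropriately rescaled variables.

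First I would integrate the first equation in \eqref{viscous-dispersive-shock} once: using the far-field conditions, this yields $\tu(\xi)-u_\pm=-\sigma(\tv(\xi)-v_\pm)$, and in particular $\tu'=-\sigma\tv'$. Substituting this into the momentum equation and integrating once from $-\infty$ to $\xi$ (using that $\tv',\tv''\to 0$ at $-\infty$) produces a closed second-order scalar ODE
\[\tv''+\sigma\tv^4\tv'-\frac{5(\tv')^2}{2\tv}=\tv^5 h(\tv),\qquad h(\tv):=-\sigma^2(\tv-v_-)-(p(\tv)-p(v_-)).\]
The Rankine-Hugoniot condition \eqref{RH} gives $h(v_-)=h(v_+)=0$, and strict convexity of $p$ implies $h>0$ on $(v_-,v_+)$.

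Next I would recast this as a planar first-order system in $(\tv,w)=(\tv,\tv')$, so that the equilibria $(v_\pm,0)$ are hyperbolic. Linearization at each $v_\pm$ gives one stable and one unstable real eigenvalue whose magnitudes are $O(\delta_S)$ for weak shocks. To produce the heteroclinic connection, rescale via $\tv=v_-+\delta_S Y$, $\xi=\eta/\delta_S$, $\tv'=\delta_S^2 Z$. In these variables the $\tv''$ term becomes an $O(\delta_S^2)$ regular perturbation of a first-order slow ODE whose reduced flow admits the classical NS viscous shock as a monotone heteroclinic. Geometric singular perturbation theory then yields persistence of the one-dimensional unstable manifold of $(v_-,0)$ and shows that it meets the stable manifold of $(v_+,0)$, giving a unique (up to translation) heteroclinic orbit; we fix the translation by $\tv(0)=(v_-+v_+)/2$.

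Finally I would extract the qualitative estimates \eqref{shock-property}. Monotonicity $\tv'>0$ is inherited from the unperturbed NS profile since the NSK heteroclinic lies $O(\delta_S^2)$-close to it in the rescaled phase plane; combined with $\tu'=-\sigma\tv'$ this gives $\tu'<0$ and the two-sided comparison between $|\tu'|$ and $\tv'$. The exponential decay $|\tv(\xi)-v_\pm|\le C\delta_S e^{-c\delta_S|\xi|}$ follows from the stable/unstable manifold theorem applied at each equilibrium, the prefactor $\delta_S$ coming from the wave amplitude and the rate $c\delta_S$ from the size of the linearization; differentiating once yields $|\tv'|\le C\delta_S^2 e^{-c\delta_S|\xi|}$. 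The bound $|\tv''|\le C\delta_S|\tv'|$ is then read directly off the ODE, each term on the right-hand side contributing at most $O(\delta_S)\cdot|\tv'|$ in the weak-shock regime.

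The main obstacle is the presence of the $\tv''$ term coming from the third-order dispersive contribution in the PDE: it turns what is a one-dimensional phase portrait for NS into a genuinely two-dimensional one, and for large $\delta_S$ can even produce oscillatory, non-monotone profiles (which is why \cite{CHZ15} required complicated conditions on the end states). The slow manifold framework circumvents this by exploiting the separation between the $O(\delta_S)$ amplitude scale and the $O(1)$ dispersive length scale; the technical step is to verify that the rescaled system meets the normal hyperbolicity hypotheses needed for persistence of the slow manifold and to quantify the $O(\delta_S^2)$ closeness of the NSK orbit to the NS profile in order to transfer monotonicity and the sharp decay rates.
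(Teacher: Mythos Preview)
Your proposal is correct and follows essentially the same route as the paper: reduction to a second-order scalar ODE for $\tv$, rescaling into slow variables of size $\delta_S$, and application of Fenichel-type slow manifold theory (as in \cite{FPZ23}) to obtain a monotone heteroclinic on the reduced system, from which the derivative bounds and exponential decay are read off. The only cosmetic differences are that the paper obtains monotonicity directly from positivity of the reduced vector field on the slow manifold rather than by closeness to the NS profile, and derives the exponential tails by a direct differential-inequality argument rather than invoking the stable/unstable manifold theorem.
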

	
	\begin{proof}
		Since the proof is technical and lengthy, we postpone the proof to Appendix \ref{sec:app-shock-proof}.
	\end{proof}
	
	\begin{remark}
		We remark that the monotonicity of the viscous-dispersive shock is a consequence of the smallness of shock strength. Indeed, when the shock strength $\delta_S$ is not small, we cannot guarantee the monotonicity of the shock profile, and there might exist oscillation in the profile. See Figure \ref{fig:1} for the numerical simulation of the viscous-dispersive shocks with different shock strengths. 
	\end{remark}
	
	\begin{figure}[h!]
		\includegraphics[width=0.48\textwidth]{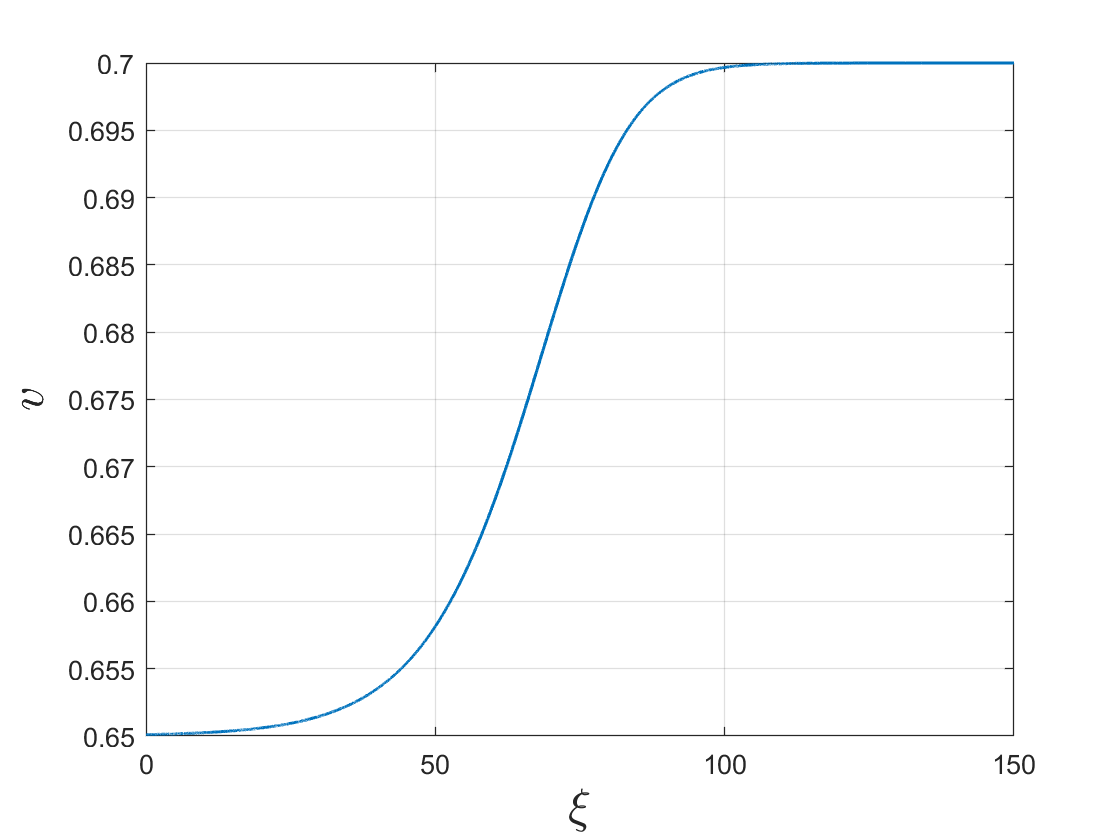}
		\includegraphics[width=0.48\textwidth]{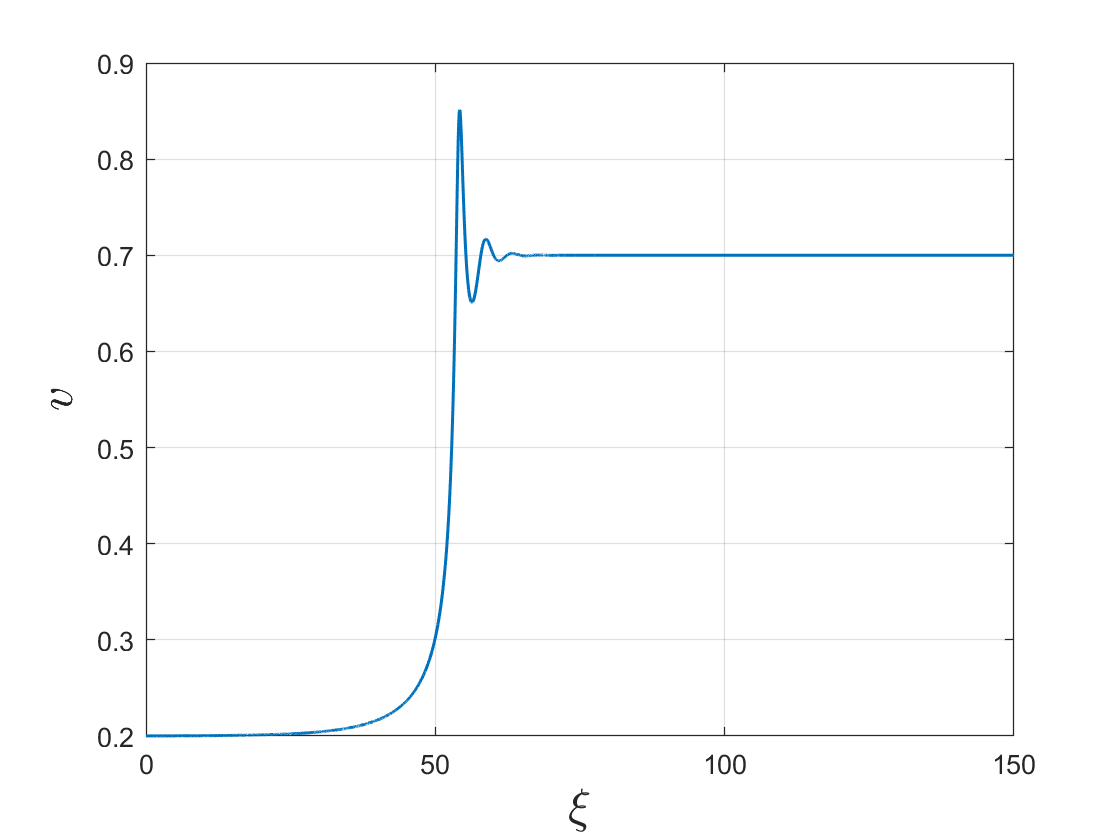}
		\caption{Profiles of $\tv$ for weak shock $\delta_S=0.05$ (left) and for strong shock $\delta_S=0.5$ (right), when the right-end state is fixed as $v_+=0.7$. The profile of the small shock is monotone, while that of the large shock has an oscillation. We also note that, even in the small shock case, the viscous-dispersive shock is not symmetric with respect to the inflection point, unlike the viscous shock of the classical Navier-Stokes equations.}
		\label{fig:1}
	\end{figure}
	
	\subsection{Useful $O(1)$-constants}
	In the later analysis, we will use the following $O(1)$-constants defined as
	\begin{equation} \label{O(1) constnats}
	\sigma_\ell:=\sqrt{-p'(v_-)}, \quad \alpha_\ell := \frac{\gamma+1}{2 \gamma \sigma_\ell p(v_-)}=\frac{p''(v_-)}{2|p'(v_-)|^2 \sigma_\ell}.
	\end{equation}
	These constants are indeed independent of the small shock strength $\delta_S$ since $v_+/2 \le v_- \le v_+$. Then, the following estimates on the $O(1)$-constants hold:
	\begin{equation}\label{shock_speed_est}
	|\sigma -\sigma_\ell|=\left|\sqrt{-\frac{p(v_+)-p(v_-)}{v_+-v_-}}-\sqrt{-p'(v_-)}\right| \le C \delta_S.
	\end{equation}
	Moreover, thanks to Lemma \ref{lem:shock-property}, the shock profile is monotone for the weak shock, and therefore $v_-\le \tv(x-\sigma t)\le v_+$ for all $x\in \R$. This yields the following estimates
	\begin{align}
	\begin{aligned}\label{shock_speed_est-2}
	&\|\sigma_\ell^2+p'(\tv)\|_{L^\infty}=\|p'(\tv)-p'(v_-)\|_{L^\infty} \le C \delta_S, \\
	&\left\| \frac{1}{\sigma_\ell^2}-\frac{p(\tv)^{-\frac{1}{\gamma}-1}}{\gamma} \right\|_{L^\infty}=\left\|\frac{(v_-)^{\gamma+1}}{\gamma }-\frac{p(\tv)^{-\frac{1}{\gamma}-1}}{\gamma}\right\|_{L^\infty} \le C \delta_S.
	\end{aligned}
	\end{align} 
	Throughout the paper, $C$ denotes a positive $O(1)$-constant which may change from line to line, but which is independent of the small constants like $\delta_S, \e_1$ and the lifespan $T$ given in Proposition \ref{apriori-estimate}.\\
	
	\subsection{Augmented system}
	We close this section, by introducing the augmented system for \eqref{eq:NSK}. Our first observation is that the natural dissipative entropy (or energy) for the NSK system \eqref{eq:NSK} is given by
	\[\int_{\R} \left(\frac{|u|^2}{2}+ Q(v) + \frac{(v_x)^2}{2v^5}\right)\,dx.\]
	Therefore, it is natural to introduce an auxiliary variable $w$ defined as
	\[w = -\frac{v_x}{v^{5/2}},\]
	so that the entropy can be written in terms of extended variable $U=(v,u,w)$ as
	\begin{equation}\label{entropy}
	\eta(U):=\int_{\R} \left(\frac{|u|^2}{2}+Q(v)+\frac{|w|^2}{2}\right)\,dx,
	\end{equation}
	whose relative functional would have a natural quadratic structure with respect to the variables $(v,u,w)$. Therefore, in order to use the well-established relative entropy method to control the $L^2$-perturbation, it is more convenient to consider an augmented system consisting of three variables $(v,u,w)$, instead of the original NSK equations \eqref{eq:NSK}. Using the equation of $v$, we deduce that $w$ satisfies
	\[w_t=\left( -\frac{v_t}{v^{5/2}} \right)_x=\left( -\frac{u_x}{v^{5/2}}\right)_x,\]
	and the Korteweg term in the momentum equation can be represented in terms of $w$ as
	\[-\frac{v_{xx}}{v^5}+\frac{5v^2_x}{2v^6}=\frac{w_x}{v^{5/2}}.\]
	Thus, the NSK system \eqref{eq:NSK} can be transformed into the following extended system with respect to $(v,u,w)$:
	\begin{equation}\label{NSK-w}
	\begin{cases}
	v_t-u_x=0,\\
	u_t+p(v)_x=\left(\frac{u_x}{v}\right)_x+ \left(\frac{w_x}{v^{5/2}}\right)_x,\\
	w_t=\left( -\frac{u_x}{v^{5/2}}\right)_x.
	\end{cases}
	\end{equation}
	Henceforth, we refer \eqref{NSK-w} as the NSK system, instead of the original system \eqref{eq:NSK}, unless otherwise specified. We also extend the viscous-dispersive shock wave $(\widetilde{v},\widetilde{u})$ to $(\widetilde{v},\widetilde{u},\widetilde{w})$, which obviously satisfies
	\begin{align}
	\begin{aligned}\label{viscous-dispersive-shock-ext}
	&-\sigma \tv_\xi-\tu_\xi=0,\\
	&-\sigma \tu_\xi+(p(\tv))_\xi=\left( \frac{\tu_\xi}{\tv} \right)_\xi+\left( \frac{\tw_\xi}{\tv^{5/2}} \right)_\xi,\\
	&-\sigma \tw_\xi=\left(-\frac{\tu_\xi}{\tv^{5/2}} \right)_\xi.
	\end{aligned}
	\end{align}
	
	As we will see in Section \ref{sec:rel_ent}, the system \eqref{NSK-w} can be written as a general hyperbolic equation, so that the classical relative entropy estimates \cite{D96, D79} can be directly applied.

	\section{A priori estimate and Proof of Theorem \ref{thm:main}}\label{sec:apriori}
	\setcounter{equation}{0}
	
	In this section, we first provide the a priori estimate for the perturbation, which is the key estimate for the main theorem. The proof of a priori estimate is presented in the next two sections. After stating the a priori estimate, we prove the global existence and time-asymptotic behavior of the solution, completing the proof of Theorem \ref{thm:main}.
	
	\subsection{Local existence}
	
	We first provide the local existence of strong solutions to the original NSK system \eqref{eq:NSK}, or equivalently, the NSK system \eqref{NSK-w}.
	
	\begin{proposition}\label{prop:local}
		Let $\underbar{v}$ and $\underbar{u}$ be smooth monotone functions such that
		\[\underbar{v}(x) = v_{\pm},\quad\underbar{u}(x)=u_{\pm},\quad\mbox{for}\quad\pm x\ge 1.\]
		Then, for any constants $M_0$, $M_1$, $\underline{\kappa}_0$, $\overline{\kappa}_0$, $\underline{\kappa}_{1}$, and $\overline{\kappa}_1$ with
		\[0<M_0<M_1,\quad\mbox{and}\quad0<\underline{\kappa}_1<\underline{\kappa}_0<\overline{\kappa}_0<\overline{\kappa}_1,\]
		there exists a finite time $T_0>0$ such that if the initial data $(v_0,u_0)$ satisfy
		\[\|v_0-\underline{v}\|_{H^2(\R)}+\|u_0-\underline{u}\|_{H^1(\R)}\le M_0,\quad\mbox{and}\quad \underline{\kappa}_0\le v_0(x)\le\overline{\kappa}_0,\quad\forall x\in\R,\]
		the Navier-Stokes-Korteweg equations \eqref{eq:NSK} admit a unique solution $(v,u)$ on $[0,T_0]$ satisfying
		\begin{align*}
		v-\underline{v}\in L^\infty ([0,T_0];H^2(\R))\cap L^2([0,T_0];H^3(\R)),\quad u-\underline{u}\in L^\infty([0,T_0];H^1(\R))\cap L^2([0,T_0];H^2(\R)),
		\end{align*}
		\[\|v-\underline{v}\|_{L^\infty([0,T_0];H^2(\R))}+\|u-\underline{u}\|_{L^\infty([0,T_0];H^1(\R))}\le M_1\]
		and
		\[\underline{\kappa}_1\le v(t,x)\le \overline{\kappa}_1,\quad \forall(t,x)\in [0,T_0]\times \R.\]
	\end{proposition}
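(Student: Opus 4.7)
The plan is to construct the local solution by a Picard-type iteration applied to the augmented system \eqref{NSK-w}. The structural advantage of working with the triple $(v,u,w)$, where $w=-v_x/v^{5/2}$, is that the third-order Korteweg term in \eqref{eq:NSK} is replaced by the second-order expression $(w_x/v^{5/2})_x$ in the momentum equation, so every unknown carries at most two spatial derivatives in the principal part, which puts the system within reach of standard parabolic energy methods.

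First I would pass to perturbations $\phi:=v-\underline{v}$, $\psi:=u-\underline{u}$, $\omega:=w-\underline{w}$ with $\underline{w}:=-\underline{v}_x/\underline{v}^{5/2}$, so that the unknowns lie in the Sobolev spaces $\phi\in H^2(\R)$, $\psi\in H^1(\R)$, $\omega\in H^1(\R)$ and the equations carry smooth forcing from the background. Given an iterate $(v^n,u^n)$ with $v^n\in[\underline{\kappa}_1,\overline{\kappa}_1]$ and $(\phi^n,\psi^n)$ bounded in the target spaces, I would solve the linear parabolic equation
\begin{equation*}
u^{n+1}_t+p(v^n)_x=\left(\frac{u^{n+1}_x}{v^n}\right)_x+\left(\frac{w^n_x}{(v^n)^{5/2}}\right)_x,\qquad u^{n+1}(0,\cdot)=u_0,
\end{equation*}
with $w^n:=-v^n_x/(v^n)^{5/2}$ treated as a known source, using classical linear parabolic theory (Galerkin approximation, for example), and then define $v^{n+1}$ by integrating $v^{n+1}_t=u^{n+1}_x$ with $v^{n+1}(0,\cdot)=v_0$; the consistency relation $w^{n+1}=-v^{n+1}_x/(v^{n+1})^{5/2}$ is then automatic.

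Next I would derive uniform a priori estimates on the iterates. The $L^\infty_t L^2_x$ bound for $(\psi,\omega)$ follows from a symmetric energy estimate for the $u$- and $w$-equations in which the mixed terms involving $(v^n)^{-5/2}$ cancel after integration by parts; higher-order bounds in $H^2\times H^1\times H^1$ are obtained by differentiating this identity up to two (respectively one) times. The pointwise bounds $\underline{\kappa}_1\le v^{n+1}\le\overline{\kappa}_1$ are propagated using $v^{n+1}_t=u^{n+1}_x$ together with the parabolic regularity of $u^{n+1}$, provided $T_0$ is chosen small enough to absorb the resulting drift. Contraction of the iteration in $L^\infty(0,T_0;L^2)$ is then obtained by the same energy computation applied to the differences of successive iterates; weak compactness transfers the uniform higher regularity to the limit, and uniqueness follows from the analogous estimate on the difference of two solutions.

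The main obstacle I anticipate is closing the $H^2$ energy estimate for $u^{n+1}$ when the source $\bigl((v^n)^{-5/2}w^n_x\bigr)_x$ is expanded, since it then contains $v^n_{xxx}$. This forces me to include $L^2(0,T_0;H^3)$ regularity for $v$ in the iteration hypothesis, which is in turn produced by pairing the parabolic smoothing of $u^{n+1}$ (giving $u^{n+1}\in L^2(H^2)$) with $v^{n+1}_t=u^{n+1}_x$ and integration in time to gain one more spatial derivative for $v^{n+1}$. This parabolic smoothing is the crucial ingredient that closes the induction, and arranging the two regularity gains so that they are mutually compatible at each step is the main technical point.
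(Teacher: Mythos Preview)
Your overall strategy---iterate on the augmented system and close via energy estimates---matches what the paper invokes (it omits the proof, citing a standard approximation argument), but the specific iteration you set up has a gap at precisely the point you flag as the main difficulty.

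The claim that $v^{n+1}\in L^2_tH^3_x$ is ``produced by pairing the parabolic smoothing of $u^{n+1}$ (giving $u^{n+1}\in L^2(H^2)$) with $v^{n+1}_t=u^{n+1}_x$ and integration in time'' is incorrect. The transport equation $v^{n+1}_t=u^{n+1}_x$ transfers spatial regularity from $u$ to $v$ with a \emph{loss} of one derivative, not a gain: from $u^{n+1}\in L^2_tH^2_x$ one obtains only $v^{n+1}-v_0=\int_0^t u^{n+1}_x\,ds\in C_tH^1_x$, so with $v_0\in H^2$ you cannot even recover $v^{n+1}\in L^\infty_tH^2_x$, let alone the $L^2_tH^3_x$ regularity needed to bound the Korteweg source at the next step. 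Your decoupled scheme therefore does not reproduce its own induction hypothesis.

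The missing mechanism is dispersive rather than parabolic: the $L^2_tH^2_x$ control on $w$ (equivalently $L^2_tH^3_x$ on $v$) comes from the cross-energy $\frac{d}{dt}\int\psi_x\omega_x\,dx$, where differentiating the $u$-equation once and pairing with $\omega_x$ turns the Korteweg term $(\omega_x/v^{5/2})_{xx}$ into the good term $\int v^{-5/2}|\omega_{xx}|^2\,dx$ after integration by parts. This is exactly the structure exploited in Lemma~\ref{lem: est-2} of the paper. To make your iteration close you must keep this coupling at least semi-implicit---solve for the pair $(u^{n+1},w^{n+1})$ simultaneously from the linear system with frozen coefficient $v^n$---so that the cross-energy identity is available at each step; then the uniform bounds and the contraction argument proceed as you outline.
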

	
	\begin{proof}
		The proof of the local existence can be obtained by using the standard argument of generating a sequence of approximate solutions and the Cauchy estimate, see for example \cite{S76}. For the brevity of the paper, we omit the proof.
	\end{proof}
	
	\subsection{Construction of shift}
	Next, we introduce the shift $X:\bbr_+\to\bbr$ as a solution to the following ODE:
	\begin{align}
	\begin{aligned}\label{ODE_X}
	\dot{X}(t)&=-\frac{M}{\delta_S}\Bigg(\int_{\R} a\left(x- \sigma t- X(t)\right)\widetilde{u}_x\left(x- \sigma t- X(t)\right)\big(u-\widetilde{u}(x- \sigma t- X(t)\big)\,d x\\
	&\hspace{2cm}+\frac{1}{\sigma}\int_\R a(x- \sigma t- X(t))\pa_xp\big(\widetilde{v}(x- \sigma t- X(t))\big)\big(v-\widetilde{v}(x- \sigma t- X(t))\big)\,d x\Bigg),
	\end{aligned}
	\end{align}
	where $M=\frac{5\sigma_\ell^3\alpha_\ell}{4}$. Then, the standard existence theorem for the ODE can be applied to guarantee the existence of the shift.
	
	\begin{proposition}
		For any $c_1,c_2,c_3>0$, there exists a constant $C>0$ such that the following is true. For any $T>0$, and any function $v,u\in L^\infty((0,T)\times\R)$ with
		\[c_1\le v(t,x)\le c_2,\quad |u(t,x)|\le c_3,\quad(t,x)\in[0,T]\times\R,\]
		the ODE \eqref{ODE_X} has a unique Lipschitz continuous solution $X$ on $[0,T]$. Moreover, we have
		\[|X(t)|\le Ct,\quad t\in[0,T].\]
	\end{proposition}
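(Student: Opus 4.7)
The plan is to cast \eqref{ODE_X} as the Carathéodory ODE $\dot X(t)=F(t,X(t))$ with $X(0)=0$, where
\[
F(t,X):=-\frac{M}{\delta_S}\left(\int_\R a\,\tu_x(u-\tu)\,dx+\frac{1}{\sigma}\int_\R a\,\pa_x p(\tv)(v-\tv)\,dx\right),
\]
with $a,\tu_x,\tu,\tv$ evaluated at $x-\sigma t-X$. Since the hypotheses only provide $L^\infty$ bounds on $v,u$, I would invoke the Carathéodory existence and uniqueness theorem rather than classical Picard--Lindelöf: this requires (i) that $t\mapsto F(t,X)$ be measurable for each fixed $X$, (ii) a uniform bound $|F(t,X)|\le C$ on $[0,T]\times\R$, and (iii) a uniform-in-$t$ Lipschitz bound on $X\mapsto F(t,X)$.

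Step (ii) is immediate from Lemma \ref{lem:shock-property}: $\tu_x$ and $\pa_x p(\tv)$ decay exponentially and hence lie in $L^1(\R)$ with norm $O(\delta_S)$, and combined with the bounds $c_1\le v\le c_2$, $|u|\le c_3$, together with the uniform bounds on $\tu,\tv$, and on the smooth weight $a$ used in the $a$-contraction construction, both integrals defining $F$ are uniformly bounded. For step (iii) I would differentiate the integrand of the first term in $X$ pointwise in $x$, getting
\[
-a'\tu_x(u-\tu)-a\,\tu_{xx}(u-\tu)+a\,(\tu_x)^2,
\]
each term integrable in $x$ with bound uniform in $(t,X)$ by the same exponential decay. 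A parallel computation handles the $X$-derivative of the second integrand, since $\pa_{xx}p(\tv)=p''(\tv)(\tv_x)^2+p'(\tv)\tv_{xx}$ inherits exponential decay from $\tv_x,\tv_{xx}$. Fubini then converts these pointwise bounds into $|F(t,X_1)-F(t,X_2)|\le L|X_1-X_2|$ with $L=L(c_1,c_2,c_3,\delta_S)$. Measurability in $t$ in step (i) follows from the fact that the integrands are affine in $v(t,x),u(t,x)$, which are measurable by assumption.

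Carathéodory's theorem then delivers a unique absolutely continuous $X:[0,T]\to\R$; the a.e.\ bound $|\dot X(t)|\le C$ from step (ii) upgrades $X$ to a Lipschitz function and yields $|X(t)|\le Ct$ by direct integration. The only mildly delicate point is step (iii): because $u$ is only $L^\infty$ in $x$, one cannot transfer the $X$-derivative onto $u$ through a translation, so the computation must act exclusively on the smooth factors $a,\tu_x,\tu$ while $u-\tu$ is retained as a bounded multiplier. The integrability of the resulting terms then rests entirely on the exponential decay of the shock profile furnished by Lemma \ref{lem:shock-property}.
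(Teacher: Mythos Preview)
Your proposal is correct and is precisely the ``standard existence theorem for the ODE'' that the paper invokes in lieu of a written proof; the paper gives no further details beyond that one-line reference. Your identification of Carath\'eodory's theorem as the relevant tool (since $u,v$ are only $L^\infty$ in $t$), together with the verification that $F$ is bounded and globally Lipschitz in $X$ via the exponential decay of the profile from Lemma~\ref{lem:shock-property}, supplies exactly the argument the paper leaves to the reader.
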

	
	As the name implies, the constructed shift $X(t)$ will play an important role in the theory of $a$-contraction with shift. In the following, we use the following abbreviated notation for the shifted function. For any function $g:\bbr\to\bbr$, we define
	\[g^X(\cdot):=g(\cdot-X(t)),\quad t\ge0.\]
	
	\subsection{A priori estimate}
	We now state the a priori estimate, which is the key estimate for obtaining the time-asymptotic behavior of the NSK equations.
	
	\begin{proposition}\label{apriori-estimate}
		For a given state $(v_+,u_+)\in\bbr^+\times\bbr$, there exist positive constants $C_0,\delta_0$, and $\e_1$ such that the following holds:
		
		Suppose that $(v,u,w)$ is the solution to \eqref{NSK-w} on $[0,T]$ for some $T>0$, and $(\tv,\tu,\tw)$ is defined in \eqref{viscous-dispersive-shock-ext}. Let $X$ be the Lipschitz continuous solution to \eqref{ODE_X} with weight function $a$ defined in \eqref{a}. Assume that the shock strength $\delta_S$ is less than $\delta_0$ and that
		\begin{align*}
		&v-\tv^X\in L^\infty(0,T;H^2(\bbr)),\\
		&u-\tu^X\in L^\infty(0,T;H^1(\bbr))\cap L^2(0,T;H^2(\bbr)),
		\end{align*}
		and
		\begin{equation}\label{smallness}
		\|v-\tv^X\|_{L^\infty(0,T;H^2(\bbr))}+\|u-\tu^X\|_{L^\infty(0,T;H^1(\bbr))}\le \e_1.
		\end{equation}
		Then, for all $0\le t\le T$,
		
		\begin{equation}
		\begin{aligned}\label{a-priori-1}
		&\sup_{t\in[0,T]}\left(\norm{v-\tv^X}_{L^2(\mathbb{R})}^2 +\norm{u-\tu^X}_{H^1(\mathbb{R})}^2+\norm{w-\tw^X}_{H^1(\mathbb{R})}^2\right)+\delta_S \int_0^t | \dot{X}(s)|^2 \, d s \\ 
		&\quad +\int_0^t \left( G_1+G_3+G^S \right) \, ds +  \int_0^t \left( D_{u_1} + D_{u_2} + G_{w} + G_{w_1} + G_{w_2} \right)\, ds  \\ 
		& \le C_0 \left(\norm{v_0-\tv}_{L^2(\mathbb{R})}^2 +\norm{u_0-\tu}_{H^1(\mathbb{R})}^2+\norm{w_0-\tw}_{H^1(\mathbb{R})}^2\right),
		\end{aligned}
		\end{equation}
		
		where $C_0$ is independent of $T$, and
		\begin{equation*}
		\begin{aligned}
		&G_1:=\int_\R |a_x^X|\left|p(v)-p(\tv^X)-\frac{u-\tu^X}{2C_*}\right|^2\,dx,\\
		&G_3:=\int_\R |a^X_x||w-\tw^X|^2\,dx,\\
		&G^S:=\int_\R |\tv_x^X||u-\tu^X|^2\,dx,\\
		&D_{u_1}:=\int_\R |(u-\tu^X)_x|^2\,dx,\quad D_{u_2}:=\int_\R |(u-\tu^X)_{xx}|^2\,dx,\\
		&G_w:=\int_\R |w-\tw^X|^2\,dx,\quad G_{w_1}:=\int_{\R}|(w-\tw^X)_x|^2\,dx,\quad G_{w_2}:=\int_{\R}|(w-\tw^X)_{xx}|^2\,dx.
		\end{aligned}
		\end{equation*}
		Here, $C_*$ is a positive constant defined in \eqref{C_star}.
	\end{proposition}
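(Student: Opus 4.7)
The plan is to apply the method of $a$-contraction with shifts to the augmented system \eqref{NSK-w} with unknown $U=(v,u,w)$, whose natural relative entropy (cf.\ \eqref{entropy}) has the convenient quadratic form
\[
\eta(U|\tU):=\frac{|u-\tu|^2}{2}+Q(v|\tv)+\frac{|w-\tw|^2}{2}.
\]
I would first consider the shifted weighted functional $\int_\R a^X \eta(U|\tU^X)\,dx$ and differentiate it in time using \eqref{NSK-w} and \eqref{viscous-dispersive-shock-ext}. After repeated integration by parts, the time derivative decomposes into: a term proportional to $\dot X(t)$ (matched with the ODE \eqref{ODE_X}), good quadratic forms weighted by $a^X_x$ that produce $G_1$, $G_3$ and the shock-dissipation $G^S$, the parabolic dissipation $D_{u_1}$, and remainder terms which are either cubic in the perturbation or of order $\delta_S$ times the good terms. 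Throughout, Lemma \ref{lem : Estimate-relative} converts $p(v|\tv^X)$ and $Q(v|\tv^X)$ into $|p(v)-p(\tv^X)|^2$-type expressions, so that the $(v,u)$-part of the quadratic form matches $G_1$ with the sharp constant $C_*$ from \eqref{C_star}.

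The critical point at the $L^2$-level is that $\dot X(t)$ in \eqref{ODE_X} is tuned exactly to cancel the linear-in-perturbation resonant terms produced by $\tu_x^X$ and $\pa_x p(\tv^X)$, at the cost of a $\delta_S |\dot X|^2$ contribution that is absorbed on the left-hand side after a completion of square. Positivity of the resulting weighted quadratic form then follows from a Poincar\'e-type inequality across the shock profile, exactly as in \cite{KV21}, combined with the exponential decay bounds on $\tv_x, \tv_{xx}$ from Lemma \ref{lem:shock-property}. The $O(1)$-estimates \eqref{shock_speed_est}--\eqref{shock_speed_est-2} allow one to replace shock-dependent coefficients by the constants $\sigma_\ell, \alpha_\ell$ modulo errors of order $\delta_S$, which are absorbed into $G^S$. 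This yields at once the $L^2$-norms of $(v,u,w)-(\tv^X,\tu^X,\tw^X)$, together with the dissipation $G_1, G_3, G^S, D_{u_1}$ and (via the augmented entropy) $G_w$.

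The higher-order pieces $D_{u_2}, G_{w_1}, G_{w_2}$ are then obtained by standard energy estimates on differentiated versions of \eqref{NSK-w}: differentiating the $u$-equation and testing with $(u-\tu^X)_x$ produces $D_{u_2}$ along with cross terms controlled by $G_{w_1}$; differentiating the $w$-equation once or twice and pairing with $(w-\tw^X)_x$ or $(w-\tw^X)_{xx}$ produces $G_{w_1}$ and $G_{w_2}$ after using the continuity equation. Crucially, since $w=-v_x/v^{5/2}$, the $H^1$-bound on $w-\tw^X$ translates directly into the $H^2$-bound on $v-\tv^X$, so no separate high-order estimate for $v$ is needed. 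The smallness assumption \eqref{smallness} together with Sobolev embedding is used at every step to absorb cubic remainders.

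The main difficulty, and the very reason for introducing the auxiliary variable $w$, lies in the third-order capillary term $(-v_{xx}/v^5+5v_x^2/(2v^6))_x$ in the momentum equation of \eqref{eq:NSK}. In a naive energy identity this term produces contributions that are neither dissipative nor controllable by a classical relative entropy. Rewriting the Korteweg term as $(w_x/v^{5/2})_x$ and pairing with the evolution $w_t=(-u_x/v^{5/2})_x$ forces the troublesome interactions between $u$ and $w$ to collapse into exact differentials together with the perfect square $|w-\tw^X|^2$, giving rise precisely to $G_3$ and $G_w$. Keeping track of these cancellations in the weighted, shifted setting, where $a^X$ does not commute with the spatial operators, will be the delicate core of the argument and depends essentially on the decay estimates of Lemma \ref{lem:shock-property} for $\tv_{xx}$. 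I expect the matching of cross terms arising from the coupling $u \leftrightarrow w$ through the factor $v^{-5/2}$, linearized around $\tv^X$, to be the most error-prone step of the computation.
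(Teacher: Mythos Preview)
Your outline of the $L^2$-level argument is essentially correct and matches the paper: weighted relative entropy for the augmented unknown $U=(v,u,w)$, the shift ODE \eqref{ODE_X} absorbing the resonant terms, the maximization in $p(v)-p(\tv^X)$ producing $G_1$ with the constant $C_*$, and the Poincar\'e-type inequality from \cite{KV21} yielding $G^S$. One correction already here: the weighted entropy computation gives only the \emph{localized} dissipation $G_3=\int |a^X_x||w-\tw^X|^2$, together with $\sup_t\|w-\tw^X\|_{L^2}^2$ from the entropy itself. It does \emph{not} produce the time-integrated $\int_0^t G_w\,ds$; in fact the $L^2$ estimate (the paper's Lemma~\ref{Main Lemma}) leaves $\sqrt{\delta_S}\int_0^t\|(w-\tw^X)_x\|^2_{L^2}\,ds$ as an uncontrolled remainder on the right-hand side, to be closed later.

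The genuine gap is in your higher-order strategy. Testing the $w$-equation $\omega_t=-(\psi_x/v^{5/2}+\cdots)_x$ against $\omega$, $\omega_x$, or $\omega_{xx}$ produces no coercive term in $\omega$: after integration by parts you only get cross terms like $\int (\psi_x/v^{5/2})\omega_x$, which are not sign-definite. The $w$-equation is a transport/skew equation with no intrinsic dissipation. The paper instead obtains $G_w$ and $D_{w_1}$ by \emph{cross-multiplying}: the momentum equation is tested with $\omega$ and the $w$-equation with $\psi$, so that the time derivatives combine into $(\psi\omega)_t$. The crucial observation you are missing is that the pressure gradient can be rewritten via $v_x=-v^{5/2}w$ as
\[
(p(v)-p(\tv))_x = -p'(v)v^{5/2}\omega - \tw\big(p'(v)v^{5/2}-p'(\tv)\tv^{5/2}\big),
\]
so that pairing the momentum equation with $\omega$ extracts the strictly positive term $-\int_\R p'(v)v^{5/2}|\omega|^2\,dx\sim G_w$, while the Korteweg term $(\omega_x/v^{5/2})_x$ paired with $\omega$ gives $\int_\R |\omega_x|^2/v^{5/2}\sim D_{w_1}$. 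The same trick one derivative higher (pairing $\psi_{tx}$ with $\omega_x$ and $\omega_{tx}$ with $\psi_x$) yields $D_{w_1}$ and $D_{w_2}$. Because the resulting functionals $\int\psi\omega$ and $\int\psi_x\omega_x$ are indefinite, closing the estimate requires a careful weighted combination of three separate lemmas in a specific order; this layering, not merely ``standard energy estimates on differentiated versions'', is where the argument actually lives.
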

	
	\begin{remark}
		By the small perturbation of $v$ in $H^1$, and the definition of $w$-variable, $w$ is equivalent to the derivative of $v$. Therefore, the estimate \eqref{a-priori-1} is equivalent to the following another formulation for the a priori estimate:
		\begin{equation}
		\begin{aligned}\label{a-priori-2}
		& \sup_{t\in[0,T]}\left(\norm{v-\tv^X}_{H^2 (\mathbb{R})}^2 +\norm{u-\tu^X}_{H^1(\mathbb{R})}^2\right)+\delta_S \int_0^t | \dot{X}(s)|^2 \, d s \\ 
		&\quad +\int_0^t \left( G_1+G_3+G^S \right) \, ds +  \int_0^t \left( D_{u_1} + D_{u_2} + G_{w} + G_{w_1} + G_{w_2} \right)\, ds  \\ 
		& \le C_0 \left(\norm{v_0-\tv}_{H^2 (\mathbb{R})}^2 +\norm{u_0-\tu}_{H^1(\mathbb{R})}^2\right),
		\end{aligned}
		\end{equation}
		where $(v,u)$ is the solution to the original NSK equations \eqref{eq:NSK} and $(\tv,\tu)$ is defined in \eqref{viscous-dispersive-shock}
	\end{remark}
	
	\subsection{Global existence of perturbed solution}
	
	Using the a priori estimate \eqref{a-priori-1} and the equivalent form \eqref{a-priori-2}, we can extend the local solution obtained from Proposition \ref{prop:local} to the global one by using the standard continuation argument. We first choose smooth functions $\underline{v}$ and $\underline{u}$ that satisfy
	\begin{equation}\label{ubarvbar}
	\sum_{\pm}\left(\|\underline{v}-v_\pm\|_{L^2(\R_\pm)}+\|\underline{u}-u_\pm\|_{L^2(\R_\pm)}\right)+\|\pa_x\underline{v}\|_{H^1(\R)}+\|\pa_x\underline{u}\|_{H^1(\R)}\le C\delta_S.
	\end{equation}
	
	Then, we use the estimates on the shock wave \eqref{shock-property} to obtain
	\begin{equation}
	\begin{aligned}\label{est-init}
	&\norm{\underline{v}-\tv^X}_{H^2(\mathbb{R})}+\norm{\underline{u}-\tu^X}_{H^1(\mathbb{R})}\\
	&\le \sum_{\pm}\left(\norm{\underline{v}-v_{\pm}}_{L^2(\R_\pm)}+\norm{\underline{u}-u_{\pm}}_{L^2(\R_\pm)}\right) + \|\tv^X-v_+\|_{L^2(\R_+)}+\|\tv^X-v_-\|_{L^2(\R_-)}\\
	&\quad +\|\pa_x \underline{v}\|_{H^1(\mathbb{R})} + \|\tv^X_x\|_{H^1(\mathbb{R})}+\|\tu^X-u_+\|_{L^2(\R_+)}+\|\tu^X-u_-\|_{L^2(\R_-)}+ \|\pa_x \underline{u}\|_{H^1(\mathbb{R})}+\|\tu^X_x\|_{L^2(\mathbb{R})}\\
	&\le C\sqrt{\delta_S} 
	\end{aligned}
	\end{equation}
	Now, for sufficiently small $\delta_S$ we choose $\e_0$ as 
	\[\e_0 < \frac{\e_1}{3}-C\sqrt{\delta_S}.\]
	Consider any initial data $(v_0,u_0)$ such that
	\[\sum_{\pm}\left(\|v_0-v_{\pm}\|_{L^2(\R_\pm)}+\|u_0-u_\pm\|_{L^2(\R_\pm)}\right)+\|v_{0x}\|_{H^1}+\|u_{0x}\|_{L^2}<\e_0.\]
	Then, we use \eqref{ubarvbar} to obtain
	\begin{align*}
	&\|v_0-\underline{v}\|_{H^2(\mathbb{R})}+\|u_0-\underline{u}\|_{H^1(\mathbb{R})}\\
	&\le\sum_{\pm}\left(\|v_0-v_\pm\|_{L^2(\R_\pm)}+\|u_0-u_{\pm}\|_{L^2(\R_\pm)}+\|\underline{v}-v_\pm\|_{L^2(\R_\pm)}+\|\underline{u}-u_{\pm}\|_{L^2(\R_\pm)}\right)\\
	&\quad + \|v_{0x}\|_{H^1(\mathbb{R})}+\|u_{0x}\|_{L^2(\mathbb{R})}+\|\underline{v}_x\|_{H^1(\mathbb{R})}+\|\underline{u}_x\|_{L^2(\mathbb{R})}\\
	&\le \e_0 +C\sqrt{\delta_S}< \frac{\e_1}{3}.
	\end{align*}
	From the smallness of $\e_1$ and Sobolev embedding, we have
	\[\frac{v_-}{2}\le v_0(x)\le 2v_+,\quad x\in\R\]
	and by the local existence result in Proposition \ref{prop:local}, there exists $T_0>0$ such that
	\begin{equation}\label{est-local-1}
	\|v-\underline{v}\|_{L^\infty(0,T_0;H^2(\mathbb{R}))}+\|u-\underline{u}\|_{L^\infty(0,T_0;H^1(\mathbb{R}))}\le \frac{\e_1}{2}
	\end{equation}
	and
	\[\frac{v_-}{3}\le v(t,x)\le 3v_+,\quad (t,x)\in[0,T_0]\times\R.\]
	On the other hand, we estimate the difference between $(\underline{v},\underline{u})$ and $(\tv^X,\tu^X)$ by using similar estimate as in \eqref{est-init} and \eqref{shock-property} as 
	\begin{align*}
	&\|\underline{v}-\tv^X(t,\cdot)\|_{H^2(\mathbb{R})}+\|\underline{u}-\tu^X(t,\cdot)\|_{H^1(\mathbb{R})}\\
	&\le \sum_{\pm}\left(\|\underline{v}-v_{\pm}\|_{L^2(\R_\pm)}+\|\underline{u}-u_{\pm}\|_{L^2(\R_\pm)}+\|\tv^X-v_\pm\|_{L^2(\R_\pm)}+\|\tu^X-u_\pm\|_{L^2(\R_\pm)}\right)\\
	&\quad +\|\tv^X_x\|_{H^1(\mathbb{R})}+\|\tu^X_x\|_{L^2(\mathbb{R})}+\|\underline{v}_x\|_{H^1(\mathbb{R})}+\|\underline{u}_x\|_{L^2(\mathbb{R})}\\
	&\le C\sqrt{\delta_S}(1+\sqrt{|X(t)|})\le C\sqrt{\delta_S}(1+\sqrt{t}).
	\end{align*}
	Taking $T_1\in(0,T_0)$ small enough so that $C\sqrt{\delta_S}(1+\sqrt{T_1})\le \frac{\e_1}{2}$, we have
	\begin{equation}\label{est-local-2}
	\|\underline{v}-\tv^X\|_{L^\infty(0,T_1;H^2(\mathbb{R}))}+\|\underline{u}-\tu^X\|_{L^\infty(0,T_1;H^1(\mathbb{R}))}\le\frac{\e_1}{2}.
	\end{equation}
	Combining \eqref{est-local-1} and \eqref{est-local-2} yields
	\[\|v-\tv^X\|_{L^\infty(0,T_1;H^2(\mathbb{R}))}+\|u-\tu^X\|_{L^\infty(0,T_1;H^1(\mathbb{R}))}\le\e_1.\]
	Then, the a priori estimate \eqref{a-priori-2} implies that $T_1$ can be extended to $+\infty$, and the global existence is proved. In particular, we have
	\begin{equation}
	\begin{aligned}\label{est-infinite}
	& \sup_{t>0}\left(\norm{v-\tv^X}_{H^2 (\mathbb{R})}^2 +\norm{u-\tu^X}_{H^1(\mathbb{R})}^2\right)+\delta_S \int_0^\infty | \dot{X}(t)|^2 \, d t \\ 
	&\quad +\int_0^\infty \left( G_1+G_3+G^S \right) \, dt+  \int_0^\infty \left( D_{u_1} + D_{u_2} + G_{w} + G_{w_1} + G_{w_2} \right)\, dt  \\ 
	& \le C_0 \left(\norm{v_0-\tv}_{H^2 (\mathbb{R})}^2 +\norm{u_0-\tu}_{H^1(\mathbb{R})}^2\right)<\infty
	\end{aligned}
	\end{equation}
	and, for $t>0$,
	\begin{equation} \label{eq: X bound}
	|\dot{X}(t)|\le C_0 \left(\|(v-\tv^X)(t,\cdot)\|_{L^\infty(\bbr)}+\|(u-\tu^X)(t,\cdot)\|_{L^\infty(\bbr)}\right).
	\end{equation}
	
	\subsection{Time-asymptotic behavior}
	We are now ready to prove the time-asymptotic behavior of the perturbation. We first define
	\[g(t):=\|(v-\tv^X)_x\|_{L^2(\mathbb{R})}^2+\|(u-\tu^X)_x\|_{L^2(\mathbb{R})}^2.\]
	We will show that $g\in W^{1,1}(\R_+)$ which implies $\lim_{t\to\infty}g(t)=0$. Then, the Gagliardo-Nirenberg interpolation inequality and the uniform bound estimate \eqref{est-infinite} implies
	\begin{equation} \label{eq: v,u limit}
	\lim_{t\to\infty}\left(\|v-\tv^X\|_{L^\infty(\mathbb{R})}+\|u-\tu^X\|_{L^\infty(\mathbb{R})}\right)=0.
	\end{equation}
	Furthermore, \eqref{eq: X bound} and \eqref{eq: v,u limit} imply that
	\[ \lim_{t \to \infty} |\dot{X}(t)| \le C_0 \lim_{t \to \infty} \left( \norm{(v-\tv^X)(t,\cdot)}_{L^\infty(\mathbb{R})} + \norm{(u-\tu^X)(t,\cdot)}_{L^\infty(\mathbb{R})} \right)=0.\]
	Therefore, it remains to show that $g\in W^{1,1}(\R_+)$.\\
	
	\noindent (1) $g\in L^1(\bbr_+)$: We use the definition of $w$ variable to observe that
	\begin{align*}
	|(v-\tv^X)_x| &= |wv^{5/2}-\tw^X(\tv^X)^{5/2}|\le v^{5/2}|w-\tw^X|+|\tw^X||v^{5/2}-(\tv^X)^{5/2}|\\
	&\le C|w-\tw^X| + C|\tw^X||v-\tv^X|\le C|w-\tw^X|+C|\tv_x^X||v-\tv^X|\\
	&\le C|w-\tw^X|+C|\tv_x^X||p(v)-p(\tv^X)|\\
	&\le C|w-\tw^X|+C|\tv_x^X|\left|p(v)-p(\tv^X)-\frac{u-\tu^X}{2C^*}\right|+C|\tv_x^X||u-\tu^X|.
	\end{align*}
	Therefore, we obtain
	
	\begin{align*}
	\int_0^\infty |g(t)|\,dt&=\int_0^\infty\int_{\R} |(v-\tv^X)_x|^2+|(u-\tu^X)_x|^2\,dxdt\\
	&\le C\int_0^\infty\int_\R |w-\tw^X|^2+ |\tv_x^X|^2\left|p(v)-p(\tv^X)-\frac{u-\tu^X}{2C^*}\right|^2\\
	&\hspace{2cm} +|\tv^X_x|^2|u-\tu^X|^2+|(u-\tu^X)_x|^2\,dxdt\\
	&\le C\int_0^\infty (G_w + G_1+G^S+ D_{u_1})\,d t<\infty,
	\end{align*}
	where we used \eqref{est-infinite} in the last inequality. This implies $g\in L^1(\R_+)$.\\
	
	\noindent (2) $g'\in L^1(\R_+)$: We combine the system \eqref{NSK-w} and \eqref{viscous-dispersive-shock-ext} to obtain
	\begin{align}
	\begin{aligned}\label{eq:diff}
	&(v-\tv^X)_t - (u-\tu^X)_x = \dot{X}(t)\tv_x^X,\\
	&(u-\tu^X)_t + (p(v)-p(\tv^X))_x = \left(\frac{u_x}{v}-\frac{\tu_x^X}{\tv^X}\right)_x+\left(\frac{w_x}{v^{5/2}}-\frac{\tw_x^X}{(\tv^X)^{5/2}}\right)_x+\dot{X}(t)\tu_x^X,\\
	&(w-\tw^X)_t = -\left(\frac{u_x}{v^{5/2}}-\frac{\tu_x^X}{(\tv^X)^{5/2}}\right)_x +\dot{X}(t) \tw_x^X.
	\end{aligned}
	\end{align}
	Then, using the equation \eqref{eq:diff}, we estimate the time-integration of $g'$ as
	
	\begin{equation}
	\begin{aligned}\label{est:gprime}
	\int_0^\infty&|g'(t)|\,dt\\
	&=\int_0^\infty 2\left|\int_\R (v-\tv^X)_x(v-\tv^X)_{xt}\,dx+\int_\R (u-\tu^X)_x(u-\tu^X)_{xt}\,dx\right|\,dt\\
	&\le 2\int_0^\infty \left|\int_\R (v-\tv^X)_x\left((u-\tu^X)_{xx}+\dot{X}(t)\tv^X_{xx}\right)\,dx\right|\,dt\\
	&\quad + 2\int_0^\infty \Bigg|\int_\R (u-\tu^X)_x\Bigg(-(p(v)-p(\tv^X))_{xx}+\left(\frac{u_x}{v}-\frac{\tu_x^X}{\tv^X}\right)_{xx}\\
	&\hspace{4cm}+\left(\frac{w_x}{v^{5/2}}-\frac{\tw_x^X}{(\tv^X)^{5/2}}\right)_{xx}+\dot{X}(t)\tu_{xx}^X\Bigg)\Bigg|\,dt\\
	&=2\int_0^\infty \left|\int_\R (v-\tv^X)_x\left((u-\tu^X)_{xx}+\dot{X}(t)\tv^X_{xx}\right)\,dx\right|\,dt\\
	&\quad + 2\int_0^\infty \Bigg|\int_\R (u-\tu^X)_{xx}\Bigg((p(v)-p(\tv^X))_{x}-\left(\frac{u_x}{v}-\frac{\tu_x^X}{\tv^X}\right)_{x}-\left(\frac{w_x}{v^{5/2}}-\frac{\tw_x^X}{(\tv^X)^{5/2}}\right)_{x}\Bigg)\\
	&\hspace{3cm} + \int_\R (u-\tu^X)_x\dot{X}(t)\tu^X_{xx}\Bigg|\,dt\\
	&\le C\int_0^\infty (G_w+G_1+G^S+D_{u_1}+D_{u_2}+|\dot{X}(t)|^2)\,dt\\
	&\quad + C\int_0^\infty \int_\R \left|\left(\frac{u_x}{v}-\frac{\tu_x^X}{\tv^X}\right)_{x}\right|^2+\left|\left(\frac{w_x}{v^{5/2}}-\frac{\tw_x^X}{(\tv^X)^{5/2}}\right)_{x}\right|^2\,dxdt.
	\end{aligned}
	\end{equation}
	
	Since the first term in the right-hand side of \eqref{est:gprime} can be bounded by \eqref{est-infinite}, we only need to estimate the last two terms. The first one of the last two terms can be estimated as in the usual Navier-Stokes equations. However, as we obtain a uniform $H^2$-norm of $v$ perturbation, we can directly control the $L^\infty$-norm of $(v-\tv^X)_x$, which makes the estimate simpler. Precisely, we obtain
	\begin{align*}
	\int_0^\infty& \int_\R \left|\left(\frac{u_x}{v}-\frac{\tu^X_x}{\tv^X}\right)_x\right|^2\,dxdt\\
	&=\int_0^\infty\int_\R \Bigg|\frac{1}{v}(u-\tu^X)_{xx}+\tu^X_{xx}\left(\frac{1}{v}-\frac{1}{\tv^X}\right)-\frac{1}{v^2}(v-\tv^X)_{x}(u-\tu^X)_x\\
	&\hspace{2cm}-\frac{\tv^X_x}{v^2}(u-\tu^X)_x-\frac{\tu^X_x}{v^2}(v-\tv^X)_x-\tv^X_x\tu^X_x\left(\frac{1}{v^2}-\frac{1}{(\tv^X)^2}\right)\Bigg|^2\,dxdt\\
	&\le C\int_0^\infty\int_\R \Bigg(|(u-\tu^X)_{xx}|^2 + |\tu_x^X|^2|v-\tv^X|^2+|(u-\tu^X)_x|^2|(v-\tv^X)_x|^2\\
	&\hspace{3cm}+|\tv_x^X|^2|(u-\tu^X)_x|^2+|\tu_x^X|^2|(v-\tv^X)_x|^2+|\tv^X_x|^2|\tu^X_x|^2|v-\tv^X|^2\Bigg)\,dxdt,
	\end{align*}
	and consequently,
	\begin{align*}
	\int_0^\infty& \int_\R \left|\left(\frac{u_x}{v}-\frac{\tu^X_x}{\tv^X}\right)_x\right|^2\,dxdt\\
	&\le C\int_0^\infty\left(G_w+G_1+G^S+D_{u_1}+D_{u_2}\right)\,dt\\
	&\quad + C\|(v-\tv^X)_x\|^2_{L^\infty((0,\infty)\times\R)}\int_0^\infty\int_\R |(u-\tu^X)_x|^2\,dxdt\\
	&\le C\int_0^\infty\left(G_w+G_1+G^S+D_{u_1}+D_{u_2}\right)\,dt+ C\int_0^\infty D_{u_1}\,dt<+\infty.
	\end{align*}
	
	Finally, we estimate the last term as
	\begin{align*}
	\int_0^\infty&\int_\R \left|\left(\frac{w_x}{v^{5/2}}-\frac{\tw^X_x}{(\tv^X)^{5/2}}\right)_x\right|^2\,dxdt\\
	&=\int_0^\infty\int_\R\Bigg|\frac{(w-\tw^X)_{xx}}{v^{5/2}}+\tw^X_{xx}\left(\frac{1}{v^{5/2}}-\frac{1}{(\tv^X)^{5/2}}\right)-\frac{5}{2}\frac{1}{v^{7/2}}(v-\tv^X)_x(w-\tw^X)_x\\
	&\hspace{2cm}-\frac{5}{2}\frac{\tv^X_x}{v^{7/2}}(w-\tw^X)_x-\frac{5}{2} \frac{\tw^X_x}{v^{7/2}}(v-\tv^X)_x-\frac52\tv^X_x\tw^X_x\left(\frac{1}{v^{7/2}}-\frac{1}{(\tv^X)^{7/2}}\right)\Bigg|^2\,dxdt\\
	&\le C\int_0^\infty\int_\R \Bigg(|(w-\tw^X)_{xx}|^2+|\tw^X_{xx}|^2|v-\tv^X|^2+|(v-\tv^X)_x|^2|(w-\tw^X)_x|^2 \\
	&\hspace{3cm}+ |\tw^X_x|^2|v-\tv^X|^2 + |\tv^X_x|^2|(w-\tw^X)_x|^2 + |\tv^X_x|^2|\tw^X_x|^2|v-\tv^X|^2\Bigg)\,dxdt.
	\end{align*}
	
	However, using the definition of $w$ variable $\tw = -\frac{\tv_x}{\tv^{5/2}}$, we derive
	\[\tw_x = -\frac{\tv_{xx}}{\tv^{5/2}}+\frac{5}{2}\frac{(\tv_x)^2}{\tv^{7/2}},\quad \tw_{xx} = -\frac{\tv_{xxx}}{\tv^{5/2}}+\frac{15}{2}\frac{\tv_x\tv_{xx}}{\tv^{7/2}}-\frac{35}{4}\frac{(\tv_x)^2}{\tv^{9/2}},\]
	which, together with \eqref{shock-property}, implies
	\[|\tw_x|\le C|\tv_x|,\quad |\tw_{xx}|\le C|\tv_x|.\]
	Therefore, we obtain
	\begin{align*}
	\int_0^\infty&\int_\R\left|\left(\frac{w_x}{v^{5/2}}-\frac{\tw^X_x}{(\tv^X)^{5/2}}\right)_x\right|^2\,dxdt\\
	&\le C\int_0^\infty \left(G_1+G^S+D_{w_1}+D_{w_2}\right) \,dt+ C\int_0^\infty |(v-\tv^X)_x|^2|(w-\tw^X)_x|^2\,dxdt\\
	&\le C\int_0^\infty \left(G_1+G^S+D_{w_1}+D_{w_2}\right) \,dt+ C\|v-\tv^X\|_{L^\infty(0,\infty;H^2(\R))}\int_0^\infty D_{w_1}\,dt<+\infty.
	\end{align*}
	which proves $g'\in L^1(\R_+)$. Thus, we have shown that $g\in W^{1,1}(\R_+)$. This completes the proof of the asymptotic behavior of the NSK equations. Therefore, once we have the a priori estimate in Proposition \ref{apriori-estimate}, we prove the time asymptotic behavior of the NSK equations. \\
	
	In the following sections, we will prove Proposition \ref{apriori-estimate} by using the theory of $a$-contraction with shift. In Section \ref{sec:rel_ent}, we provide the estimate on the relative entropy between the solution to the NSK equation and the viscous dispersive shock with shifts, which gives the $L^2$-estimate for $(v,u,w)$ perturbations. Then, we obtain $H^1$-estimate for $(v,u,w)$ in Section \ref{sec:high-order}.
	
	\section{Estimate on the weighted relative entropy with the shift}\label{sec:rel_ent}
	\setcounter{equation}{0}
	In this section, we estimate the $L^2$-perturbation of a solution to the NSK equations \eqref{NSK-w} from the viscous-dispersive shock profile \eqref{viscous-dispersive-shock-ext} by using the method of $a$-contraction. 
	The main goal of this section is to verify the following control on the $L^2$-perturbation between the solution $(v,u,w)$ to \eqref{NSK-w} and the viscous-dispersive shock $(\widetilde{v},\widetilde{u},\widetilde{w})$ defined as in  \eqref{viscous-dispersive-shock-ext}.
	\begin{lemma} \label{Main Lemma} There exists a positive constant $C$ such that for all $t \in [0,T],$
		\begin{equation} \label{energy-est}
		\begin{aligned}
		&\int_\mathbb{R} \big( |u-\tu^X|^2 +|v-\tv^X|^2  + |w-\tw^X|^2 \big) \, d x + \int_0^t \left( \delta_S | \dot{X}|^2 + G_1+G_3+G^S+D \right) \, ds \\ 
		&\quad \le C \int_\mathbb{R} \big( |u_0-\tu|^2 +|v_0- \tv|^2 + |w_0-\tw|^2 \big) \, dx + C\sqrt{\delta_S} \int_0^t \| (w-\tw^X)_x\|_{L^2(\mathbb{R})}^2 \, ds,
		\end{aligned}
		\end{equation}
		where
		\begin{align*}
		G_1&= \int_\mathbb{R} |a_x^X| \left| p(v)-p(\tv^X) - \frac{u-\tu^X}{2C_*} \right|^2 \, dx,\\
		G_3 &= \int_\mathbb{R} |a_x^X| |w-\tw^X|^2 \, dx,\\
		G^S &= \int_\mathbb{R} |\tv_x^X| |u-\tu^X|^2 \, dx,\\
		D_1 &= \int_\mathbb{R} |\partial_x(u-\tu^X)|^2 \, dx.
		\end{align*}
		
	\end{lemma}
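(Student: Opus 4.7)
The plan is to run the $a$-contraction-with-shift machinery on the augmented system \eqref{NSK-w}, whose natural convex entropy $\eta(U)=|u|^2/2+Q(v)+|w|^2/2$ makes the relative functional $\eta(U|\tU^X)$ pointwise comparable to $|v-\tv^X|^2+|u-\tu^X|^2+|w-\tw^X|^2$ via Lemma \ref{lem : Estimate-relative}. I would choose a weight of the form $a(\xi)=1+\lambda(p(v_+)-p(\tv(\xi)))/\delta_S$ with $\lambda$ a small fixed constant, so that $a$ is bounded and monotone, $a_x^X\asymp \lambda\,\tv_x^X/\delta_S$, and localized quantities weighted by $a_x^X$ yield exactly $G_1, G_3$ while those weighted by $\tv_x^X$ yield $G^S$.

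Then I would differentiate $\mathcal{E}(t):=\int_\R a^X\eta(U|\tU^X)\,dx$ using \eqref{NSK-w} and \eqref{viscous-dispersive-shock-ext}. After integration by parts the derivative splits as
\begin{equation*}
\frac{d}{dt}\mathcal{E}(t)=\dot{X}(t)\,Y(U)+\mathcal{F}(U)-\mathcal{D}(U)+\mathcal{R}(U),
\end{equation*}
where $Y$ is linear in $U-\tU^X$ and is precisely what gets chosen in \eqref{ODE_X} as the right-hand side of the shift ODE; $\mathcal{F}$ collects the hyperbolic-flux contributions, including the leading $-\sigma\int a_x^X\eta(U|\tU^X)\,dx$ and the source terms driven by $\tv_x^X$; $\mathcal{D}$ contains the viscous dissipation $\int a^X|(u-\tu^X)_x|^2/v\,dx$ plus the capillarity cross-coupling between the $u$- and $w$-equations; and $\mathcal{R}$ is a cubic remainder tamed by $\delta_S$ and the a priori smallness \eqref{smallness}.

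The analytic heart is the sharp quadratic expansion of $\mathcal{F}$ à la \cite{KV-Inven}: substituting the bounds of Lemma \ref{lem : Estimate-relative} to replace $p(v|\tv^X),Q(v|\tv^X)$ by $(p(v)-p(\tv^X))^2$ up to higher-order corrections, combining with the Young inequality applied to $\dot{X}\,Y$ using the specific constant $M=5\sigma_\ell^3\alpha_\ell/4$, and invoking the $O(1)$-identities \eqref{O(1) constnats}--\eqref{shock_speed_est-2}, one eliminates the bad $v$--$u$ cross term and is left with $-c\delta_S|\dot{X}|^2-cG_1-cG^S$. The new ingredient compared to the Navier--Stokes case is the $w$-equation: $-\sigma\tw_\xi=(-\tu_\xi/\tv^{5/2})_\xi$ produces, after the same expansion, the term $-cG_3$ out of the $a_x^X|w-\tw^X|^2$ piece, while the two Korteweg cross-couplings $\int a^X(w_x/v^{5/2}-\tw^X_x/(\tv^X)^{5/2})(u-\tu^X)\,dx$ and $-\int a^X(u_x/v^{5/2}-\tu^X_x/(\tv^X)^{5/2})(w-\tw^X)\,dx$ cancel at the leading order thanks to the skew symmetry built into \eqref{NSK-w}, leaving only error terms proportional to $\|\tv_x^X\|_{L^\infty}$ or to $a_x^X$ and hence absorbable into $G^S, G_1, G_3$.

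The hard part will be the residual cross term that survives the skew cancellation: because $a_x^X$ does not fully absorb the gradient $(w-\tw^X)_x$ appearing after integration by parts on the capillarity terms, one is forced to accept a bound of the shape $C\sqrt{\delta_S}\int_0^t\|(w-\tw^X)_x\|_{L^2}^2\,ds$ on the right-hand side, which is exactly the last term in \eqref{energy-est}. This is acceptable because the higher-order estimates in Section \ref{sec:high-order} will independently produce $G_{w_1}=\int|(w-\tw^X)_x|^2\,dx$ in the dissipation, so the $\sqrt{\delta_S}$ smallness lets it be absorbed when the two estimates are combined. After all these bookkeeping steps, integrating in time from $0$ to $t$ and invoking the coercivity of $\eta(U|\tU^X)$ delivers the stated inequality \eqref{energy-est}.
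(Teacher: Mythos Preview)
Your outline captures the relative-entropy computation, the skew cancellation of the Korteweg cross terms, and the origin of the $\sqrt{\delta_S}\int_0^t\|(w-\tw^X)_x\|_{L^2}^2$ remainder correctly. However, there is a genuine gap in the core step: you write that after the quadratic expansion and ``Young's inequality applied to $\dot X\,Y$'' one is ``left with $-c\delta_S|\dot X|^2-cG_1-cG^S$.'' This is not how the sign works out, and Young's inequality alone cannot produce $-cG^S$.

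Concretely, once you complete the square to extract $G_1$ (the paper's Lemma \ref{lem:quad}), the residual is $\mathcal{B}_1=\tfrac{1}{4C_*}\int a_x^X|u-\tu^X|^2\,dx$ with $C_*=\tfrac{1}{2\sigma_\ell}+O(\sqrt{\delta_S})$, while the good hyperbolic term is $\mathcal{G}_2=\tfrac{\sigma}{2}\int a_x^X|u-\tu^X|^2\,dx$. Since $\tfrac{1}{4C_*}-\tfrac{\sigma}{2}=\sigma_\ell^3\alpha_\ell\sqrt{\delta_S}+O(\delta_S)>0$, the net contribution $\mathcal{B}_1-\mathcal{G}_2$ is a \emph{positive} multiple of $G^S$, not a negative one. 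The paper closes this by the Poincar\'e-type inequality (Lemma \ref{lem: KV inequality}): after the change of variables $y=(u_--\tu^X)/\delta_S$ and writing $f(y)=u-\tu^X$, the diffusion obeys $\mathcal{D}\gtrsim \delta_S\int_0^1 y(1-y)|f'|^2\,dy$, and Poincar\'e converts this into control of $\int_0^1 f^2\,dy$ up to the mean $\bar f=\int_0^1 f\,dy$. The mean is then absorbed by $-\tfrac{\delta_S}{2M}|\dot X|^2$ because the shift ODE is designed so that $\dot X\approx 2M\bar f$; this is exactly why $M=\tfrac{5}{4}\sigma_\ell^3\alpha_\ell$. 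Without this Poincar\'e step the argument does not close. Relatedly, the weight in the paper is $a=1+(u_--\tu)/\sqrt{\delta_S}$, so $|a_x^X|\sim |\tv_x^X|/\sqrt{\delta_S}$; your proposed scaling $\lambda/\delta_S$ with fixed $\lambda$ would alter all the leading constants in the $\mathcal{B}_1$--$\mathcal{G}_2$--$\mathcal{D}$ balance and is not compatible with the computation above.
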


	\subsection{Construction of weight function}
	Instead of directly estimating the relative entropy as in \eqref{energy-est}, we will estimate the weighted relative entropy. To this end, we first construct the weight function $a=a(t,x)$ as
	\begin{equation} \label{a}
	a(t,x):=1+\frac{u_--\widetilde{u}(x-\sigma t)}{\sqrt{\delta_S}},
	\end{equation}
	where $\delta_S:=|u_- - u_+| $ denotes the shock strength. It follows from the definition of the weight function $a$ that $1 \le a \le 1+\sqrt{\delta_S}<\frac{3}{2}$ and
	\begin{equation}\label{a_x}
	\partial_x a=-\frac{\partial_x \tu}{\sqrt{\delta_S}}=\frac{\sigma\pa_x \tv}{\sqrt{\delta_S}}>0,\quad \mbox{and therefore},\quad |a_x|\sim \frac{|\pa_x\tv|}{\sqrt{\delta_S}}
	\end{equation}
	where we used \eqref{viscous-dispersive-shock-ext}$_1$.
	
	\subsection{Relative entropy method}
	To prove Lemma \ref{Main Lemma}, we basically rely on the method of $a$-contraction with shifts, which uses relative entropy. Therefore, we rewrite the NSK system \eqref{NSK-w} in the following abstract form:
	\begin{equation}\label{eq:NS-abs}
	\partial_t U +\partial_x A(U) = \partial_x (M(U) \partial_x D \eta (U)),
	\end{equation}
	where 
	\[U:=\begin{pmatrix}
	v\\u\\w
	\end{pmatrix},\quad 
	A(U):=\begin{pmatrix}
	-u\\  p(v) \\0
	\end{pmatrix},\quad 
	D\eta(U):=\begin{pmatrix}
	-p(v)\\  u \\w,
	\end{pmatrix},\]
	and 
	\begin{equation*}
	M(U):=
	\begin{pmatrix}
	0 &0 &0\\
	0 &v^{-1} &v^{-5/2}\\
	0 &-v^{-5/2} &0
	\end{pmatrix}.
	\end{equation*}
	Here, we use the convex entropy $\eta$ defined in \eqref{entropy} and $D\eta$ denotes the gradient of $\eta$ with respect to the variables $U=(v,u,w)$. Similarly, the viscous-dispersive shock profile $\widetilde{U}=(\tv,\tu,\tw)$ satisfies
	\[-\sigma \widetilde{U}_\xi+(A(\widetilde{U}))_\xi=(M(\tU)(D\eta(\tU))_\xi)_\xi.\]
	
	Now, consider the viscous-dispersive wave with the shift $\widetilde{U}^X$ as
	\begin{equation} \label{shifted underlying wave}
	\widetilde{U}^X(t,x):=\widetilde{U}(x-\sigma t-X(t)),
	\end{equation}
	where the shift $X(t)$ is a Lipschitz continuous function determined later in \eqref{shift}. Then, it is straightforward to observe that $\tU^X$ satisfies
	\[\partial_t\widetilde{U}^X +\partial_x A(\widetilde{U}^X)=\partial_x \left(M(\tU^X) \partial_x D \eta(\tU^X) \right) -\dot{X} \partial_x \widetilde{U}^X.\]
	
	As we mentioned above, we will use the relative entropy to measure the perturbation between two solutions. We define the relative entropy between $U=(v,u,w)$ and $\overline{U}=(\overline{v},\overline{u},\overline{w})$ as
	\[\eta(U|\overline{U}):=\frac{|u-\overline{u}|^2}{2}+Q(v|\overline{v})+\frac{|w-\overline{w}|^2}{2}\]
	and also define the relative flux $A(U|\overline{U})$ and relative entropy flux $G(U;\overline{U})$ as
	\[A(U|\overline{U})=A(U)-A(\overline{U})-DA(\overline{U})(U-\overline{U}),\]
	and 
	\[G(U;\overline{U})=G(U)-G(\overline{U})-D\eta(\overline{U})(A(U)-A(\overline{U})),\]
	where $G$ is the entropy flux for $\eta$ satisfying the condition $D_iG(U)=\sum_{k=1}^3 D_k \eta(U)D_iA_k(U)$. In the case of NSK system, we consider $G(U)=p(v)u$, and therefore, we can compute $A(U|\overline{U})$ and $G(U;\overline{U})$ as
	\[A(U|\overline{U})=\begin{pmatrix}
	0\\
	p(v|\overline{v})\\
	0
	\end{pmatrix}, \quad \text{and} \quad 
	G(U;\overline{U})=(p(v)-p(\overline{v}))(u-\overline{u}).\]
	Here, the relative internal energy $Q(v|\overline{v})$ and the relative pressure $p(v|\overline{v})$ are defined as
	\[Q(v|\overline{v})=Q(v)-Q(\overline{v})-Q'(\overline{v})(v-\overline{v}) \quad \mbox{and} \quad p(v|\overline{v})=p(v)-p(\overline{v})-p'(\overline{v})(v-\overline{v}).\]

	In order to obtain $L^2$-perturbation estimate in Lemma \ref{Main Lemma}, we focus on estimating the weighted relative entropy between the solution $U$ and the shifted viscous-dispersive shock wave $\tU^X$:
	\[\int_\mathbb{R} a^X (t,x) \eta\left(U(t,x)|\widetilde{U}^X(t,x)\right)\,d x,\quad\mbox{where}\quad a^X(t,x):=a(t,x-X(t)).\]
	
	\begin{lemma} \label{lem:rel-ent}
		Let $a$ be the weight function defined by \eqref{a} and $X:[0,T]\to\bbr$ be any Lipschitz continuous function. Let $U$ be a solution to \eqref{eq:NS-abs},  and $\tU^X$ be the shifted viscous-dispersive shock wave defined in  \eqref{shifted underlying wave}.  Then
		\begin{equation}\label{est-weight-rel-ent} 
		\frac{d}{dt}\int_\mathbb{R} a^X (t,x) \eta (U(t,x))|\tU^X(t,x)) \, dx=\dot{X}(t)Y(U)+\mathcal{J}^{\textup{bad}}(U)-\mathcal{J}^{\textup{good}}(U),
		\end{equation}
		where the terms $Y(U)$, $\mathcal{J}^{\textup{bad}}$, and $\mathcal{J}^{\textup{good}}$ are defined as
		\begin{align*}
		Y(U)&:=-\int_\mathbb{R} a_x^X \eta(U|\widetilde{U}^X)\,d x +\int_\mathbb{R} a^X D^2\eta(\widetilde{U}^X)\widetilde{U}_x^X (U-\widetilde{U}^X)\,d x,\\
		\mathcal{J}^{\textup{bad}}(U)&:=\int_\mathbb{R} a^X_x (p(v)-p(\tv^X))(u-\tu^X) \, dx-\int_\mathbb{R} a^X \tu_x^X p(v | \tv^X) \,d x\\
		&\quad-\int_\mathbb{R} a^X_x\left( \frac{u-\tu^X}{v} \partial_x (u-\tu^X) +\frac{(u-\tu^X)}{v^{5/2}} \partial_x (w-\tw^X)-\frac{(w-\tw^X)}{v^{5/2}} \partial_x (u-\tu^X)\right) dx\\
		&\quad+ \int_\mathbb{R} a^X_x \Bigg(  \frac{(u-\tu^X)(v-\tv^X) \partial_x \tu^X}{v \tv^X} \\
		&\hspace{2cm}+\frac{(v^{5/2}-(\tv^X)^{5/2})(u-\tu^X)\partial_x \tw^X}{v^{5/2}(\tv^X)^{5/2}} -\frac{(v^{5/2}-(\tv^X)^{5/2})(w-\tw^X)\partial_x \tu^X}{v^{5/2}(\tv^X)^{5/2}}  \Bigg) dx\\
		&\quad+\int_\mathbb{R} a^X \Bigg(\partial_x (u-\tu^X) \left( \frac{(v-\tv^X)\partial_x \tu^X}{v \tv^X} +\frac{(v^{5/2}-(\tv^X)^{5/2})\partial_x \tw^X}{v^{5/2}(\tv^X)^{5/2}} \right) \\
		&\hspace{2cm}-\partial_x(w-\tw^X)\frac{(v^{5/2}-(\tv^X)^{5/2})\partial_x \tu^X}{v^{5/2}(\tv^X)^{5/2}}  \Bigg) dx,\\
		\mathcal{J}^{\textup{good}}(U)&:= \frac{\sigma}{2}\int_\mathbb{R} a^X_x |u-\tu^X|^2 \, d x+\frac{\sigma}{2}\int_\mathbb{R} a_x^X |w-\tw^X|^2 \, dx+\sigma \int_\mathbb{R} a^X_x Q(v|\tv^X) \, dx\\
		&\quad+ \int_\mathbb{R} \frac{a^X}{v} |\partial_x (u-\tu^X)|^2 \,dx.
		\end{align*}
	\end{lemma}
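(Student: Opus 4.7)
The plan is to carry out a Dafermos--DiPerna relative entropy computation for the abstract system \eqref{eq:NS-abs}, adapted to the weight $a^X$ and the Lipschitz shift $X(t)$. The computation splits into three pieces: differentiating the weight in time, differentiating the relative entropy $\eta(U|\tU^X)$ using the PDEs, and reorganizing via integration by parts to match the precise form of $\mathcal{J}^{\textup{bad}}$, $\mathcal{J}^{\textup{good}}$ and $Y(U)$.

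First I would exploit the transport property of $a$. Since $a(t,x) = 1 + (u_- - \tu(x-\sigma t))/\sqrt{\delta_S}$ satisfies $\partial_t a + \sigma \partial_x a = 0$, the shifted weight obeys $\partial_t a^X = -(\sigma + \dot{X}) a_x^X$. Therefore the contribution $\int \partial_t a^X \, \eta(U|\tU^X)\,dx$ produces the piece $-\sigma \int a_x^X \eta(U|\tU^X)\,dx$ which, after expanding $\eta(U|\tU^X) = \tfrac{1}{2}|u-\tu^X|^2 + Q(v|\tv^X) + \tfrac{1}{2}|w-\tw^X|^2$, assembles into the three $\sigma$-weighted good terms in $\mathcal{J}^{\textup{good}}$, together with a piece $-\dot{X} \int a_x^X \eta(U|\tU^X)\,dx$ that is the first summand of $\dot{X} Y(U)$.

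Next I would use the algebraic identity $\partial_t \eta(U|\tU^X) = [D\eta(U) - D\eta(\tU^X)] \partial_t U - D^2\eta(\tU^X) \partial_t \tU^X (U-\tU^X)$ and substitute the equation \eqref{eq:NS-abs} for $U$ and its shifted counterpart for $\tU^X$. The $\dot{X} \partial_x \tU^X$ source term in the $\tU^X$-equation then contributes exactly the second summand $\int a^X D^2\eta(\tU^X) \tU^X_x (U-\tU^X)\,dx$ of $Y(U)$. Integrating the hyperbolic flux contribution by parts against $a^X$ produces the first two entries of $\mathcal{J}^{\textup{bad}}$, namely $\int a_x^X (p(v)-p(\tv^X))(u-\tu^X)\,dx$ and $-\int a^X \tu_x^X p(v|\tv^X)\,dx$. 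The viscous contribution splits further: the quadratic form in $\partial_x(u-\tu^X)$ and $\partial_x(w-\tw^X)$ built from $M(U)$ collapses, thanks to the antisymmetry of the $(u,w)$-block of $M$, to the clean dissipation $\int \frac{a^X}{v} |\partial_x(u-\tu^X)|^2\,dx$ (the only good term at this order); the remaining viscous pieces come from $a_x^X$ acting on the viscous flux and from the discrepancy $M(U) - M(\tU^X)$, and these supply precisely the cross terms of $\mathcal{J}^{\textup{bad}}$ involving $v^{5/2} - (\tv^X)^{5/2}$ and the products of $(u-\tu^X)$, $(w-\tw^X)$ with $\partial_x \tu^X$, $\partial_x \tw^X$.

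The main obstacle will be the careful bookkeeping of the many cross terms produced by the non-symmetric matrix $M$ and by the auxiliary $w$-equation. Each viscous coupling must be integrated by parts so as to land either in an $a_x^X$-weighted bad term or in the single clean dissipation $D_1$, and one must check the vanishing of all boundary contributions under the assumed decay of $U - \tU^X$. Because the antisymmetry of the $(u,w)$-block of $M$ kills the formal $w$-dissipation at this step, no $w$-derivative good term is produced at the level of the weighted relative entropy; this is the structural reason why the required $w$-dissipations $G_w$, $G_{w_1}$, $G_{w_2}$ must be recovered separately in the higher-order analysis of Section \ref{sec:high-order}.
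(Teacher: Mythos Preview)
Your proposal is correct and follows essentially the same approach as the paper: the paper invokes the abstract relative-entropy identity from \cite[Lemma~2.3]{KV21} to obtain the decomposition into $Y(U)$, the $-\sigma\int a_x^X\eta(U|\tU^X)$ term, and the five pieces $I_{11},\dots,I_{15}$, and then substitutes the explicit forms of $\eta$, $A$, $G$, $M$ to compute each $I_{1i}$ via integration by parts. Your outline reproduces exactly this computation, including the key cancellation in $I_{13}$ coming from the antisymmetry of the $(u,w)$-block of $M$ (which leaves only the $u$-dissipation) and the vanishing of the $I_{15}$-type term because $(D\eta)(U|\tU^X)$ is supported only in the $v$-component while $M(\tU^X)\partial_x D\eta(\tU^X)$ has zero $v$-component.
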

	\begin{proof}
		Since the system \eqref{eq:NS-abs} is written in a general hyperbolic system, we may use the same computations as in \cite[Lemma 2.3]{KV21} to estimate the time derivative of the weighted relative entropy as
		\begin{align*}
		\frac{d}{dt} \int_\mathbb{R} a^X \eta ( U | \tU^X) \, dx &=\dot{X}(t)Y(U)-\sigma\int_\mathbb{R} a_x^X\eta(U|\tU^X) \, dx+ \sum_{i=1}^5 I_{1i},
		\end{align*}
		where
		
		\begin{align*}
		I_{11}&:=-\int_\R a^X \partial_x G(U;\tU^X) \, dx,\\
		I_{12}&:=-\int_\R a^X \partial_x D \eta (\tU^X) A(U| \tU^X) \, dx,\\
		I_{13}&:=\int_\R a^X \left(D\eta(U)-D\eta(\tU^X) \right) \partial_x \left( M(U)\partial_x \left(D\eta (U)-D\eta(\tU^X) \right)\right) \, dx,\\
		I_{14}&:=\int_\R a^X \left( D \eta(U)-D\eta(\tU^X)\right)\partial_x \left( (M(U) -M(\tU^X)) \partial_x D \eta(\tU^X)\right)\,dx,\\
		I_{15}&:=\int_\R a^X (D\eta)(U|\tU^X)\partial_x \left( M(\tU^X) \partial_x D \eta(\tU^X) \right)\, dx.
		\end{align*}
		Then, we substitute the exact expressions of $\eta$, $A$ and $G$ to represent $I_{1i}$ in terms of $(u,v,w)$ as
		
		\begin{align*}
		I_{11}&=\int_\R a_x^X (p(v)-p(\tv^X) )(u-\tu^X)\, dx, \quad I_{12}=-\int_\R a^X \tu_x^X p(v|\tv^X) \, dx, \\
		I_{13}&=\int_\R a^X \left( (u-\tu^X) \partial_x \left( \frac{\partial_x (u-\tu^X)}{v}+\frac{\partial_x (w-\tw^X) }{v^{5/2}} \right) +(w-\tw^X)\partial_x \left( -\frac{\partial_x(u-\tu^X)}{v^{5/2}} \right) \right) dx\\
		&=-\int_\R a^X \frac{|\partial_x(u-\tu^X)|^2}{v} \, dx \\
		&\quad -\int_\R a_x^X \left( (u-\tu^X) \frac{\partial_x(u-\tu^X)}{v}+(u-\tu^X)\frac{\partial_x(w-\tw^X)}{v^{5/2}}-(w-\tw^X)\frac{\partial_x(u-\tu^X)}{v^{5/2}}\right) dx,\\
		I_{14}&=\int_\R a^X \Bigg( (u-\tu^X) \partial_x \left( \left(\frac{1}{v}-\frac{1}{\tv^X} \right)\partial_x \tu^X+ \left( \frac{1}{v^{5/2}}-\frac{1}{(\tv^X)^{5/2}}\right) \partial_x \tw^X \right)\\
		&\hspace{2cm} -(w-\tw^X) \partial_x \left( \left(\frac{1}{v^{5/2}}-\frac{1}{(\tv^X)^{5/2}} \right) \partial_x \tu^X \right) \Bigg) \,dx \\
		&=\int_\R a^X_x\Bigg( \frac{(v-\tv^X)(u-\tu^X)\partial_x \tu^X}{v \tv^X}+\frac{(v^{5/2}-(\tv^X)^{5/2})(u-\tu^X)\partial_x \tw^X}{v^{5/2}(\tv^X)^{5/2}}\\
		&\hspace{3cm}-\frac{(v^{5/2}-(\tv^X)^{5/2})(w-\tw^X)\partial_x \tu^X}{v^{5/2}(\tv^X)^{5/2}} \Bigg) \, dx \\
		&\quad + \int_\R a^X \Bigg(\partial_x (u-\tu^X) \left( \frac{(v-\tv^X)\partial_x \tu^X}{v \tv^X}+\frac{(v^{5/2}-(\tv^X)^{5/2})\partial_x \tw^X}{v^{5/2} (\tv^X)^{5/2}}\right) \\
		&\hspace{3cm}-\partial_x (w-\tw^X) \frac{(v^{5/2}-(\tv^X)^{5/2})\partial_x \tu^X}{v^{5/2}(\tv^X)^{5/2}}\Bigg) \, dx,
		\end{align*}
		and $I_{15}=0$. Combining all the estimates on $I_{1i}$, we obtain the desired estimate.
	\end{proof}

	\subsection{Maximization on $p(v)-p(\tv^X)$} 
	Among the terms in $\mathcal{J}^\text{bad}$, a primary bad term is 
	\[\int_\mathbb{R} a^X_x (p(v)-p(\tv^X))(u-\tu^X) \, d x\]
	where the perturbations for $p(v)$ and $u$ are coupled. In order to exploit the parabolic term on $u$-variable and hence use the Poincar\'e-type inequality, we separate $u-\tu^X$ from $p(v)-p(\tv^X)$ by using the quadratic structure of $p(v)-p(\tv^X)$. We first obtain the following estimates on several terms in $\mathcal{J}^{\textup{bad}}(U)$ and $\mathcal{J}^{\textup{good}}(U)$. 

	\begin{lemma}\label{lem:quad}
		There exists a positive constant $C_*$ such that 
		\begin{align}
		\begin{aligned}\label{quadratic estimate}
		-\int_{\mathbb{R}}& a^X \tu^X_x p(v|\tv^X) \, dx-\sigma \int_{\mathbb{R}} a^X_x Q(v|\tv^X) \, dx\\
		&\le -C_*\int_{\R}a_x^X|p(v)-p(\tv^X)|^2\,dx\\
		&\quad +C\delta_S \int_\R a_x^X\big|p(v)-p(\tv^X)\big|^2 \, d x+C\int_\R a_x^X\big|p(v)-p(\tv^X)\big|^3 \, d x.
		\end{aligned}
		\end{align}
	\end{lemma}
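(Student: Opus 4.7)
The plan is to exploit two structural identities and then apply Lemma \ref{lem : Estimate-relative}(3) to both relative quantities. From the traveling-wave identity $-\sigma\tv_\xi-\tu_\xi=0$ in \eqref{viscous-dispersive-shock-ext} we have $\tu_x^X=-\sigma\tv_x^X$, and the weight definition gives $a_x^X=\sigma\tv_x^X/\sqrt{\delta_S}$ via \eqref{a_x}. Combining these yields
\[-a^X\tu_x^X=\sqrt{\delta_S}\,a^X a_x^X,\]
so the $-\int a^X\tu_x^X p(v|\tv^X)\,dx$ term automatically carries an $O(\sqrt{\delta_S})$ prefactor and will be a small lower-order contribution. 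The second structural fact is the thermodynamic identity $-p'(v_-)=\gamma v_-^{-\gamma-1}=\sigma_\ell^2$, which yields $p(v_-)^{-1/\gamma-1}=v_-^{\gamma+1}=\gamma/\sigma_\ell^2$; this algebraic match is what makes the quadratic coming from $Q(v|\tv^X)$ produce a clean $O(1)$ negative coefficient.

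Concretely, for the first integral I would use the upper bound on $p(v|\tv^X)$ from Lemma \ref{lem : Estimate-relative}(3), together with the uniform bound $1\le a^X\le 3/2$ and the fact that $p(\tv^X)$ is bounded away from zero, to obtain
\[-\int_\R a^X\tu_x^X\,p(v|\tv^X)\,dx=\sqrt{\delta_S}\int_\R a^X a_x^X p(v|\tv^X)\,dx\le C\sqrt{\delta_S}\int_\R a_x^X|p(v)-p(\tv^X)|^2\,dx.\]
For the second integral I would apply the lower bound on $Q(v|\tv^X)$ from the same lemma, and then use \eqref{shock_speed_est} and \eqref{shock_speed_est-2} to replace $\sigma$ by $\sigma_\ell$ and $p(\tv^X)^{-1/\gamma-1}$ by $\gamma/\sigma_\ell^2$ up to $O(\delta_S)$ coefficient errors. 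This gives
\[-\sigma\int_\R a_x^X Q(v|\tv^X)\,dx\le -\tfrac{1}{2\sigma_\ell}\int_\R a_x^X|p(v)-p(\tv^X)|^2\,dx+C\delta_S\int_\R a_x^X|p(v)-p(\tv^X)|^2\,dx+C\int_\R a_x^X|p(v)-p(\tv^X)|^3\,dx,\]
where the cubic term is inherited directly from the remainder in Lemma \ref{lem : Estimate-relative}(3).

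Adding the two estimates, the total quadratic coefficient is $-\bigl(1/(2\sigma_\ell)-C\sqrt{\delta_S}-C\delta_S\bigr)$, so for any fixed $C_*\in(0,1/(2\sigma_\ell))$ matching the constant that will appear in later sections, I would shrink $\delta_0$ so that $C\sqrt{\delta_S}\le 1/(2\sigma_\ell)-C_*$ for all $\delta_S<\delta_0$; the bad $\sqrt{\delta_S}$ contribution is thereby absorbed into the leading $-C_*$ piece, and the claimed inequality follows with the $C\delta_S$ and $C(\cdot)^3$ residuals as stated. The main obstacle is purely algebraic bookkeeping: one must simultaneously invoke the weight formula, the traveling-wave ODE, and the thermodynamic identity $v_-^{\gamma+1}=\gamma/\sigma_\ell^2$ to see both that the pressure-work integral is genuinely subdominant and that the quadratic coefficients on the two sides line up to give a strictly negative leading constant.
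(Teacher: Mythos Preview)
Your proof is correct and follows essentially the same route as the paper: the same weight identity $-a^X\tu_x^X=\sqrt{\delta_S}\,a^X a_x^X$, the same application of Lemma~\ref{lem : Estimate-relative}(3) to bound $p(v|\tv^X)$ above and $Q(v|\tv^X)$ below, and the same use of \eqref{shock_speed_est}--\eqref{shock_speed_est-2} to reduce to the leading $-\tfrac{1}{2\sigma_\ell}$ coefficient. The only difference is that the paper retains the precise coefficient $\tfrac{\gamma+1}{2\gamma p(v_-)}$ on the $\sqrt{\delta_S}$ term rather than a generic $C$, so that $C_*$ can be written exactly as in \eqref{C_star}; this sharpness is not needed for the lemma itself but is used downstream in the Poincar\'e-type estimate of Section~\ref{Est-main-part}.
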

	\begin{proof}
		Define $I_{21}$ and $I_{22}$ as
		\begin{align*}
		I_{21}&:=-\int_{\mathbb{R}} a^X \tu^X_x p(v|\tv^X) \, dx =\int_{\mathbb{R}}  a^X \sqrt{\delta_S} a^X_x p(v | \tv^X) \,dx, \\
		I_{22}&:=\sigma \int_{\mathbb{R}} a^X_x Q(v|\tv^X) \, dx,
		\end{align*}
		where we use \eqref{a_x} in rewriting $I_{21}$. We use the estimate on $p(v|\tv)$ and $Q(v|\tv)$ in Lemma \ref{lem : Estimate-relative} to obtain upper and lower bounds of $I_{21}$ and $I_{22}$ respectively:
		\begin{align*}
		I_{21} &\le \int_\mathbb{R} a^X a^X_x (1+\sqrt{\delta_S})\sqrt{\delta_S}\left( \frac{\gamma+1}{2 \gamma } \frac{1}{p(\tv^X)} +C|p(v)-p(\tv^X)| \right) |p(v)-p(\tv^X)|^2 \, d x, \\
		I_{22} &\ge \sigma \int_\mathbb{R} a^X_x \left (\frac{p(\tv^X)^{-\frac{1}{\gamma}-1}}{2 \gamma} |p(v)-p(\tv^X)|^2 -\frac{1+\gamma}{3 \gamma^2} p(\tv^X)^{-\frac{1}{\gamma}-2} (p(v)-p(\tv^X))^3 \right) \, dx.
		\end{align*}
		
		Then using \eqref{shock_speed_est} and \eqref{shock_speed_est-2}, we have
		\begin{align}
		\begin{aligned}\label{I21I22}
		I_{21} -I_{22} &\le - \frac{1}{2} \left(\frac{1}{\sigma_\ell}- (\sqrt{\delta_S}+\delta_S)\frac{\gamma+1}{ \gamma } \frac{1}{p(v_-)}\right)  \int_\R a^X_x |p(v)-p(\tv^X)|^2 \, d x \\
		&\quad +C\delta_S \int_\R a_x^X\big|p(v)-p(\tv^X)\big|^2 \, d x+C\int_\R a_x^X\big|p(v)-p(\tv^X)\big|^3 \, d x.
		\end{aligned}
		\end{align}
		Therefore we may choose a positive constant $C_*$ as 
		\begin{equation}\label{C_star}
		C_* = \frac{1}{2}\left(\frac{1}{\sigma_{\ell}}-(\sqrt{\delta_S}+\delta_S)\frac{\gamma+1}{\gamma}\frac{1}{p(v_-)}\right).
		\end{equation}
		Then it follows from \eqref{I21I22} that
		\begin{align*}
		I_{21}-I_{22}&\le -C_*\int_{\R}a_x^X|p(v)-p(\tv^X)|^2\,dx\\
		&\quad +C\delta_S \int_\R a_x^X\big|p(v)-p(\tv^X)\big|^2 \, d x+C\int_\R a_x^X\big|p(v)-p(\tv^X)\big|^3 \, d x
		\end{align*} 
		which is the desired estimate \eqref{quadratic estimate}.
	\end{proof}
	Thanks to Lemma \ref{lem:quad}, we extract the quadratic structure of $p(v)-p(\tv)$ from $\mathcal{J}^{\textup{bad}}(U)$ and $\mathcal{J}^{\textup{good}}(U)$ as follows:
	\begin{align*}
	&\int_{\R}a^X_x (p(v)-p(\tv^X))(u-\widetilde{u}^X)\,dx-\int_{\R}a^X\tu_x^Xp(v|\tv^X)\,dx-\sigma\int_{\R}a_x^XQ(v|\tv^X)\,dx\\
	&\le  \int_{\R} a^X_x (p(v)-p(\tv^X))(u-\widetilde{u}^X)\,dx - C_* \int_{\R} a^X_x |p(v)-p(\tv^X)|^2\, dx \\ 
	& \quad +C\delta_S \int_\R a_x^X\big|p(v)-p(\tv^X)\big|^2 \, d x+C\int_\R a_x^X\big|p(v)-p(\tv^X)\big|^3 \, d x \\
	&= \int_{\R} a^X_x \left[ -C_*  \left( (p(v) -p(\tv^X))^2-\dfrac{(p(v) -p(\tv^X))(u-\widetilde{u}^X)}{C_*} + \dfrac{(u-\widetilde{u}^X)^2}{4 C_*^2} \right) + \dfrac{(u-\widetilde{u}^X)^2}{4C_*}  \right]\,dx \\
	& \quad +C\delta_S \int_\R a_x^X\big|p(v)-p(\tv^X)\big|^2 \, d x+C\int_\R a_x^X\big|p(v)-p(\tv^X)\big|^3 \, d x \\
	&=  -C_*\int_{\R} a^X_x \left| (p(v) -p(\tv^X))- \frac{u-\widetilde{u}^X}{2 C^*} \right|^2 \,dx+ \frac{1}{4C_*} \int_{\R}a^X_x |u-\tu^X|^2\,dx \\ 
	& \quad +C\delta_S \int_\R a_x^X\big|p(v)-p(\tv^X)\big|^2 \, d x+C\int_\R a_x^X\big|p(v)-p(\tv^X)\big|^3 \, d x.
	\end{align*}
	Then using the above estimate, we derive an upper bound of $\mathcal{J}^{\text{bad}}(U)-\mathcal{J}^{\text{good}}(U)$ as
	\[\mathcal{J}^{\text{bad}}(U) - \mathcal{J}^{\text{good}}(U) \le \mathcal{B}(U)-\mathcal{G}(U),\]
	where
	\begin{align*}
	\mathcal{B}(U)&:=\frac{1}{4 C_*}\int_\mathbb{R} a^X_x |u-\tu^X|^2 \, dx+C\delta_S \int_\R a_x^X\big|p(v)-p(\tv^X)\big|^2 \, d x+C\int_\R a_x^X\big|p(v)-p(\tv^X)\big|^3 \, d x \\
	&\quad-\int_\mathbb{R} a^X_x\left( \frac{u-\tu^X}{v} \partial_x (u-\tu^X)+\frac{(u-\tu^X)}{v^{5/2}} \partial_x (w-\tw^X)-\frac{(w-\tw^X)}{v^{5/2}} \partial_x (u-\tu^X) \right) dx\\
	&\quad+\int_\mathbb{R} a^X_x \Bigg(  \frac{(u-\tu^X)(v-\tv^X) \partial_x \tu^X}{v \tv^X} +\frac{(v^{5/2}-(\tv^X)^{5/2})(u-\tu^X)\partial_x \tw^X}{v^{5/2}(\tv^X)^{5/2}} \\
	&\hspace{6cm}-\frac{(v^{5/2}-(\tv^X)^{5/2})(w-\tw^X)\partial_x \tu^X}{v^{5/2}(\tv^X)^{5/2}} \Bigg) dx\\
	&\quad+\int_\mathbb{R} a^X \Bigg( \partial_x (u-\tu^X) \left( \frac{(v-\tv^X)\partial_x \tu^X}{v \tv^X} +\frac{(v^{5/2}-(\tv^X)^{5/2})\partial_x \tw^X}{v^{5/2}(\tv^X)^{5/2}} \right)\\
	&\hspace{5cm} -\partial_x(w-\tw^X)\frac{(v^{5/2}-(\tv^X)^{5/2})\partial_x \tu^X}{v^{5/2}(\tv^X)^{5/2}} \Bigg) dx,\\ 
	\mathcal{G}(U)&:=C_* \int_\mathbb{R} a^X_x \left| p(v)-p(\tv^X)-\frac{u-\tu^X}{2C_*}\right|^2 \, d x+ \frac{\sigma}{2} \int_\mathbb{R} a^X_x |u-\tu^X|^2 \, dx\\
	&\quad  +\frac{\sigma}{2}\int_\mathbb{R} a_x^X |w-\tw^X|^2 \, dx+ \int_\mathbb{R} \frac{a^X}{v} |\partial_x (u-\tu^X)|^2 \, dx.
	\end{align*} 
	Therefore, the estimate \eqref{est-weight-rel-ent} in Lemma \ref{lem:rel-ent} can be further bounded as
	\begin{equation}\label{est-1}
	\frac{d}{dt}\int_{\bbr}a^X\eta(U|\widetilde{U}^X)\,dx \le \dot{X}(t) Y(U) + \mathcal{B}(U)-\mathcal{G}(U).
	\end{equation}
	We now decompose the bad terms $\mathcal{B}(U)$ and the good terms $\mathcal{G}(U)$ as
	\begin{align*}
	&\mathcal{B}(U) =\sum_{i=1}^6 \mathcal{B}_i(U) + \sum_{i=1}^6 \mathcal{K}_i(U),\\
	&\mathcal{G}(U) = \mathcal{G}_1(U) + \mathcal{G}_2(U) + \mathcal{G}_3(U)+ \mathcal{D}(U).
	\end{align*}
	Here the terms $\mathcal{B}_i(U)$ are defined as
	\begin{align*}
	&\mathcal{B}_1(U) :=\frac{1}{4 C_*} \int_\mathbb{R} a^X_x  |u-\tu^X|^2 \, dx, & &\mathcal{B}_2(U) :=-\int_\mathbb{R} a^X_x \frac{u-\tu^X}{v} \partial_x (u-\tu^X) \, dx, \\ 
	&\mathcal{B}_3(U) := \int_\mathbb{R} a^X_x(u-\tu^X)(v-\tv^X) \frac{\partial_x \tu^X}{v \tv^X} \, dx, & &\mathcal{B}_4(U) :=\int_\mathbb{R} a^X \partial_x (u-\tu^X)  \frac{v-\tv^X}{v \tv^X} \partial_x \tu^X \, dx, \\
	&\mathcal{B}_5(U) :=C \delta_S \int_\R a_x^X\big|p(v)-p(\tv^X)\big|^2 \, d x, & &\mathcal{B}_6(U) :=C\int_\R a_x^X\big|p(v)-p(\tv^X)\big|^3 \, d x.
	\end{align*}
	and the terms $\mathcal{K}_i(U)$ are given as
	\begin{align*}
	&\mathcal{K}_1(U)=-\int_{\R}a_x^X\frac{(u-\tu^X)}{v^{5/2}}\pa_x(w-\tw^X)\,dx,\\
	&\mathcal{K}_2(U)=\int_{\R}a_x^X\frac{w-\tw^X}{v^{5/2}}\pa_x(u-\tu^X)\,dx,\\
	&\mathcal{K}_3(U)=\int_{\R}a_x^X\frac{(v^{5/2}-(\tv^X)^{5/2})(u-\tu^X)\pa_x\tw^X}{v^{5/2}(\tv^X)^{5/2}},\\
	&\mathcal{K}_4(U)=-\int_{\R}a_x^X\frac{(v^{5/2}-(\tv^X)^{5/2})(w-\tw^X)\pa_x\tu^X}{v^{5/2}(\tv^X)^{5/2}}\,dx,\\
	&\mathcal{K}_5(U)=\int_{\R} a^X\pa_x(u-\tu^X)\frac{(v^{5/2}-(\tv^X)^{5/2})\pa_x\tw^X}{v^{5/2}(\tv^X)^{5/2}}\,dx,\\
	&\mathcal{K}_6(U)=-\int_{\R}a^X\pa_x(w-\tw^X)\frac{(v^{5/2}-(\tv^X)^{5/2})\pa_x\tu^X}{v^{5/2}(\tv^X)^{5/2}}\,dx.
	\end{align*}
	We note that the terms $\mathcal{K}_i(U)$ are the terms from the Korteweg force, compared to the classical NS equations. The decomposition of the good terms is also defined as follows:
	\begin{align*}
	&\mathcal{G}_1(U):=C_* \int_\mathbb{R} a^X_x \left|p(v)-p(\tv^X)-\frac{u-\tu^X}{2C_*} \right|^2 \, d x,\quad \mathcal{G}_2(U):=\frac{\sigma}{2} \int_\mathbb{R} a^X_x |u-\tu^X|^2  \, dx,\\
	&\mathcal{G}_3(U):=\frac{\sigma}{2} \int_\mathbb{R} a^X_x |w-\tw^X|^2 \, dx,\quad \mathcal{D}(U):= \int_\mathbb{R} \frac{a^X}{v} |\partial_x (u-\tu^X)|^2 \, dx.
	\end{align*}
	On the other hand, since $Y(U)$ is expanded as
	\begin{align*}
	Y(U)&=-\int_{\bbr} a^X_x \eta(U|\widetilde{U}^X)\,dx +\int_\bbr a^X D^2\eta(\widetilde{U}^X)(\widetilde{U})^X_x (U-\widetilde{U}^X)\,dx\\
	& = -\int_{\bbr} a^X_x \left(\frac{|u-\widetilde{u}^X|^2}{2}+\frac{|w-\tw^X|^2}{2}+Q(v|\widetilde{v}^X)\right)\,dx +\int_{\bbr} a^X \tu^X_x (u-\widetilde{u}^X)\,dx\\
	&\quad -\int_{\bbr} a^X p'(\widetilde{v}^X)\widetilde{v}^X_x (v-\widetilde{v}^X)\,dx+\int_\mathbb{R} a^X \tw^X_x(w-\tw^X) \, dx,
	\end{align*}
	we decompose $Y$ as
	\[Y= \sum_{i=1}^7 Y_{i},\]
	where
	\begin{align*}
	Y_{1} &:=\int_{\mathbb{R}} a^X \tu^X_x (u-\tu^X) \,dx,\quad Y_{2}:=\frac{1}{\sigma}\int_{\mathbb{R}} a^X p'(\tv^X)\tv^X_x (v-\tv^X) \,dx,\\
	Y_{3} &:=-\frac{1}{2}\int_{\mathbb{R}} a^X_x \Big( u-\tu^X -2C_* \big(p(v)-p(\tv^X) \big)\Big)\Big(u-\tu^X +2C_* \big(p(v)-p(\tv^X) \big)\Big) \, dx,\\
	Y_{4} &:=-\frac{1}{2} \int_{\mathbb{R}} a^X_x 4C_*^2 \big(p(v)-p(\tv^X) \big)^2 \,dx-\int_{\mathbb{R}} a^X_x Q(v|\tv^X)dx,\\
	Y_{5} &:=- \int_\mathbb{R} a^X p'(\tv^X) \tv^X_x \left(v-\tv^X+\frac{2C_*}{\sigma}(p(v)-p(\tv^X)) \right) \, d x,\\
	Y_{6} &:= \int a^X p'(\tv^X) \tv^X_x \frac{2C_*}{\sigma}\left(p(v)-p(\tv^X)-\frac{u-\tu^X}{2C_*}\right) \, d x,\\
	Y_{7} &:=-\int_\mathbb{R} a^X_x \frac{|w-\tw^X|^2}{2} \, dx +\int_\mathbb{R} a^X \tw^X_x (w-\tw^X) \, dx.
	\end{align*}
	We now define a shift function $X(t)$ so that it satisfies the following ODE:  
	\begin{equation}\label{shift}
	\dot{X} = -\frac{M}{\delta_S}(Y_{1}+Y_{2}),\quad X(0)=0.
	\end{equation}
	With this choice of shift $X$, the term $\dot{X}(t)Y(U)$ in \eqref{est-1} can be written as
	\[\dot{X}(t)Y(U)= -\frac{\delta_S}{M}|\dot{X}|^2+\dot{X}\sum_{i=3}^7Y_{i}.\]
	In conclusion, we decompose the right-hand side of \eqref{est-1} as
	\begin{align}
	\begin{aligned}\label{est}
	\frac{d}{dt}\int_{\bbr}a^X\eta(U|\widetilde{U}^X)\,dx &= \underbrace{-\frac{\delta_S}{2M}|\dot{X}|^2 + \mathcal{B}_1 -\mathcal{G}_2-\frac{3}{4}\mathcal{D}}_{=: \mathcal{R}_1}\\
	&\quad \underbrace{-\frac{\delta_S}{2M}|\dot{X}|^2 + \dot{X}\sum_{i=3}^7Y_{i} +\sum_{i=2}^6\mathcal{B}_i +\sum_{i=1}^6 \mathcal{K}_i-\mathcal{G}_1-\mathcal{G}_3-\frac{1}{4}\mathcal{D}}_{=:\mathcal{R}_2}.
	\end{aligned}
	\end{align}
	In the following subsections, we estimate the terms in $\mathcal{R}_1$ and $\mathcal{R}_2$ respectively.
	
	\subsection{Estimate of $\mathcal{R}_1$} \label{Est-main-part}
	Estimation of $\mathcal{R}_1$ is the most important part in the proof of Lemma \ref{Main Lemma}, in which the Poincar\'e-type inequality is crucially used. For a fixed $t\ge0$, we define an auxiliary variable $y$ as
	\begin{equation*} 
	y:=\frac{u_- -\tu(x-\sigma t-X(t))}{\delta_S}.
	\end{equation*}
	Then it follows from the definition that the map $x\mapsto y=y(x)$ is one-to-one and 
	\begin{equation*}
	\frac{d y}{d x} = -\frac{1}{\delta_S} \tu^X_x>0,\quad\mbox{and}\quad\lim_{x\to -\infty} y=0, \quad \lim_{x \to \infty}y=1.
	\end{equation*}
	Using the new variable $y$, we will apply the following Poincarè-type inequality:
	
	\begin{lemma}\cite[Lemma 2.9]{KV21}\label{lem: KV inequality} For any $f:[0,1] \to \R$ with $\int_0^1 y(1-y) |f'|^2 \, dy <\infty$, 
		\begin{equation*}
		\int_0^1 \left| f-\int_0^1 f \,dy \right|^2 dy \le \frac{1}{2} \int_0^1 y(1-y)|f'|^2 \,dy.
		\end{equation*}
	\end{lemma}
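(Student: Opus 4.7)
The statement is a sharp weighted Poincar\'e inequality with the degenerate Jacobi weight $y(1-y)$; the optimal constant $\tfrac{1}{2}$ is precisely the reciprocal of the smallest nonzero eigenvalue of the singular Sturm--Liouville operator $L\phi := -(y(1-y)\phi')'$ on $L^2([0,1])$. The plan is to expand in the eigenbasis of $L$ and reduce the inequality to an elementary comparison of eigenvalues.

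The eigenfunctions of $L$ are the shifted Legendre polynomials $Q_n(y) := P_n(2y-1)$ for $n = 0, 1, 2, \ldots$. A change of variables $x = 2y - 1$ in the classical Legendre equation $-((1-x^2) P_n'(x))' = n(n+1) P_n(x)$ yields $L Q_n = n(n+1) Q_n$, together with the orthogonality relation $\int_0^1 Q_n(y) Q_m(y)\,dy = \delta_{nm}/(2n+1)$ and completeness of $\{Q_n\}_{n \geq 0}$ in $L^2([0,1])$. In particular, $Q_0 \equiv 1$ and $Q_1(y) = 2y-1$ correspond to eigenvalues $0$ and $2$ respectively.

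The proof then proceeds in three steps. First, since $\int_0^1 (f - \bar f)\,dy = 0$ where $\bar f := \int_0^1 f\,dy$, the expansion $f - \bar f = \sum_{n \geq 1} c_n Q_n$ contains no $Q_0$-component, and Parseval gives $\int_0^1 |f - \bar f|^2\,dy = \sum_{n \geq 1} c_n^2/(2n+1)$. Second, for the weighted Dirichlet energy I would integrate by parts, noting that the boundary terms $[y(1-y) f' (f - \bar f)]_0^1$ vanish automatically because the weight $y(1-y)$ vanishes at the endpoints, and then invoke the eigenvalue equation together with orthogonality to obtain $\int_0^1 y(1-y) |f'|^2\,dy = -\int_0^1 (y(1-y) f')' (f - \bar f)\,dy = \sum_{n \geq 1} n(n+1)\, c_n^2/(2n+1)$. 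Third, since $n(n+1) \geq 2$ for every $n \geq 1$ with equality iff $n = 1$, we conclude $\int_0^1 |f - \bar f|^2\,dy \leq \tfrac{1}{2} \int_0^1 y(1-y)|f'|^2\,dy$, and the inequality is saturated exactly when $f - \bar f$ is a multiple of $Q_1 = 2y - 1$.

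The main point of care rather than a true obstacle is the function-space setup: the natural weighted Sobolev space $\mathcal{H} := \{f : \int_0^1 y(1-y)|f'|^2\,dy < \infty\}$ modulo constants is the correct domain for $L$, and the vanishing of the weight at the endpoints plays the role of the (lack of) boundary conditions, so both the integration by parts and the spectral completeness go through without extra boundary hypotheses. A density argument using polynomial approximation---for which both sides reduce to finite sums and the computation above is literal---then upgrades the estimate to all $f \in \mathcal{H}$ with mean value $\bar f$.
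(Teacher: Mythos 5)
Your spectral proof via shifted Legendre polynomials is correct and follows essentially the same route as the proof in the reference the paper cites, [KV21, Lemma~2.9]; the present paper states the lemma without reproducing the argument. The central observation---that the sharp constant $\tfrac12$ is the reciprocal of the first nonzero eigenvalue $1\cdot 2$ of the degenerate Sturm--Liouville operator $\phi\mapsto -(y(1-y)\phi')'$, diagonalized by $Q_n(y)=P_n(2y-1)$---is precisely what the cited proof uses, and your handling of the boundary terms (via the vanishing weight) and the density/polynomial approximation step is the standard way to make the integration by parts rigorous for all $f$ with $\int_0^1 y(1-y)|f'|^2\,dy<\infty$.
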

	
	We apply Lemma \ref{lem: KV inequality} to the perturbation $f$ of the following form:
	\begin{equation*} 
	f:=\left(u(t,\cdot)-\tu(\cdot-\sigma t-X(t)\right)\circ y^{-1},
	\end{equation*}
	that is $f(y)=u(t,x)-\tu(x-\sigma t-X(t))$. Therefore, the goal of this subsection is to represent $\mathcal{R}_1$ in terms of $f$ and then use Lemma \ref{lem: KV inequality} to estimate it. In the following, we estimate the terms in $\mathcal{R}_1$ separately.\\
	
	\noindent $\bullet$  (Estimate of $\frac{\delta_S}{2M}|\dot{X}|^2$): 
	We use the definition of $Y_1$ and change of variables for $y$ to observe that
	\[Y_1=\int_{\R} a^X \tu_x^X (u-\tu^X)\,dx
	=-\delta_S \int_0^1 a^X f \,dy.\]
	Then, using \eqref{shock_speed_est} and $\| a^X-1 \|_{L^\infty(\R_+ \times \R)} \le \sqrt{\delta_S}$, we have
	\[\left| Y_1+\delta_S \int_0^1 f \, dy \right|\le\delta_S\int_0^1|a^X-1||f| dy \le   \delta_S^{3/2} \int_0^1 |f| \, dy.\]
	To estimate $Y_2$, we first use the relation $\sigma \tv^X_x=-\tu^X_x $ and change of variables for $y$ to yield
	\[Y_2=-\frac{1}{\sigma^2} \int_{\mathbb{R}} a^X p'(\tv^X)\tu^X_x (u-\tu^X) \, d x=\frac{\delta_S}{\sigma^2} \int_0^1 a^X p'(\tv^X) f \, dy.\]
	This, together with the estimates \eqref{shock_speed_est}, \eqref{shock_speed_est-2} and $\| a^X-1 \|_{L^\infty(\R_+ \times \R)} \le \sqrt{\delta_S}$, implies 
	\begin{align*}
	\left| Y_2 + \delta_S \int_0^1 f \, dy \right|\le \delta_S\int_0^1\left|\frac{a^X p'(\tv^X)}{\sigma^2}+1\right||f|dy \le C \delta_S (\sqrt{\delta_S}+\delta_S) \int_0^1 |f| \, dy.
	\end{align*}
	Since $\dot{X}=-\frac{M}{\delta_S}(Y_1+Y_2)$, we combine the estimates for $Y_1$ and $Y_2$ to obtain
	\begin{align*}
	\left| \dot{X} - 2 M \int_0^1 f \, dy \right| \le \frac{M}{\delta_S} \left( \left| Y_1 +\delta_S \int_0^1 f \, dy \right|+\left| Y_2+\delta_S \int_0^1 f\, dy \right| \right)\le C (\sqrt{\delta_S}+ \delta_S) \int_0^1 |f| \, dy,
	\end{align*}
	which implies
	\begin{align*}
	\left( \left| 2M \int_0^1 f \, dy \right| -|\dot{X}| \right)^2 \le C(\sqrt{\delta_S}+\delta_S)^2 \int_0^1 |f|^2 \, dy\le C\delta_S\int_0^1|f|^2\,d y.
	\end{align*}
	We use an elementary inequality $\frac{p^2}{2}-q^2 \le (p-q)^2$ for $p,q \in \mathbb{R}$ to obtain 
	\begin{align*}
	2M^2\left(\int_0^1 f \, dy \right)^2-|\dot{X}|^2 \le C \delta_S \int_0^1 |f|^2 \, dy,
	\end{align*}
	and by rearranging the terms, we obtain
	\begin{equation}\label{est-dotX}
	-\frac{\delta_S}{2M} |\dot{X}|^2 \le -M \delta_S \left(\int_0^1 f \, dy \right)^2 +C \delta_S \int_0^1 |f|^2 \, dy.
	\end{equation}
	
	\noindent $\bullet$ (Estimates of $\mathcal{B}_1$ and  $\mathcal{G}_2$):
	Recall that $\mathcal{B}_1$ and $\mathcal{G}_2$ are 
	\begin{align*}
	&\mathcal{B}_1:=\frac{1}{4C_*}\int_\mathbb{R} a^X_x |u-\tu^X|^2 \, d x, \\
	&\mathcal{G}_2:=\frac{\sigma}{2} \int_\mathbb{R} a^X_x |u-\tu^X|^2 \, d x.
	\end{align*}
	Therefore,
	\begin{align*}
	\mathcal{B}_1-\mathcal{G}_2&=\left(\frac{1}{4C_*}-\frac{\sigma}{2}\right)\int_{\R}a_x^X|u-\tu^X|^2\,dx=-\left(\frac{1}{4C_*}-\frac{\sigma}{2}\right)\frac{1}{\sqrt{\delta_S}}\int_{\R}\tu_x^X|u-\tu^X|^2\,dx\\
	&=\sqrt{\delta_S}\left(\frac{1}{4C_*}-\frac{\sigma}{2}\right)\int_0^1 f^2\,dy.
	\end{align*}
	where $C_*$ defined in \eqref{C_star} can be written as
	\begin{equation*}
	C_*= \frac{1}{2 \sigma_\ell} -(\sqrt{\delta_S}+\delta_S)\alpha_\ell \sigma_\ell,
	\end{equation*}
	with simple notation $\alpha_\ell=\frac{\gamma+1}{2 \gamma \sigma_\ell p(v_-)}$ in \eqref{O(1) constnats}. On the other hand, using \eqref{shock_speed_est}, \eqref{shock_speed_est-2}, we obtain
	\begin{align*}
	\sqrt{\delta_S}\left(\frac{1}{4C_*}-\frac{\sigma}{2}\right)&\le \frac{\sigma_\ell}{2} \frac{\sqrt{\delta_S}}{1-2 (\sqrt{\delta_S}+\delta_S) \sigma_\ell^2 \alpha_\ell}- \frac{\sigma }{2} \sqrt{\delta_S} 
	\\
	&\le \frac{\sqrt{\delta_S}}{2} \left( \frac{1}{1-2 (\sqrt{\delta_S}+\delta_S) \sigma_\ell^2 \alpha_\ell} (\sigma_\ell -\sigma) +\sigma \left(\frac{1}{1-2 (\sqrt{\delta_S}+\delta_S) \sigma_\ell^2 \alpha_\ell}-1\right) \right) \\
	&\le C  \delta_S^{3/2} +\delta_S\sigma_\ell^3 \alpha_\ell.
	\end{align*}

	Therefore, we have
	\begin{equation}\label{R1.2}
	\mathcal{B}_1-\mathcal{G}_2 \le   C\delta_S^{3/2} \int_0^1 f^2 \, dy+  \sigma_\ell^3 \alpha_\ell \delta_S \int_0^1 f^2 \, dy. 
	\end{equation}

	\noindent $\bullet$ (Estimate of $\mathcal{D}(U)$):
	First, using $a \geq 1$ and change of variables, we estimate the diffusion term $\mathcal{D}$ in terms of $f$:
	\begin{equation*}
	\mathcal{D} \geq \int_\R \frac{1}{v}|\partial_x (u-\tu^X)|^2 dx=\int_0^1 |\partial_y f|^2 \frac{1}{v} \left(\frac{dy}{d x} \right) dy.
	\end{equation*}
	
	Similar to the NS system \cite[Lemma 4.5]{KVW23}, there exists $C>0$ such that the following estimate holds:
	\begin{equation}\label{Diffusion}
	\left|   \frac{1}{y(1-y)} \frac{1}{\tv^X} \left( \frac{dy}{dx} \right)-\frac{\sigma}{2 \sigma_\ell}\frac{\delta_S v''(p_-)}{ |v'(p_-)|^2 } \right| \le C \delta_S^2.
	\end{equation}
	We present the proof of the estimate \eqref{Diffusion} in Appendix \ref{Diffusion proof}. On the other hand, since $C^{-1} \leq v \leq C$, we have
	\begin{equation}\label{estimate in diffusion}
	\left| \frac{\tv^X}{v}-1 \right| \leq C \left|\tv^X-v \right| 
	\leq C \varepsilon_1.
	\end{equation}
	Then, using \eqref{Diffusion} and \eqref{estimate in diffusion}, we obtain the lower bound for $\mathcal{D}$ as
	\begin{align*}
	\mathcal{D} &\geq \int_0^1 |\partial_y f|^2 \frac{\tv^X}{v} \frac{1}{\tv^X} \left(\frac{dy}{dx} \right) dy \\
	&\geq (1-C \varepsilon_1) \left(\frac{\sigma}{2 \sigma_\ell}\frac{\delta_S v''(p_-)}{ |v'(p_-)|^2 }-C \delta_S^2 \right) \int_0^1 y(1-y) \left| \partial_y f \right|^2 dy.
	\end{align*}
	Finally, since
	\begin{align*}
	\sigma_\ell^3 \alpha_\ell=\frac{1}{2} (1+\gamma) \frac{1}{v_-}=\frac{1}{2 }\frac{v''(p_-)}{|v'(p_-)|^2 },
	\end{align*}
	we obtain
	\begin{equation} \label{R1.3}
	\mathcal{D} \ge \sigma_\ell^3 \alpha_\ell \delta_S (1-C (\delta_0 + \varepsilon_1)) \int_0^1 y(1-y) |\partial_y f|^2 \, dy.
	\end{equation}
	
	\noindent $\bullet$ (Estimate of $\mathcal{R}_1$): We now combine the estimates \eqref{R1.2} and \eqref{R1.3}, we have
	\begin{align*}
	\mathcal{B}_1-\mathcal{G}_2-\frac{3}{4}\mathcal{D} \le \sigma_\ell^3 \alpha_\ell \delta_S \left(   (1+C \sqrt{\delta_S})\int_0^1 f^2 \, dy - \frac{3}{4}(1-C(\delta_0+\varepsilon_1)) \int_0^1 y(1-y) |\partial_y f|^2 \, dy \right),
	\end{align*}
	which together with the smallness of $\delta_0,\varepsilon_1$ yields
	\[\mathcal{B}_1-\mathcal{G}_2-\frac{3}{4}\mathcal{D} \le \sigma_\ell^3 \alpha_\ell \delta_S \left(   \frac{9}{8} \int_0^1 f^2 \, dy -\frac{5}{8}  \int_0^1 y(1-y) |\partial_y f|^2 \, dy \right). \] 
	Then, using Lemma \ref{lem: KV inequality}  with the identity
	\[\int_0^1 |f-\overline{f}|^2 \, dy=\int_0^1 f^2 \, dy-\overline{f}^2, \quad \overline{f}:=\int_0^1 f \, dy,\]
	we have
	\begin{align*}
	\mathcal{B}_1-\mathcal{G}_2-\frac{3}{4}\mathcal{D} \le -\frac{\sigma_\ell^3 \alpha_\ell \delta_S}{8} \int_0^1 f^2 \, dy + \frac{5 \sigma_\ell^3 \alpha_\ell \delta_S}{4} \left(\int_0^1 f\,dy\right)^2.
	\end{align*}
	Finally, using \eqref{est-dotX} with the choice $M=\dfrac{5 \sigma_\ell^3 \alpha_\ell}{4}$, we have
	\begin{align*}
	&-\frac{\delta_S}{2M}|\dot{X}|^2+\mathcal{B}_1-\mathcal{G}_2-\frac{3}{4}\mathcal{D} \le  -\frac{\sigma_\ell^3 \alpha_\ell \delta_S}{16} \int_0^1 f^2 \, dy,
	\end{align*}
	which implies 
	\begin{equation} \label{R1}
	\begin{aligned}
	\mathcal{R}_1 \le -C_1 \int_\mathbb{R} |\tv^X_x| |u-\tu^X|^2 \, dx=:-C_1G^S.
	\end{aligned}
	\end{equation}
	\subsection{Estimate of remaining terms} \label{Est-Remaining-terms}
	We now estimate the remaining terms in $\mathcal{R}_2$. We first substitute the estimate of $\mathcal{R}_1$ \eqref{R1} to \eqref{est} and use the Young's inequality
	\begin{equation*}
	\dot{X}\sum_{i=3}^7 Y_i \le \frac{\delta_S}{4M} |\dot{X}|^2 +\frac{C}{\delta_S}\sum_{i=3}^7 |Y_i|^2
	\end{equation*}
	to have
	\begin{equation}\label{est-R}
	\begin{aligned}{}
	\frac{d}{dt} \int_\mathbb{R} a \eta(U|\tU) \, dx \le -C_1 G^S -\frac{\delta_S}{4M} |\dot{X}|^2 +\frac{C}{\delta_S}\sum_{i=3}^7 |Y_i|^2 + \sum_{i=2}^6 \mathcal{B}_i + \sum_{i=1}^6 \mathcal{K}_i-\mathcal{G}_1 -\mathcal{G}_3 -\frac{1}{4} \mathcal{D}.
	\end{aligned}
	\end{equation}
	Therefore, to close the estimate of the weighted relative entropy, it suffices to control the remaining terms $|Y_i|^2$, $\mathcal{B}_i$, and $\mathcal{K}_i$. Below, we estimate each term separately.\\
	

	\noindent $\bullet$ (Estimate of $\frac{C}{\delta_S} |Y_i|^2$ for $i=3,\ldots,7$): We use Cauchy-Schwarz inequality to estimate $Y_3$ as
	\begin{align*}
	|Y_{3}|^2 &= \left|-\frac{1}{2}\int_{\mathbb{R}} a^X_x \Big( u-\tu^X -2C_* \big(p(v)-p(\tv^X) \big)\Big)\Big(u-\tu^X +2C_* \big(p(v)-p(\tv^X) \big)\Big) \, dx \right|^2 \\
	&\le C \left( \int_\mathbb{R} a^X_x \left|p(v)-p(\tv^X)-\frac{u-\tu^X}{2C_*} \right|^2 \, dx \right) \left( \int_\mathbb{R} a^X_x \left|p(v)-p(\tv^X)+\frac{u-\tu^X}{2C_*} \right|^2 \,dx \right) \\
	&\le C \mathcal{G}_1 \|a^X_x\|_{L^\infty}  \left( \|u-\tu^X\|_{L^\infty(0,T;L^2(\mathbb{R}))}^2 + \|v-\tv^X\|_{L^\infty(0,T;L^2(\mathbb{R}))}^2  \right) \\
	&\le C \delta^{3/2}_S \varepsilon_1^2 \mathcal{G}_1,
	\end{align*}
	where we used $\|\tv^X_x\|_{L^\infty}\le C\delta_S^2$ to obtain $\|a^X_x\|_{L^\infty}\le \frac{1}{\sqrt{\delta_S}}\|\tv^X_x\|_{L^\infty}\le C\delta_S^{3/2}$. This yields 
	\begin{equation*} 
	\frac{C}{\delta_S}|Y_3|^2 \le C \sqrt{\delta_S} \varepsilon_1^2 \mathcal{G}_1.
	\end{equation*}
	On the other hand, using $Q(v|\tv^X) \le C |p(v)-p(\tv^X)|$, we estimate $Y_4$ as 
	\begin{align*}
	|Y_{4}| & \le C \int_\mathbb{R} |a_x^X| |p(v)-p(\tv^X)|^2 \, dx,  
	\end{align*}
	which implies
	\begin{align*}
	\frac{C}{\delta_S } |Y_4|^2 &\le \frac{C}{\delta_S} \left( \int_\mathbb{R} |a^X_x| |p(v)-p(\tv^X)|^2  \, dx \right)^2 \\
	&\le \frac{C}{\delta_S} \left( \int_\mathbb{R} |a^X_x| \left|p(v)-p(\tv^X)-\frac{u-\tu^X}{2C_*}\right|^2 \, dx+\int_\mathbb{R} |a^X_x| |u-\tu^X|^2 \, dx \right)^2 \\
	&\le \frac{C}{\delta_S} \left( \int_\mathbb{R} |a^X_x| \left|p(v)-p(\tv^X)-\frac{u-\tu^X}{2C_*}\right|^2 \,dx \right)^2+ \frac{C}{\delta_S} \left(\int_\mathbb{R} |a^X_x| |u-\tu^X|^2 \, dx\right)^2 .
	\end{align*}
	Then, using the estimates $\|a^X_x\|_{L^\infty (\R)} \le C \delta^{3/2}_S$  and $\|v-\tv^X\|_{L^2(\R)}+\|u-\tu^X\|_{L^2(\R)} \le C \varepsilon_1$, we derive the following estimate for $Y_4$:
	\begin{align*}
	\begin{aligned} 
	\frac{C}{\delta_S} |Y_4|^2 &\le \frac{C}{\delta_S} \|a^X_x\|_{L^\infty(\mathbb{R})} \left(\|v-\tv^X \|_{L^2(\mathbb{R})}^2+\|u-\tu^X \|_{L^2(\mathbb{R})}^2 \right) \int_\mathbb{R} |a^X_x| \left|p(v)-p(\tv^X)-\frac{u-\tu^X}{2C_*}\right|^2 \,dx \\
	&\quad + \frac{C}{\delta_S^{3/2}} \|a^X_x\|_{L^\infty(\mathbb{R})} \|u-\tu^X\|_{L^2(\R)}^2 \int_\mathbb{R} |\tv^X_x| |u-\tu^X|^2 \, dx \\
	&\le  C \sqrt{\delta_S} \varepsilon_1^2 \mathcal{G}_1 + C \varepsilon_1^2 G^S.
	\end{aligned}
	\end{align*}
	To estimate $Y_5$, we first note that
	\begin{align*}
	Y_{5} &= - \int_\mathbb{R} a^Xp'(\tv^X)\tv^X_x \left( v-\tv^X+\frac{p(v)-p(\tv^X)}{\sigma\sigma_l}-\frac{2}{\sigma}\alpha_l\sigma_l(\sqrt{\delta_S}+\delta_S)(p(v)-p(\tv^X)) \right) \, dx. \\ 
	\end{align*}
	Since $v=p(v)^{-\frac{1}{\gamma}}$, we use Taylor expansion of the map $z\mapsto z^{-\frac{1}{\gamma}}$ to observe 
	\[\left| v-\widetilde{v}^X+\frac{p(v)-p(\widetilde{v}^X)}{\gamma p^{1+\frac{1}{\gamma}}(\widetilde{v}^X)}\right|\le C|p(v)-p(\widetilde{v}^X)|^2. \]
	This, together with \eqref{shock_speed_est} and \eqref{shock_speed_est-2}, implies
	\[\left| v-\widetilde{v}^X+\frac{p(v)-p(\widetilde{v}^X)}{\sigma\sigma_l}\right|\le C(\varepsilon_1+\delta_S)|p(v)-p(\widetilde{v}^X)|. \]
	Therefore, we estimate $Y_5$ as 
	\begin{align*}
	|Y_{5}| &\le C \left( \int_\mathbb{R} |\tv^X_x| \left| v-\widetilde{v}^X+\frac{p(v)-p(\widetilde{v}^X)}{\sigma\sigma_l} \right| \, dx + \sqrt{\delta_S}\int_\mathbb{R} |\tv^X_x| |p(v)-p(\tv^X)|  \, dx \right) \\
	&\le C (\varepsilon_1+\delta_S+\sqrt{\delta_S}) \int_\mathbb{R} |\tv^X_x| \left|p(v)-p(\widetilde{v}^X)\right| \,dx \\ 
	&\le C (\varepsilon_1+\sqrt{\delta_S}) \left(\int_\mathbb{R} |\tv^X_x| \left|p(v)-p(\widetilde{v}^X)-\frac{u-\widetilde{u}^X}{2C_\ast}\right| \, dx+\int_\mathbb{R} |\tv^X_x| \left|u-\widetilde{u}^X\right| \, dx\right) \\ 
	&\le C (\varepsilon_1+\sqrt{\delta_S}) \sqrt{\int_\mathbb{R} |\widetilde{v}^X_x| dx} \left( \sqrt{\int_{\mathbb{R}} |a^X_x| \left|p(v)-p(\tv^X)-\frac{u-\widetilde{u}^X}{2C_\ast}\right|^2  \, dx}+\sqrt{\int_{\mathbb{R}} |\widetilde{v}^X_x| \left|u-\widetilde{u}^X\right|^2  \, dx}\right) \\
	&\le C(\varepsilon_1+\sqrt{\delta_S})\sqrt{\delta_S}(\sqrt{\mathcal{G}_1} + \sqrt{G^S}),
	\end{align*}
	which yields
	\begin{align*}
	\frac{C}{\delta_S} |Y_5|^2 & \le  C (\varepsilon_1+\sqrt{\delta_S})^2 (\mathcal{G}_1 + G^S).
	\end{align*}
	For $Y_6$, we use H\"older inequality as
	\begin{align*}
	|Y_{6}| & \le C \int_\mathbb{R}  |\tv^X_x| \left| p(v)-p(\tv^X)-\frac{u-\tu^X}{2C_*} \right| \, d x\\
	&\le C \sqrt{\int_\mathbb{R} |\tv^X_x| \, dx} \sqrt{\int_\mathbb{R} |\tv^X_x| \left| p(v)-p(\tv^X)-\frac{u-\tu^X}{2C_*} \right|^2 \, dx},
	\end{align*}
	which gives
	\begin{align*}
	\frac{C}{\delta_S} |Y_6|^2 \le \frac{C}{\delta_S} \int_\mathbb{R} |\tv^X_x| \, dx \int_\mathbb{R} |\tv^X_x| \left| p(v)-p(\tv^X)-\frac{u-\tu^X}{2C_*} \right|^2 \, dx \le C \sqrt{\delta_S} \mathcal{G}_1.
	\end{align*}
	Finally, for $Y_7$, we use the relations
	\[|\tw^X_x| \le C \left( |\tv^X_{xx}| + |\tv^X_x| \right) \le C |\tv^X_x|,\quad |\tv^X_x| \sim \sqrt{\delta_S} |a^X_x|\]
	and H\"older's inequality to obtain
	\begin{align*}
	|Y_{7}| &\le C \left( \int_\mathbb{R} |a^X_x| |w-\tw^X|^2\,dx +  \int_\mathbb{R} |\tv^X_x| |w-\tw^X| \, dx \right) \\
	&\le C \left(  \int_\mathbb{R} |a^X_x| |w-\tw^X|^2\,dx +  \sqrt{\int_\mathbb{R} |\tv^X_x| \, dx} \sqrt{\int_\mathbb{R} |\tv^X_x| |w-\tw^X|^2 \, dx} \,  \right)\\
	&\le C \left(  \int_\mathbb{R} |a^X_x| |w-\tw^X|^2\,dx +  \sqrt{\int_\mathbb{R} |\tv^X_x| \, dx} \sqrt{ \sqrt{\delta_S} \int_\mathbb{R} |a^X_x| |w-\tw^X|^2 \, dx} \,  \right), 
	\end{align*}
	which gives 
	\begin{align*}
	\frac{C}{\delta_S} |Y_7|^2 \le \frac{C}{\delta_S} \left( \| a^X_x \|_{L^\infty(\R)} \|w-\tw^X\|_{L^2(\R)}  + \delta_S^{3/2} \right) \mathcal{G}_3 \le C \sqrt{\delta_S} \mathcal{G}_3 , 
	\end{align*}
	where the last inequality, we used $\| a^X_x \|_{L^\infty(\R)}<C\delta_S^{\frac{3}{2}}$ and $\|w-\tw^X\|_{L^2(\R)}^2 \le C \|(v-\tv^X)_x\|_{L^2(\R)}^2 \le C \varepsilon_1^2$. 
	
	Combining all the estimates of $Y_i$ with $i=3,4,\ldots,7$, and use the smallness of the parameters, we conclude 
	that
	\begin{equation}\label{est-Y}
	\begin{aligned}
	\frac{C}{\delta_S}\sum_{i=3}^7 |Y_i|^2 &\le C (\sqrt{\delta_S}+\e_1^2)\mathcal{G}_1 + C \sqrt{\delta_S} \mathcal{G}_3 + C  \left( \varepsilon_1^2 +\delta_S^2 \right)G^S \\
	&\le \frac{1}{100} (\mathcal{G}_1 + \mathcal{G}_3 + C_1 G^S),
	\end{aligned}
	\end{equation}
	where the coefficient $\frac{1}{100}$ is chosen as a small coefficient, but still order of $O(1)$.\\
	
	\noindent $\bullet$ (Estimate of  $\mathcal{B}_i(U)$ for $i=2,3,4,5,6$): We estimate $\mathcal{B}_2$ by using Young's inequality, $\|a^X_x\|_{L^\infty(\R)} \le C \delta^{3/2}_S$ and $|a^X_x| \sim \frac{|\tv^X_x|}{\sqrt{\delta_S}} $ as
	\begin{align*}
	|\mathcal{B}_2(U)| &\le C \int_\mathbb{R} |a_x^X| |u-\tu^X| |(u-\tu^X)_x| \, dx,\\
	&\le \frac{1}{100} \mathcal{D} + C \int_\mathbb{R} |a_x^X|^2 |u-\tu^X|^2 \, dx\\
	&\le \frac{1}{100} \mathcal{D} + C \delta_S \int_\mathbb{R} | \tv^X_x| |u-\tu^X|^2 \, dx \le \frac{1}{100} ( \mathcal{D}+C_1 G^S).
	\end{align*}
	To estimate $\mathcal{B}_3$, we use Young's inequality, $|\tu^X_x|\le C\delta_S^2$ and $|a^X_x| \sim \frac{|\tv^X_x|}{\sqrt{\delta_S}} $ to obtain
	\begin{align*}
	|\mathcal{B}_3(U)|  &\le C \delta_S^2 \int_\mathbb{R} |a^X_x| |u-\tu^X| |p(v)-p(\tv^X)| \, dx\\
	&\le C \delta_S^2 \left( \int_\mathbb{R} |a^X_x|  \left| p(v)-p(\tv^X)\right|^2 \, dx +  \int_\mathbb{R} |a^X_x| \left| u-\tu^X \right|^2  \, dx \right) \\
	&\le C \delta_S^2 \left( \int_\mathbb{R} |a^X_x|  \left| p(v)-p(\tv^X)-\frac{u-\tu^X}{2C_*} \right|^2 \, dx +  \int_\mathbb{R} |a^X_x| \left| u-\tu^X \right|^2  \, dx \right) \\
	&\le C \delta_S^2 \mathcal{G}_1 + C \delta^{3/2}_S G^S \le \frac{1}{100} ( \mathcal{G}_1+C_1 G^S).
	\end{align*}
	We use $|\tu^X_x| \sim |\tv^X_x|$, Young's inequality, $\|\tv^X_x\|_{L^\infty(\R)} \le C \delta_S^2$, and $|a^X_x| \sim \frac{|\tv^X_x|}{\sqrt{\delta_S}}$ to estimate $\mathcal{B}_4$ as
	\begin{align*}
	|\mathcal{B}_4(U)| &\le C \int_{\mathbb{R}}   |\tv^X_x| |(u-\tu^X)_x| |p(v)-p(\tv^X)| \, dx\\
	&\le C \int_\mathbb{R} |\tv^X_x|^{3/2} |p(v)-p(\tv^X)|^2 \, dx + \int_\mathbb{R} |\tv^X_x|^{1/2} |(u-\tu^X)_x|^2 \, dx\\
	&\le C \delta_S \left( \int_{\mathbb{R}} |\tv^X_x|  \left| p(v)-p(\tv^X)-\frac{u-\tu^X}{2C_*} \right|^2 \, dx + \int_\mathbb{R} |\tv^X_x| |u-\tu^X|^2 \, dx +  \int_\mathbb{R}  |(u-\tu^X)_x |^2 \, dx \right)\\
	&\le C \delta_S (\mathcal{G}_1 + G^S + \mathcal{D}) \le \frac{1}{100} \left( \mathcal{D} +C_1 G^S + \mathcal{G}_1 \right).
	\end{align*}
	We estimate $\mathcal{B}_5$ by using similar argument as before
	\begin{align*}
	|\mathcal{B}_5(U)|&\le C\delta_S\int_{\mathbb{R}} |a_x^X| \left|p(v)-p(\widetilde{v}^X)-\frac{u-\widetilde{u}^X}{2C_\ast}\right|^2 \,dx+C\sqrt{\delta_S} \int_{\mathbb{R}} |\tv_x^X| |u-\widetilde{u}^X|^2 \,dx \\
	&\le \frac{1}{100}\left(\mathcal{G}_1+C_1G^S\right).
	\end{align*}
	Finally, to estimate $\mathcal{B}_6$, we use
	\[|p|^3\le 8(|p-q|^3+|q|^3),\quad \mbox{for any}\quad p,q\in \mathbb{R}\]
	and the interpolation inequality to obtain
	\begin{align*}
	|\mathcal{B}_6(U)|&\le C\int_{\mathbb{R}} |a_x^X| \left|p(v)-p(\widetilde{v}^X)-\frac{u-\widetilde{u}^X}{2C_\ast}\right|^3dx+C\int_{\mathbb{R}} |a^X_x| \left|u-\widetilde{u}^X\right|^3dx \\
	&\le C\varepsilon_1\mathcal{G}_1+C\lVert u-\widetilde{u}^X \rVert_{L^\infty{(\mathbb{R})}}^2 \int_{\mathbb{R}} |a_x^X||u-\widetilde{u}^X| \,dx\\
	&\le C\varepsilon_1\mathcal{G}_1+C\lVert u-\widetilde{u}^X \rVert_{L^2{(\mathbb{R})}}\lVert (u-\widetilde{u}^X)_x \rVert_{L^2{(\mathbb{R})}}\int_{\mathbb{R}}\frac{|\widetilde{v}_x^X|}{\sqrt{\delta_S}}|u-\widetilde{u}^X|dx \\
	&\le C\varepsilon_1\mathcal{G}_1+\frac{1}{100}\mathcal{D}+\frac{C}{\delta_S}\lVert u-\widetilde{u}^X \rVert_{L^2{(\mathbb{R})}}^2 \int_{\mathbb{R}} |\widetilde{v}_x^X| dx\int_{\mathbb{R}} |\widetilde{v}_x^X| |u-\tu^X|^2 dx \\
	&\le C\varepsilon_1\mathcal{G}_1+\frac{1}{100}\mathcal{D}+C\varepsilon_1^2G^S\le \frac{1}{100}(\mathcal{G}_1+C_1G^S+\mathcal{D}). 
	\end{align*}

	\noindent $\bullet$(Estimate of $\mathcal{K}_i$ for $i=1,\ldots,6$): Finally, we control the terms $\mathcal{K}_i$. We estimate $\mathcal{K}_1$ by using $|a^X_x| \sim \frac{|\tv^X_x|}{\sqrt{\delta_S}} $ and Young's inequality as
	\begin{align*}
	|\mathcal{K}_1(U)| &\le C \int_\mathbb{R} |a^X_x| |u-\tu^X| |(w-\tw^X)_x| \, dx \\
	&\le \frac{C}{\sqrt{\delta_S}} \int_\mathbb{R} |\tv^X_x| |u-\tu^X| |(w-\tw^X)_x| \, dx \\
	&\le \frac{C}{\sqrt{\delta_S}} \left( \int_\mathbb{R} |\tv^X_x|^{3/2} |u-\tu^X|^2 \, dx + \int_\mathbb{R} |\tv^X_x|^{1/2} |(w-\tw^X)_x|^2 \, dx \right) \\
	&\le \frac{C}{\sqrt{\delta_S}} \left( \|\tv^X_x\|_{L^\infty(\R)}^{1/2} G^S + \|\tv^X_x\|_{L^\infty(\R)}^{1/2} \|(w-\tw^X)_x\|_{L^2(\R)}^2 \right) \le C\sqrt{\delta_S} (G^S + \|(w-\tw^X)_x\|_{L^2(\R)}^2).
	\end{align*}
	For $\mathcal{K}_2$, we use Young's inequality and $\|a^X_x\|_{L^\infty(\R)} \le C \delta_S^{3/2} $ to find
	\begin{align*}
	|\mathcal{K}_2(U)| &\le C \int_\mathbb{R} |a^X_x| |w-\tw^X| |(u-\tu^X)_x| \, dx \\
	&\le C \left( \int_\mathbb{R} |a^X_x|^{3/2} |w-\tw^X|^2 \, dx + \int_\mathbb{R} |a^X_x|^{1/2} |(u-\tu^X)_x|^2 \, dx \right) \\
	&\le C \left(\|a^X_x\|_{L^\infty(\R)}^{1/2} \mathcal{G}_3 +\|a^X_x\|_{L^\infty(\R)}^{1/2} \mathcal{D} \right) \le C \delta_S^{3/4} (\mathcal{G}_3 + \mathcal{D}).
	\end{align*}
	The term $\mathcal{K}_3$ can be bounded by using  $|\tw^X_x| \le C \left( |\tv^X_{xx}| + |\tv^X_x| \right) \le C |\tv^X_x| $ and Young's inequality as
	\begin{align*}
	|\mathcal{K}_3(U)| &\le C \int_\mathbb{R} |a^X_x| |v-\tv^X| |u-\tu^X| |\tv^X_x| \, dx \\
	&\le C \delta_S^2 \left( \int_\mathbb{R} |a^X_x|  \left| p(v)-p(\tv^X) \right|^2 \, dx +  \int_\mathbb{R} |a^X_x| \left| u-\tu^X \right|^2  \, dx \right) \\
	&\le C \delta_S^2 \left( \int_\mathbb{R} |a^X_x|  \left| p(v)-p(\tv^X)-\frac{u-\tu^X}{2C_*} \right|^2 \, dx +  \int_\mathbb{R} |a^X_x| \left| u-\tu^X \right|^2  \, dx \right) \\
	&\le C \delta_S^2 \mathcal{G}_1 + C  \delta_S^{3/2} G^S.
	\end{align*}
	Similarly, $\mathcal{K}_4$ is bounded by using $|\tu^X_x| \sim |\tv^X_x|$ and Young's inequality as
	\begin{align*}
	|\mathcal{K}_4(U)| &\le C \int_\mathbb{R} |a^X_x| |v-\tv^X| |w-\tw^X| |\tv^X_x| \, dx \\
	&\le C \delta_S^2\left( \int_\mathbb{R} |a^X_x| |p(v)-p(\tv^X)|^2 \, dx + \int_\mathbb{R} |a^X_x| |w-\tw^X|^2 \, dx\right) \\
	&\le C \delta_S^2 \left( \int_\mathbb{R} |a^X_x| \left|p(v)-p(\tv^X) - \frac{u-\tu^X}{2 C_* } \right|^2 \, dx + \int_\mathbb{R} |a^X_x| |u-\tu^X|^2 \, dx + \int_\mathbb{R} |a^X_x| |w-\tw^X |^2 \, dx\right) \\
	&\le C \left( \delta_S^2 \mathcal{G}_1 + \delta_S^{3/2} G^S + \delta_S^2 \mathcal{G}_3 \right)\le C\delta_S^{3/2}(\mathcal{G}_1+G^S+\mathcal{G}_3).
	\end{align*}
	We estimate $\mathcal{K}_5$ by using $|\tw^X_x| \le C \left( |\tv^X_{xx}| + |\tv^X_x| \right) \le C |\tv^X_x| $ as
	\begin{align*}
	|\mathcal{K}_5 (U)| &\le C \int_\mathbb{R} |\tv^X_x| |(u-\tu^X)_x| |v-\tv^X| \, dx \\
	&\le C \int_\mathbb{R} |\tv^X_x|^{3/2} |p(v)-p(\tv^X)|^2 \, dx + \int_\mathbb{R} |\tv^X_x|^{1/2} |(u-\tu^X)_x|^2 \, dx\\
	&\le C \delta_S \left( \int_{\mathbb{R}} |\tv^X_x|  \left| p(v)-p(\tv^X)-\frac{u-\tu^X}{2C_*} \right|^2 \, dx + \int_\mathbb{R} |\tv^X_x| |u-\tu^X|^2 \, dx +  \int_\mathbb{R}  |(u-\tu^X)_x |^2 \, dx \right)\\
	&\le C \delta_S (\mathcal{G}_1 + G^S + \mathcal{D}).
	\end{align*}
	Finally, we bound $\mathcal{K}_6$ by using $|\tu^X_x| \sim |\tv^X_x|$ as
	\begin{align*}
	|\mathcal{K}_6 (U)| &\le C \int_\mathbb{R} |\tv^X_x| |(w-\tw^X)_x| |v-\tv^X| \, dx \\
	&\le C \int_\mathbb{R} |\tv^X_x|^{3/2} |p(v)-p(\tv^X)|^2 \, dx + \int_\mathbb{R} |\tv^X_x|^{1/2} |(w-\tw^X)_x|^2 \, dx\\
	&\le C \delta_S \left( \int_{\mathbb{R}} |\tv^X_x|  \left| p(v)-p(\tv^X)-\frac{u-\tu^X}{2C_*} \right|^2 \, dx + \int_\mathbb{R} |\tv^X_x| |u-\tu^X|^2 \, dx +  \|(w-\tw^X)_x\|_{L^2 (\R)}^2 \right)\\
	&\le C \delta_S \left(\mathcal{G}_1 + G^S + \|(w-\tw^X)_x\|_{L^2(\R)}^2 \right) .
	\end{align*}
	Combining all the estimates for $\mathcal{B}_i$ $(i=2,\ldots,6)$ and for $\mathcal{K}_i$ $(i=1,\ldots,6)$ and using the smallness of the parameters, we conclude that
	\begin{equation}\label{est-B,K}
	\begin{aligned}
	\sum_{i=2}^6 \mathcal{B}_i + \sum_{i=1}^6 \mathcal{K}_i &\le \left(\frac{4}{100}+C \left(\delta_S^2 + \delta_S \right)\right) \mathcal{G}_1 + C_1 \left(\frac{5}{100} + C( \delta_S^{3/2} +\delta_S +\sqrt{\delta_S} ) \right) G^S +C \left( \delta_S^{3/4} + \delta_S^2 \right) \mathcal{G}_3\\ 
	&\quad  +  \left(  \frac{3}{100} +C\left(\delta_S + \delta_S^{3/4}\right) \right) \mathcal{D} + C \left( \sqrt{\delta_S} + \delta_S \right) \| (w-\tw^X)_x \|_{L^2(\R)}^2 \\ 
	&\le \frac{6}{100} \left( \mathcal{G}_1 + \mathcal{G}_3 +  C_1 G^S +  \mathcal{D}\right) + C\sqrt{\delta_S} \|(w-\tw^X)_x\|_{L^2(\mathbb{R})}^2. 
	\end{aligned}
	\end{equation}
	
	\subsection{Proof of Lemma \ref{Main Lemma}}
	We are now ready to prove the key lemma, Lemma \ref{Main Lemma}. We combine all the estimates in \eqref{est-R}, \eqref{est-Y}, and \eqref{est-B,K} to derive the following control on the weighted relative entropy:
	\begin{align*}
	\frac{d}{dt} \int_\mathbb{R} a^X \eta (U | \tU^X ) \, dx &\le -\frac{\delta_S}{4 M} |\dot{X}|^2 -\frac{93}{100} \mathcal{G}_1 -\frac{93}{100} \mathcal{G}_3 \\ 
	&\quad -\frac{93}{100}C_1 G^S-\frac{19}{100} \mathcal{D} + C \sqrt{\delta_S} \| (w-\tw^X)_x\|_{L^2(\R)}^2.
	\end{align*}
	After integrating the above inequality on $[0,t]$ for any $t \le T$, we conclude that
	\begin{align*}
	&\int_\mathbb{R} a^X(t,x) \eta (U (t,x) | \tU^X (t,x) ) \, dx + \int_0^t \left( \delta_S |\dot{X}|^2 + \mathcal{G}_1+ \mathcal{G}_3 + G^S+ \mathcal{D} \right) \, ds \\ 
	& \quad \le C \left( \int_\mathbb{R} a(0,x) \eta ( U_0(x)| \tU(0,x)) \, dx + \sqrt{\delta_S} \int_0^t \|(w-\tw^X)_x\|_{L^2(\R)}^2 \right).
	\end{align*}
	However, since the bounds $1/2 \le a \le 2$, $D(U) \le C \mathcal{D}(U)$, $G_1(U) \le C \mathcal{G}_1(U)$, $G_3(U) \le C \mathcal{G}_3(U)$, and the relation
	\begin{equation*}
	\|U-\tU^X\|_{L^2(\mathbb{R})}^2 \sim \int_\mathbb{R} \eta(U|\tU^X) \, dx, \quad \forall t \in [0,T]
	\end{equation*} 
	holds, this completes the proof of Lemma \ref{Main Lemma}.\\
	
	\begin{remark}
		In the following section, we omit the dependency on the shift $X$ in \eqref{viscous-dispersive-shock-ext} and \eqref{a} for the simplicity of notation as follows:
		\begin{equation*}
		(\tv,\tu,\tw)(t,x):=(\tv,\tu,\tw)(x-\sigma t -X(t)),\quad a(t,x):=a(x-\sigma t - X(t)).
		\end{equation*}
		
	\end{remark}
	
	\section{Estimate on the $H^1$-perturbation}\label{sec:high-order}
	\setcounter{equation}{0}
	In this section, we provide the estimate on the $H^1$-perturbation, and complete the proof of Proposition \ref{apriori-estimate}. To achieve this, in addition to the estimate of the $L^2$-perturbation between $(v,u,w)$ and $(\widetilde{v},\widetilde{u},\widetilde{w})$ obtained in the previous section, we need to derive higher-order estimates for the $H^1$-perturbation between $(u,w)$ and $(\widetilde{u},\widetilde{w})$. Therefore, our goal of this section is to establish the following lemma.
	\begin{lemma} \label{Main Lemma 2} Under the hypotheses of Proposition \ref{apriori-estimate}, there exists a positive constant $C$, independent of $\delta_S,\varepsilon_1,T$ such that for all $t \in [0,T],$
		\begin{equation}
		\begin{aligned}\label{eq:H1-est}
		& \norm{(v-\tv)(t,\cdot)}_{L^2(\mathbb{R})}^2 +\norm{(u-\tu)(t,\cdot)}_{H^1(\mathbb{R})}^2+\norm{(w-\tw)(t,\cdot)}_{H^1(\mathbb{R})}^2+\delta_S \int_0^t | \dot{X}(s)|^2 \, d s \\ 
		&\quad +\int_0^t \left( G_1+G_3+G^S \right) \, ds +  \int_0^t \left( D_{u_1} + D_{u_2} + G_{w} + D_{w_1} + D_{w_2} \right)\, ds  \\ 
		& \le C \norm{(v-\tv)(0,\cdot)}_{L^2(\mathbb{R})}^2 +\norm{(u-\tu)(0,\cdot)}_{H^1(\mathbb{R})}^2+\norm{(w-\tw)(0,\cdot)}_{H^1(\mathbb{R})}^2,
		\end{aligned}
		\end{equation}
		or equivalently,
		\begin{equation*}
		\begin{aligned}
		& \norm{(v-\tv)(t,\cdot)}_{H^2 (\mathbb{R})}^2 +\norm{(u-\tu)(t,\cdot)}_{H^1(\mathbb{R})}^2+\delta_S \int_0^t | \dot{X}(s)|^2 \, d s \\ 
		&\quad +\int_0^t \left( G_1+G_3+G^S \right) \, ds +  \int_0^t \left( D_{u_1} + D_{u_2} + G_{w} + G_{w_1} + G_{w_2} \right)\, ds  \\ 
		& \le C \norm{(v-\tv)(0,\cdot)}_{H^2 (\mathbb{R})}^2 +\norm{(u-\tu)(0,\cdot)}_{H^1(\mathbb{R})}^2 ,
		\end{aligned}
		\end{equation*}
		where
		\begin{align*}
		G_1 &:= \int_\mathbb{R} |a_x| \left| p(v)-p(\tv) - \frac{u-\tu}{2C_*} \right|^2 \, dx,\\
		G_3 &:= \int_\mathbb{R} |a_x| |w-\tw|^2 \, dx,\\
		G^S &:= \int_\mathbb{R} |\tv_x| |u-\tu|^2 \, dx,\\
		D_{u_1} &:= \int_\mathbb{R} |\partial_x(u-\tu)|^2 \, dx,\quad D_{u_2} := \int_\mathbb{R} |\partial_{xx}(u-\tu)|^2 \, dx, \\
		G_{w} &:= \int_\mathbb{R} |(w-\tw)|^2 \, dx,\quad D_{w_1} := \int_\mathbb{R} |\partial_x(w-\tw)|^2 \, dx,\quad D_{w_2} := \int_\mathbb{R} |\partial_{xx}(w-\tw)|^2 \, dx. 
		\end{align*}
	\end{lemma}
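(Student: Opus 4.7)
The plan is to upgrade the $L^2$ estimate of Lemma \ref{Main Lemma} to an $H^1$ bound for $(u-\tu,w-\tw)$ by deriving higher-order energy identities for the perturbation system \eqref{eq:diff} and combining them with suitable algebraic consequences of the momentum equation. Writing $g=u-\tu$, $k=w-\tw$, $h=v-\tv$, I plan to use four identities: test the $g$-equation against $-g_{xx}$, test the $k$-equation against $-k_{xx}$, extract an algebraic bound for $\|k_{xx}\|_{L^2}$ from the momentum equation, and recover $\int_0^t\|h_x\|_{L^2}^2\,ds$ by testing the $g$-equation against $-h_x$.

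Testing the $g$-equation against $-g_{xx}$ produces $\tfrac{1}{2}\tfrac{d}{dt}\|g_x\|_{L^2}^2+\int g_{xx}^2/v\,dx$ (so $D_{u_2}$ is controlled on the left), a pressure contribution $-\int(p(v)-p(\tv))_x g_{xx}\,dx$ handled by the same quadratic split as in Section \ref{sec:rel_ent} and bounded by $C(G_1+G^S)+\tfrac{1}{100}D_{u_2}$, a Korteweg cross term $-\int(k_x/v^{5/2})_x g_{xx}\,dx$, a shift term $\dot X\int\tu_x g_{xx}\,dx$ absorbed into $\delta_S|\dot X|^2+\tfrac{1}{100}D_{u_2}$, and nonlinear remainders carrying a prefactor $\delta_S$ or $\e_1$. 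Testing the $k$-equation against $-k_{xx}$ then yields $\tfrac{1}{2}\tfrac{d}{dt}\|k_x\|_{L^2}^2$ together with $+\int(g_x/v^{5/2})_x k_{xx}\,dx$, which after one integration by parts \emph{exactly cancels} the Korteweg cross term from the $g$-identity. This cancellation is the raison d'\^etre of the augmented variable $w$ introduced in Section \ref{sec:prelim}: it reflects the antisymmetric off-diagonal block of $M(U)$ and is essential because neither $\|g_x\|_{L^2}^2$ nor $\|k_x\|_{L^2}^2$ alone carries the needed dissipation.

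Because the $k$-equation has no intrinsic parabolic dissipation, the time-integrated control of $G_w$ and $D_{w_2}$ must come from elsewhere. For $\|k_{xx}\|_{L^2}^2$ I use the momentum equation as the algebraic identity $(k_x/v^{5/2})_x=g_t+(p(v)-p(\tv))_x-(g_x/v-\tu_x/\tv)_x+\ldots-\dot X\tu_x$, which gives $\|k_{xx}\|^2\lesssim\|g_t\|^2+\|h\|_{H^1}^2+\|g_{xx}\|^2+|\dot X|^2\|\tu_x\|_{L^2}^2$; the parallel bound $\|g_t\|^2\lesssim\|h\|_{H^1}^2+\|g_{xx}\|^2+\|k_x\|_{H^1}^2+|\dot X|^2$ read off the $g$-equation lets me absorb the $\|k_{xx}\|^2$ on the right using the smallness of $\e_1$. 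For $G_w=\|k\|_{L^2}^2$ I use the pointwise relation $k=-h_x/v^{5/2}+\tv_x(\tv^{-5/2}-v^{-5/2})$ to reduce matters to controlling $\int_0^t\|h_x\|_{L^2}^2\,ds$, and obtain the latter by testing the $g$-equation against $-h_x$ and integrating by parts in time via $h_t=g_x+\dot X\tv_x$: the leading pressure contribution becomes $\int |p'(\tv)|h_x^2$ with the \textbf{favorable} sign coming from $p'<0$, while all other terms are absorbed into $\sup_t(\|g\|^2+\|h\|^2)+\int_0^t(D_{u_1}+G_1+G^S+\delta_S|\dot X|^2)$.

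Summing the four resulting identities with appropriate weights, adding the $L^2$ estimate of Lemma \ref{Main Lemma} (whose residual $C\sqrt{\delta_S}\int_0^t\|k_x\|_{L^2}^2\,ds$ is absorbed by $D_{w_1}$ using the smallness of $\delta_S$), and invoking Gronwall where needed, closes \eqref{eq:H1-est}. The hardest part will be the simultaneous high-order coupling between $g$ and $k$: the antisymmetric cancellation in the first-order identity, together with the algebraic recovery of $\|k_{xx}\|$ from the momentum equation and the favorable pressure sign behind $G_w$, are the new ingredients beyond the classical Navier--Stokes analysis, and bookkeeping the shift-induced, shock-profile and cubic remainders so that every residual carries a prefactor $\delta_S$ or $\e_1$ requires careful Young-type splitting to ensure absorption into the good quantities on the left-hand side of \eqref{eq:H1-est}.
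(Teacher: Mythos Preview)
Your first two identities (testing $g$ against $-g_{xx}$ and $k$ against $-k_{xx}$, then exploiting the antisymmetric cancellation of the leading Korteweg cross terms) match the paper's Lemma \ref{lem: est-0} and correctly yield control of $\|(\psi_x,\omega_x)\|_{L^2}^2$ and $\int_0^t D_{u_2}$. The problem is downstream, in your recovery of $D_{w_1}$ and $D_{w_2}$.

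The algebraic step is circular. From the momentum equation you write $(k_x/v^{5/2})_x=g_t+\cdots$, hence $\|k_{xx}\|^2\lesssim\|g_t\|^2+\cdots$; but the only available bound on $\|g_t\|^2$ is the \emph{same} momentum equation read the other way, which returns $\|k_{xx}\|^2$ with an $O(1)$ coefficient, not an $\e_1$- or $\delta_S$-small one. There is no absorption here. Likewise, none of your four identities produces $\int_0^t D_{w_1}$ as a good term on the left, so the residual $C\sqrt{\delta_S}\int_0^t\|k_x\|_{L^2}^2$ from Lemma \ref{Main Lemma} has nowhere to go. Your $-h_x$ test gives $\int_0^t\|h_x\|_{L^2}^2\sim\int_0^t G_w$, but not $\int_0^t\|h_{xx}\|_{L^2}^2\sim\int_0^t D_{w_1}$.

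The paper closes this gap differently: instead of an algebraic bound, it \emph{cross-tests} the $\psi$-equation against $\omega$ and the $\omega$-equation against $\psi$ (Lemma \ref{lem: est-1}), and then the $x$-differentiated $\psi$-equation against $\omega_x$ and the $x$-differentiated $\omega$-equation against $\psi_x$ (Lemma \ref{lem: est-2}). The point is that $(p(v)-p(\tv))_x=p'(v)v^{5/2}\omega+\text{l.o.t.}$, so testing against $\omega$ (resp.\ $\omega_x$) produces the genuinely good term $-\int p'(v)v^{5/2}\omega^2>0$ (resp.\ $-\int p'(v)v^{5/2}\omega_x^2>0$), while the Korteweg term in the $\psi$-equation, after one integration by parts, yields $\int\omega_x^2/v^{5/2}$ (resp.\ $\int\omega_{xx}^2/v^{5/2}$). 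These are the sources of $G_w,D_{w_1}$ and $D_{w_1},D_{w_2}$ respectively. The resulting cross terms $\int\psi\omega$ and $\int\psi_x\omega_x$ on the left are not sign-definite but are harmless since they are bounded by quantities already controlled. You should replace your algebraic step by this cross-testing mechanism; the favorable pressure sign you invoke for $G_w$ is exactly the right idea, but it must be exploited through $\omega$ (and $\omega_x$), not through $h_x$, to simultaneously generate the Korteweg dissipation $\int\omega_x^2/v^{5/2}$ and $\int\omega_{xx}^2/v^{5/2}$.
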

	For simplicity, we introduce the following notations for the perturbation:
	\begin{equation} \label{Notations: phi,psi,omega}
	\phi(t,x):=v(t,x)-\tv(t,x),\ \psi(t,x):=u(t,x)-\tu(t,x),\ \omega (t,x):=w(t,x)-\tw(t,x).
	\end{equation}
	Then the triplet $(\phi,\psi,\omega)$ satisfies 
	\begin{equation} \label{Underlying wave with shift}
	\begin{aligned}
	&\phi_t-\psi_x=\dot{X} \tv_x,\\
	&\psi_t + \left(p(v)-p(\tv) \right)_x = \left( \frac{u_x}{v}-\frac{\tu_x}{\tv} \right)_x + \left( \frac{w_x}{v^{5/2}} - \frac{\tw_x}{\tv^{5/2}} \right)_x + \dot{X} \tu_x,\\
	&\omega _t= -\left( \frac{u_x}{v^{5/2}} - \frac{\tu_x}{\tv^{5/2}} \right)_x + \dot{X} \tw_x.
	\end{aligned}
	\end{equation}
	For later reference, we will represent some of the nonlinear terms in the above systems \eqref{Underlying wave with shift} as follows:
	\begin{equation} \label{eq: P,F_1,F_2}
	\begin{aligned}
	&\left( p(v)-p(\tv) \right)_x =p'(v) \phi_x + \tv_x \left( p'(v) - p'(\tv) \right), \\ 
	&\left( \frac{u_x}{v}-\frac{\tu_x}{\tv} \right)_x = \frac{\psi_{xx}}{v}-\frac{\psi_x v_x}{v^2}+\tu_{xx} \left( \frac{1}{v}- \frac{1}{\tv} \right) + \tu_x \left( \frac{\tv_x}{\tv^2}-\frac{v_x}{v^2} \right),\\
	&\left( \frac{w_x}{v^{5/2}} - \frac{\tw_x}{\tv^{5/2}} \right)_x = v^{-5/2} \omega _{xx} -\frac{5}{2} v^{-7/2} v_x \omega _x \\
	&\hspace{4cm}+ \tw_{xx} \left( v^{-5/2} - \tv^{-5/2} \right) +\frac{5}{2} \tw_x \left(  \tv^{-7/2} \tv_x -v^{-7/2} v_x \right). 
	\end{aligned}
	\end{equation}
	We note that the a priori assumptions \eqref{smallness} and the Sobolev inequality imply the control on the $L^\infty$-norms for the perturbations
	\begin{equation} \label{eq: L^infty bounds}
	\norm{\phi}_{L^\infty((0,T)\times\R)}+\norm{\phi_x}_{L^\infty((0,T)\times\R)}+\norm{\psi}_{L^\infty((0,T)\times\R)} \le C \varepsilon_1.
	\end{equation}
	The following lemma will be used throughout this section.
	\begin{lemma} \label{lem: Useful bad terms estimates} Under the hypotheses of Proposition \ref{apriori-estimate}, there exists a positive constants $C$ that is independent of $\delta_S$ and $\varepsilon_1$, such that for all $t \in [0,T],$ and for $p>0$
		\begin{equation*} 
		\begin{aligned}
		&\int_0^t\int_\mathbb{R} (|\tv_x|+|\tu_x|)^{1+p} \left|u-\tu\right|^2 \, dx d s \le C \delta_S^{2p} \int_0^t G^S \, d s, \\ 
		&\int_0^t\int_\mathbb{R} (|\tv_x|+|\tu_x|)^{1+p} \left|v-\tv\right|^2 \, dx d s  \le C \delta_S^{2p} \int_0^t \left( G_1 +G^S \right) d s.
		\end{aligned}
		\end{equation*}
	\end{lemma}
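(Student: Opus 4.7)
The plan is to reduce both integrands, uniformly in $(t,x)$, to expressions that are already controlled by the good terms $G^S$ and $G_1$ in the $L^2$-estimate, by extracting the small factor $\delta_S^{2p}$ from the pointwise smallness of $|\tv_x|$ and $|\tu_x|$. Throughout, I use the shock profile bounds from Lemma~\ref{lem:shock-property}, namely $|\tu_x|\sim |\tv_x|$ and $\|\tv_x\|_{L^\infty(\R)}\le C\delta_S^2$, together with the relation $|\tv_x|\sim \sqrt{\delta_S}\,|a_x|$ from \eqref{a_x}.

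For the first estimate, I would factor
\[
(|\tv_x|+|\tu_x|)^{1+p}\,|u-\tu|^2 \;\le\; C\,|\tv_x|^{p}\cdot |\tv_x|\,|u-\tu|^2,
\]
using $|\tu_x|\sim|\tv_x|$. Pulling out $\|\tv_x\|_{L^\infty}^p\le C\delta_S^{2p}$ and integrating against the remaining factor yields the desired bound
\[
\int_0^t\!\!\int_\R (|\tv_x|+|\tu_x|)^{1+p}|u-\tu|^2\,dx\,ds\;\le\; C\delta_S^{2p}\int_0^t G^S\,ds.
\]

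For the second estimate, the $|v-\tv|^2$ factor must first be converted to pressure differences, since the only good term controlling $v$-differences with a weight is $G_1$. By Lemma~\ref{lem : Estimate-relative}(2), in the small perturbation regime $|v-\tv|^2\le C|p(v)-p(\tv)|^2$. I then split
\[
|p(v)-p(\tv)|^2 \;\le\; 2\Bigl|p(v)-p(\tv)-\tfrac{u-\tu}{2C_*}\Bigr|^2 + C|u-\tu|^2,
\]
so that
\[
(|\tv_x|+|\tu_x|)^{1+p}|v-\tv|^2 \le C|\tv_x|^{p}\Bigl(|\tv_x|\bigl|p(v)-p(\tv)-\tfrac{u-\tu}{2C_*}\bigr|^2 + |\tv_x||u-\tu|^2\Bigr).
\]
Using $|\tv_x|\sim \sqrt{\delta_S}\,|a_x|$ on the first summand, it becomes $C\sqrt{\delta_S}\,|a_x|\bigl|p(v)-p(\tv)-\tfrac{u-\tu}{2C_*}\bigr|^2$, whose $x$-integral is $C\sqrt{\delta_S}\,G_1$. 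The second summand integrates to $G^S$. Pulling out $\|\tv_x\|_{L^\infty}^p\le C\delta_S^{2p}$ and absorbing $\sqrt{\delta_S}\le 1$ into the constant gives
\[
\int_0^t\!\!\int_\R (|\tv_x|+|\tu_x|)^{1+p}|v-\tv|^2\,dx\,ds \;\le\; C\delta_S^{2p}\int_0^t(G_1+G^S)\,ds.
\]

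There is no real obstacle here: the argument is a bookkeeping lemma whose only subtlety is the substitution $|p(v)-p(\tv)|\leftrightarrow |v-\tv|$ (justified by the uniform bounds on $v,\tv$ in the small perturbation regime) and the correct identification $|\tv_x|\sim\sqrt{\delta_S}|a_x|$, which is what makes $|\tv_x|$-weighted pressure differences controllable by $G_1$ at the cost of only a harmless $\sqrt{\delta_S}$ factor.
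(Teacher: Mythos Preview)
Your proposal is correct and follows essentially the same route as the paper: extract $\|\tv_x\|_{L^\infty}^p\le C\delta_S^{2p}$ from the weight, then for the $v$-estimate convert $|v-\tv|$ to $|p(v)-p(\tv)|$, split off $\frac{u-\tu}{2C_*}$, and bound the two pieces by $G_1$ and $G^S$ using $|\tv_x|\lesssim |a_x|$. One small remark: Lemma~\ref{lem : Estimate-relative}(2) actually states the reverse inequality $|p(v)-p(\bar v)|\le C|v-\bar v|$; the bound you need, $|v-\tv|\le C|p(v)-p(\tv)|$, follows instead from $|p'|$ being bounded below on the range of $v,\tv$ (or equivalently from the Lipschitz continuity of $p\mapsto p^{-1/\gamma}$), which is what the paper invokes as the ``equivalence'' of the two quantities.
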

	\begin{proof} 
		It follows from \eqref{shock-property} that $|\tv_x|\sim |\tu_x|$, and $\|\tv_x\|_{L^\infty}\le C\delta_S^2$, which implies
		\begin{align*}
		&\int_0^t\int_\mathbb{R} (|\tv_x|+|\tu_x|)^{1+p} \left|u-\tu\right|^2 \, dx d s \le C \norm{\tv_x}_{L^\infty((0,T)\times \mathbb{R})}^p \int_0^t  G^S \,  ds  \le C\delta_S^{2p} \int_0^t  G^S \,  ds.
		\end{align*}
		Similarly, we use the equivalence between $|v-\tv|$ and $|p(v)-p(\tv)|$ to obtain
		\begin{align*}
		&\int_0^t\int_\mathbb{R} (|\tv_x|+|\tu_x|)^{1+p} \left|v-\tv\right|^2 \, dx d s \le C \int_0^t \int_\mathbb{R} |\tv_x|^{1+p} \left|p(v)-p(\tv)\right|^2 \, dx ds \\ 
		&\le C \int_0^t \int_\mathbb{R} |\tv_x|^{1+p} \left|p(v)-p(\tv) - \frac{u-\tu}{2C_*}\right|^2 \, dx ds +C \int_0^t \int_\mathbb{R} |\tv_x|^{1+p} \left|u-\tu\right|^2 \, dx ds \\ 
		&\le C \norm{\tv_x}_{L^\infty((0,T)\times \mathbb{R})}^p \int_0^t  (G_1 + G^S) \,  ds  \le C \delta_S^{2p} \int_0^t  (G_1 + G^S) \,  ds.
		\end{align*}
	\end{proof}
	
	We now briefly present the idea of proving Lemma \ref{Main Lemma 2}. In the following, we present three lemmas, Lemma \ref{lem: est-0}, Lemma \ref{lem: est-1}, and Lemma \ref{lem: est-2}, which provide a control on the different quantities for the perturbations $(v-\tv,u-\tu,w-\tw)$. After we prove those lemmas, we combine these estimates by multiplying appropriate weights to obtain the desired results in the main lemma, Lemma \ref{Main Lemma 2}.
	
	\begin{lemma} \label{lem: est-0}Under the hypotheses of Proposition \ref{apriori-estimate}, there exists a positive constant $C$ that is independent of $\delta_S$ and $\e_1$, such that for all $t \in [0,T],$
		\begin{equation} \label{eq: est-0}
		\begin{aligned}
		&\norm{ \left(u-\widetilde{u}, w-\widetilde{w} \right)_x(t,\cdot)}_{L^2(\mathbb{R})}^2+  \int_0^t D_{u_2} \, d s \\
		&\le C \norm{  \left( u-\tu,w-\tw\right)_x(0,\cdot)}_{L^2(\mathbb{R})}^2+C\delta_S \int_0^t |\dot{X}(s)|^2 \, d s + C \int_0^t \| (v-\tv)_x\|_{L^2 (\mathbb{R})}^2 \, d s \\ 
		& \quad +C \delta_S \int_0^t  \left( G_1 + G^S \right) \, d s \\
		&\quad + C (\varepsilon_1+\delta_S) \int_0^t  \left( D_{u_1}+\| (w-\tw)_x \|_{L^2(\mathbb{R})}^2 + \| (w-\tw)_{xx}\|_{L^2 (\mathbb{R})}^2 \right) \, d s .
		\end{aligned}
		\end{equation}
	\end{lemma}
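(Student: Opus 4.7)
The plan is to apply the standard $H^1$ energy method to the perturbation system \eqref{Underlying wave with shift}. I would multiply the $\psi$-equation by $-\psi_{xx}$ and the $\omega$-equation by $-\omega_{xx}$, integrate over $\mathbb{R}$ (all boundary terms vanish by decay), and add. The time-derivative pieces produce $\frac{d}{dt}\frac{1}{2}\int(|\psi_x|^2+|\omega_x|^2)\,dx$, while the viscous term in the $\psi$-equation, using the decomposition \eqref{eq: P,F_1,F_2}, yields the leading dissipation $\int|\psi_{xx}|^2/v\,dx \gtrsim D_{u_2}$ on the left-hand side.

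The crucial structural point is a cancellation of the leading-order Korteweg cross terms. By \eqref{eq: P,F_1,F_2}, the leading part of $(w_x/v^{5/2}-\tw_x/\tv^{5/2})_x$ is $\omega_{xx}/v^{5/2}$, and of $(u_x/v^{5/2}-\tu_x/\tv^{5/2})_x$ is $\psi_{xx}/v^{5/2}$. Testing the former against $-\psi_{xx}$ produces $-\int \psi_{xx}\omega_{xx}/v^{5/2}\,dx$, while testing the latter against $-\omega_{xx}$ in the $\omega$-equation produces $+\int\psi_{xx}\omega_{xx}/v^{5/2}\,dx$; these two contributions cancel exactly. This reflects the antisymmetry of the off-diagonal block of the matrix $M(U)$ in \eqref{eq:NS-abs}, and is precisely why the augmented formulation \eqref{NSK-w} is compatible with the $H^1$ energy method.

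What remains is to control three families of right-hand-side terms: (a) the pressure term $\int(p(v)-p(\tv))_x\psi_{xx}\,dx$, expanded via \eqref{eq: P,F_1,F_2} into $\int p'(v)\phi_x\psi_{xx}\,dx + \int\tv_x(p'(v)-p'(\tv))\psi_{xx}\,dx$; (b) the shift-generated terms $-\int\dot{X}\tu_x\psi_{xx}\,dx$ and $-\int\dot{X}\tw_x\omega_{xx}\,dx$; and (c) the nonlinear lower-order remainders obtained by subtracting the leading parts from the expansions in \eqref{eq: P,F_1,F_2}. In each case I would use Young's inequality to absorb small multiples of $\psi_{xx}^2$ and $\omega_{xx}^2$, while the residual quadratic pieces are bounded via the pointwise bounds $|\tv_x|\le C\delta_S^2$ and $|\tu_{xx}|,|\tw_x|,|\tw_{xx}|\le C|\tv_x|$ from \eqref{shock-property}, the Sobolev bounds \eqref{eq: L^infty bounds}, and, after integrating in time, Lemma \ref{lem: Useful bad terms estimates}. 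In particular, the pressure remainder $\int p'(v)\phi_x\psi_{xx}\,dx$ generates the $C\int_0^t\|(v-\tv)_x\|_{L^2}^2\,ds$ contribution on the right; shock-localized pieces such as $\int |\tv_x|^2|\phi|^2\,dx$ produce the $C\delta_S\int_0^t(G_1+G^S)\,ds$ term via Lemma \ref{lem: Useful bad terms estimates}; and the $\dot{X}$ terms produce the $C\delta_S\int_0^t|\dot{X}|^2\,ds$ contribution after using $\|\tu_x\|_{L^2}^2,\|\tw_x\|_{L^2}^2\le C\delta_S^3$.

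The main obstacle is the book-keeping of the Korteweg nonlinearities, since $\omega_{xx}$ is not yet a ``good'' quantity at this stage. Every residual term involving $\omega_x$, $\omega_{xx}$, or $\psi_x$ beyond the basic diffusion must be shown to carry a coefficient of size $O(\varepsilon_1+\delta_S)$, so it can be absorbed when this estimate is later combined with Lemma \ref{Main Lemma} and the subsequent $\omega$-estimates via a suitable weighted sum. Once all residuals are bounded, integrating the pointwise-in-time inequality over $[0,t]$ yields \eqref{eq: est-0}.
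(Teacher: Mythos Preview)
Your proposal is correct and matches the paper's approach essentially step for step: the paper multiplies \eqref{Underlying wave with shift}$_2$ by $-\psi_{xx}$ and \eqref{Underlying wave with shift}$_3$ by $-\omega_{xx}$, adds, and exploits exactly the cancellation $-\int v^{-5/2}\omega_{xx}\psi_{xx}\,dx + \int v^{-5/2}\psi_{xx}\omega_{xx}\,dx = 0$ that you identify, then handles the pressure, shift, and Korteweg remainder terms via Young's inequality, the shock bounds \eqref{shock-property}, the $L^\infty$ control \eqref{eq: L^infty bounds}, and Lemma \ref{lem: Useful bad terms estimates}, leaving the $\omega_x$, $\omega_{xx}$ contributions on the right with an $O(\varepsilon_1+\delta_S)$ coefficient precisely as you describe.
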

	\begin{proof}
		We multiply the equation $\eqref{Underlying wave with shift}_2$ and \eqref{Underlying wave with shift}$_3$ by $-\psi_{xx}$ and $-\omega_{xx}$ to obtain
		\begin{equation}\label{eq: psi_t psi_xx}
		-\psi_t \psi_{xx} - \left(p(v)-p(\tv) \right)_x \psi_{xx} = -\left( \frac{u_x}{v}-\frac{\tu_x}{\tv} \right)_x \psi_{xx} - \left( \frac{w_x}{v^{5/2}} - \frac{\tw_x}{\tv^{5/2}} \right)_x \psi_{xx} - \dot{X} \tu_x \psi_{xx}
		\end{equation}
		and 
		\begin{equation}\label{eq: omega_t omega_xx}
		-\omega _t \omega_{xx}= \left( \frac{u_x}{v^{5/2}} - \frac{\tu_x}{\tv^{5/2}} \right)_x \omega_{xx} - \dot{X} \tw_x \omega_{xx}
		\end{equation}
		respectively. Subsequently, we add \eqref{eq: psi_t psi_xx} and \eqref{eq: omega_t omega_xx} and integrate over the whole space  $\mathbb{R}$ to derive
		\begin{align*}
		&\frac{d}{dt}\int_{\mathbb{R}} \frac{|\psi_x|^2}{2} \,dx+\frac{d}{dt}\int_{\mathbb{R}} \frac{|\omega_x|^2}{2} \,dx\\ 
		&\quad =-\dot{X}\int_{\mathbb{R}} \widetilde{u}_x\psi_{xx}+\tw_x \omega_{xx}\,dx+\int_{\mathbb{R}} (p(v)-p(\tv))_x \psi_{xx}\,dx \\ 
		&\qquad -\int_{\mathbb{R}} \left(\frac{u_x}{v}-\frac{\widetilde{u}_x}{\widetilde{v}}\right)_x\psi_{xx}\,dx +\int_{\mathbb{R}} \left[ \left( \frac{u_x}{v^{5/2}} - \frac{\tu_x}{\tv^{5/2}} \right)_x \omega_{xx}-\left(\frac{w_x}{v^{5/2}}-\frac{\widetilde{w}_x}{\widetilde{v}^{5/2}}\right)_x \psi_{xx} \right]\, dx\\
		&\quad =: K_1+K_2+K_3+K_4. 
		\end{align*}
		$\noindent \bullet$ (Estimate of $K_1$ and $K_2$): We first define the good term
		\[\mathcal{D}_{u_2}:=\int_{\mathbb{R}}\frac{|\psi_{xx}|^2}{v}\,dx,\]
		which will be derived from the estimate of $K_3$. Using $|\tu_x| \sim |\tv_x|$ and $|\tw_x| \le (|\tv_{xx}| + |\tv_x|) \le C |\tv_x|$ in Lemma \ref{lem:shock-property}, and applying H\"older's inequality, we estimate $K_1$ as
		\begin{align*}
		|K_1|
		&\le C |\dot{X}|\sqrt{\int_\mathbb{R} |\tv_x|^2 \, dx } \left( \sqrt{\int_{\mathbb{R}} |\psi_{xx}|^2 \,dx} + \sqrt{\int_\mathbb{R} |\omega_{xx}|^2 \, dx}\right) \\
		&\le C |\dot{X}| \delta_S^{3/2} \left( \norm{\psi_{xx}}_{L^2(\mathbb{R})} + \norm{\omega_{xx}}_{L^2(\mathbb{R})} \right) \\ 
		&\le \delta_S|\dot{X}|^2+C\delta_S^2 \left( \mathcal{D}_{u_2} + \lVert \omega_{xx} \rVert^2_{L^2(\mathbb{R})} \right) \le  \delta_S|\dot{X}|^2+ \frac{1}{16} \mathcal{D}_{u_2} + C \delta_S^2 \| \omega_{xx} \|_{L^2 (\mathbb{R})}^2.
		\end{align*}
		On the other hand, by using Lemma \ref{lem : Estimate-relative}, we have 
		\begin{equation*} 
		\left|\left(p(v)-p(\tv)\right)_x\right| = \left| p'(v) (v-\tv)_x + \tv_x \left( p'(v) - p'(\tv) \right) \right| \le C \left( |\phi_x| + |\tv_x| |\phi| \right).
		\end{equation*}
		Then, we use \eqref{eq: P,F_1,F_2}, Young's inequality, and apply Lemma \ref{lem: Useful bad terms estimates} to estimate $K_2$ as
		\begin{align*}
		|K_2|&\le C \int_\mathbb{R} |\phi_x| |\psi_{xx}| \, dx + C \int_\mathbb{R} |\tv_x| |\phi| |\psi_{xx}| \, dx \\
		&\le \frac{1}{16}\int_{\mathbb{R}} |\psi_{xx}|^2 dx + C \int_{\mathbb{R}} |\phi_x|^2 dx + C \int_\mathbb{R} |\tv_x|^2 |\phi|^2 \, dx  \\ 
		&\le \frac{1}{16} \mathcal{D}_{u_2}+C \lVert \phi_{x} \rVert^2_{L^2(\mathbb{R})}+C \delta_S^2 \left(G_1+G^S\right).
		\end{align*}
		
		\noindent $\bullet$ (Estimate of $K_3$): We decompose $K_3$ into
		\begin{align*}
		K_3=&-\int_{\mathbb{R}}\frac{|\psi_{xx}|^2}{v} \, dx-\int_{\mathbb{R}} \left(\frac{1}{v}\right)_x\psi_x\psi_{xx} \, dx \\ 
		&-\int_{\mathbb{R}} \widetilde{u}_{xx}\left(\frac{1}{v}-\frac{1}{\widetilde{v}}\right)\psi_{xx} \, dx-\int_{\mathbb{R}} \widetilde{u}_x\left(\frac{1}{v}-\frac{1}{\widetilde{v}}\right)_x\psi_{xx} \, dx \\
		=&-\mathcal{D}_{u_2}+K_{3,1}+K_{3,2}+K_{3,3}.
		\end{align*}
		We estimate $K_{3,1}$ by using $\left(\frac{1}{v}\right)_x \le C |v_x| \le C ( |\phi_x| + |\tv_x| )$ and the smallness assumption \eqref{eq: L^infty bounds} as
		\begin{align*}
		|K_{3,1}| &\le \| \phi_x \|_{L^\infty(\mathbb{R})} \| \psi_x \|_{L^2 (\mathbb{R})} \| \psi_{xx} \|_{L^2(\mathbb{R})} + \| \tv_x \|_{L^\infty(\mathbb{R})} \| \psi_x \|_{L^2 (\mathbb{R})} \| \psi_{xx} \|_{L^2(\mathbb{R})} \\ 
		&\le C (\varepsilon_1 + \delta_S ) ( \| \psi_x \|_{L^2(\mathbb{R})}^2 + \| \psi_{xx} \|_{L^2 (\mathbb{R})}^2 ) \\ 
		&\le \frac{1}{16} \mathcal{D}_{u_2} + C (\varepsilon_1 + \delta_S) D_{u_1}.
		\end{align*}
		To estimate $K_{3,2}$ and $K_{3,3}$, we first note that the following control on the perturbation holds for any $\alpha>0$ by the Mean value theorem:
		\begin{equation}\label{eq: (1/v^a-1/tv^a) and (1/v^a-1/tv^a)_x}
		\left|\frac{1}{v^\alpha}-\frac{1}{\tv^\alpha}\right| \le C|v-\tv| \quad \text{and} \quad
		\left|\left(\frac{1}{v^\alpha}-\frac{1}{\tv^\alpha} \right)_x\right| \le C \left(|(v-\tv)_x| + |\tv_x| |v-\tv| \right).
		\end{equation}
		Then, we use \eqref{eq: (1/v^a-1/tv^a) and (1/v^a-1/tv^a)_x}, $|\tu_{xx}| \le C |\tu_x|$ in Lemma \ref{lem:shock-property}, Young's inequality, and Lemma \ref{lem: Useful bad terms estimates} to estimate $K_{3,2}$ as
		\begin{align*}
		|K_{3,2}| &\le C \int_{\mathbb{R}} |\tu_x| |\phi||\psi_{xx}| \, dx  \le \frac{1}{16} \mathcal{D}_{u_2} + \int_\mathbb{R} |\tu_x|^2 |\phi|^2 \, dx \\
		&\le \frac{1}{16} \mathcal{D}_{u_2} + C\delta_S^2 \left(G_1+G^S \right).
		\end{align*}
		By the same argument, $K_{3,3}$ can be estimated as
		\begin{align*}
		|K_{3,3}| &\le C \int_{\mathbb{R}} |\tu_x| (|\phi_x| + |\tv_x| |\phi|) |\psi_{xx}| \, dx \\
		&\le \frac{1}{16} \mathcal{D}_{u_2} + C \int_\mathbb{R} |\tu_x|^2 |\phi_x|^2 \, dx +C \int_\mathbb{R} |\tu_x|^2 |\tv_x|^2 |\phi|^2 \, dx \\ 
		&\le \frac{1}{16} \mathcal{D}_{u_2} + C \delta_S^4 \left( \| \phi_x \|_{L^2(\mathbb{R})}^2 + G_1 + G^S \right).
		\end{align*}
		Combining the estimates for $K_{3,1},K_{3,2}$, and $K_{3,3}$, we obtain
		\begin{align*}
		K_3\le -\frac{13}{16}\mathcal{D}_{u_2}+C(\e_1+\delta_S)D_{u_1}+C\delta_S^2(G_1+G^S)+C\delta_S^4\|\phi_x\|_{L^2(\R)}^2.
		\end{align*}
		$\bullet$ (Estimate of $K_4$): We split $K_4$ into
		\begin{align*}
		K_4&=\int_{\mathbb{R}} \left[ \left( \frac{u_x}{v^{5/2}} - \frac{\tu_x}{\tv^{5/2}} \right)_x \omega_{xx}-\left(\frac{w_x}{v^{5/2}}-\frac{\widetilde{w}_x}{\widetilde{v}^{5/2}}\right)_x \psi_{xx}  \right] \, dx \\ 
		&= \int_{\mathbb{R}}\left( \frac{\psi_x}{v^{5/2}} + \tu_x \left( \frac{1}{v^{5/2}}-\frac{1}{\tv^{5/2}} \right) \right)_x \omega_{xx} \, dx -  \int_{\mathbb{R}}\left( \frac{\omega_x}{v^{5/2}} + \tw_x \left( \frac{1}{v^{5/2}}-\frac{1}{\tv^{5/2}} \right) \right)_x \psi_{xx} \, dx \\
		&=\int_{\mathbb{R}}\psi_x \left( \frac{1}{v^{5/2}}\right)_x \omega_{xx} \, dx + \int_{\mathbb{R}}\tu_{xx} \left( \frac{1}{v^{5/2}}-\frac{1}{\tv^{5/2}} \right) \omega_{xx}dx+\int_{\mathbb{R}} \tu_x \left( \frac{1}{v^{5/2}}-\frac{1}{\tv^{5/2}} \right)_x \omega_{xx} \, dx \\ 
		& \; \ \  -\int_{\mathbb{R}}\omega_x \left( \frac{1}{v^{5/2}}\right)_x \psi_{xx} \, dx + \int_{\mathbb{R}}\tw_{xx} \left( \frac{1}{v^{5/2}}-\frac{1}{\tv^{5/2}} \right) \psi_{xx} \, dx+\int_{\mathbb{R}}\tw_x \left( \frac{1}{v^{5/2}}-\frac{1}{\tv^{5/2}} \right)_x \psi_{xx} \, dx \\
		&=: K_{4,1}+K_{4,2}+K_{4,3}+K_{4,4}+K_{4,5}+K_{4,6}.
		\end{align*}
		
		We use $(\frac{1}{v^{5/2}})_x \le C |v_x| \le C(|\phi_x| + |\tv_x|)$ and \eqref{eq: L^infty bounds} to estimate $K_{4,1}$ as
		\begin{align*}
		\left|K_{4,1} \right| 
		&\le C \norm{ \phi_x }_{L^\infty(\mathbb{R})}  \norm{\psi_x}_{L^2(\mathbb{R})} \norm{\omega_{xx}}_{L^2 (\mathbb{R})} + C \norm{ \tv_x }_{L^\infty(\mathbb{R})}  \norm{\psi_x}_{L^2(\mathbb{R})} \norm{\omega_{xx}}_{L^2 (\mathbb{R})} \\
		&\le C (\varepsilon_1 + \delta_S ) ( \norm{\psi_x}_{L^2(\mathbb{R})}^2+ \norm{\omega_{xx}}_{L^2 (\mathbb{R})}^2  )\\  
		&\le  \frac{1}{16} \mathcal{D}_{u_2} +C (\varepsilon_1 + \delta_S ) \norm{\omega_{xx}}_{L^2 (\mathbb{R})}^2 .
		\end{align*}
		For $K_{4,2}$, we use \eqref{eq: (1/v^a-1/tv^a) and (1/v^a-1/tv^a)_x}, $|\tu_{xx}| \le C \delta_S |\tu_x|$ in Lemma \ref{lem:shock-property},  
		Young's inequality, and Lemma \ref{lem: Useful bad terms estimates} as
		\begin{align*}
		\left|K_{4,2}\right|
		&\le C \delta_S \int_\mathbb{R} |\tu_x | |\phi| |\omega_{xx} | \, dx \le C \delta_S \int_{\mathbb{R}} |\tu_x|^2 |\phi|^2 \, dx + C \delta_S \int_\mathbb{R}  |\omega_{xx}|^2 \, dx \\ 
		&\le C \delta_S^2 (G_1 + G^S )+C \delta_S \| \omega_{xx} \|_{L^2(\mathbb{R})}^2.
		\end{align*}
		Using the same argument, $K_{4,3}$ can be estimated as
		\begin{align*}
		\left|K_{4,3}\right| 
		&\le  C  \int_\mathbb{R} |\tu_x | \left( |\phi_x| + |\tv_x| |\phi| \right) |\omega_{xx} | \, dx \\ 
		&\le C \int_{\mathbb{R}} |\tu_x| |\phi_x|^2 \, dx +C \int_\mathbb{R} |\tu_x| |\tv_x|^2 |\phi|^2 \, dx + C \int_\mathbb{R} |\tu_x| |\omega_{xx}|^2 \, dx \\ 
		&\le C \delta_S^2 \left( \norm{\phi_x}_{L^2(\mathbb{R})}^2 + \| \omega_{xx} \|_{L^2(\mathbb{R})}^2 +G_1+G^S \right).
		\end{align*}
		The term $K_{4,4}$ can be treated by using the same argument as in $K_{4,1}$:
		\begin{align*}
		|K_{4,4}| 
		&\le C \int_\mathbb{R} |\omega_x| |v_x| |\psi_{xx}| \, dx  \\
		&\le C \| \phi_x \|_{L^\infty(\mathbb{R})} \| \omega_x \|_{L^2(\mathbb{R})} \|\psi_{xx} \|_{L^2 (\mathbb{R})} +C \| \tv_x \|_{L^\infty(\mathbb{R})} \| \omega_x \|_{L^2(\mathbb{R})} \|\psi_{xx} \|_{L^2 (\mathbb{R})}\\
		&\le C (\varepsilon_1 + \delta_S) (\| \omega_x \|_{L^2(\mathbb{R})}^2 + \|\psi_{xx} \|_{L^2 (\mathbb{R})}^2) \\ 
		&\le \frac{1}{16} \mathcal{D}_{u_2} +  C (\varepsilon_1 + \delta_S) \norm{\omega_x}_{L^2(\mathbb{R})}^2.
		\end{align*}
		We use $|\tw_{xx}| \le C ( |\tv_x|^3 + |\tv_x| |\tv_{xx}|+ |\tv_{xxx}|) \le C \delta_S^2 |\tv_x|$, and \eqref{eq: (1/v^a-1/tv^a) and (1/v^a-1/tv^a)_x} to estimate $K_{4,5}$ as
		\begin{align*}
		|K_{4,5}| &\le C \delta_S^2 \int_\mathbb{R} |\tv_x| |\phi| |\psi_{xx}| \, dx \le C \delta_S^2 \int_\mathbb{R} |\tv_x|^2 |\phi|^2 \, dx + C \delta_S^2 \int_\mathbb{R}  |\psi_{xx}|^2 \, dx \\
		&\le  \frac{1}{16} \mathcal{D}_{u_2} +C \delta_S^2 \left( G_1 + G^S \right).
		\end{align*}
		Similarly, we estimate $K_{4,6}$ by using \eqref{eq: (1/v^a-1/tv^a) and (1/v^a-1/tv^a)_x} and $|\tw_x|\le C|\tv_x|^2 + C |\tv_{xx}| \le C \delta_S |\tv_x|$ as
		\begin{align*}
		|K_{4,6}| 
		&\le C \int_\mathbb{R} |\tw_x| |\phi_x| |\psi_{xx}| \, dx +C \int_\mathbb{R} |\tw_x| |\tv_x| |\phi| |\psi_{xx}| \, dx \\ 
		&\le C \delta_S \int_\mathbb{R} |\tv_x| |\phi| |\psi_{xx}| \, dx + C \delta_S \int_\mathbb{R} |\tv_x|^2 |\phi| |\psi_{xx}| \, dx\\ 
		&\le C \delta_S \int_\mathbb{R} |\tv_x| |\phi_x|^2 \, dx + C \delta_S \int_\mathbb{R} |\tv_x| |\psi_{xx}|^2 \, dx +C \delta_S \int_\mathbb{R} |\tv_x|^2 |\phi|^2 \, dx \\
		&\le \frac{1}{16} \mathcal{D}_{u_2}+ C \delta_S^2 \left( \| \phi_x\|_{L^2(\mathbb{R})}^2 + G_1+G^S \right).
		\end{align*}
		Thus, $K_4$ can be bounded as
		\begin{align*}
		K_4\le \frac{1}{4}\mathcal{D}_{u_2}+C(\e_1+\delta_S)\|\omega_{xx}\|_{L^2(\R)}^2+C\delta_S^2(G_1+G^S)+C\delta_S^2\|\phi_x\|_{L^2(\R)}^2+C(\e_1+\delta_S)\|\omega_{x}\|_{L^2(\R)}^2.
		\end{align*}
		Therefore, combining all estimates for $K_i$ with $i=1,2,3,4$ and using the smallness of $\delta_S$ and $\varepsilon_1$, there exists a positive constant $C$, which is independent with $\delta_S$, $\varepsilon_1$, and $T$, such that
		\begin{align*}
		&\frac{d}{dt}\int_{\mathbb{R}} \frac{|\psi_x|^2}{2} \,dx+\frac{d}{dt}\int_{\mathbb{R}} \frac{|\omega_x|^2}{2} \,dx + \frac{1}{16} \mathcal{D}_{u_2} \\ 
		&\le \delta_S |\dot{X}|^2 +C \| \phi_x\|_{L^2(\mathbb{R})}^2+C \delta_S \left( G_1 + G^S \right)+ C (\varepsilon_1+\delta_S)  \left( D_{u_1}+ \norm{\omega_{x}}_{L^2(\mathbb{R})}^2 + \|\omega_{xx}\|_{L^2 (\mathbb{R})}^2 \right) .
		\end{align*}
		Integrating from $0$ to $t$ for any $t\in \left[0, T  \right]$, we obtain 
		\begin{align*}
		&\frac{1}{2}\norm{ \big(\psi_x, \omega_x \big)}_{L^2(\mathbb{R})}^2+ \frac{1}{16} \int_0^t \mathcal{D}_{u_2} \, d s \\
		&\le \frac{1}{2} \norm{ \big(\psi_{0x}, \omega_{0x} \big)}_{L^2(\mathbb{R})}^2+\delta_S \int_0^t |\dot{X}(s)|^2 \, d s + C \int_0^t \| (v-\tv)_x\|_{L^2 (\mathbb{R})}^2 \, d s \\ 
		& \quad +C \delta_S \int_0^t  \left( G_1 + G^S \right) \, d s + C (\varepsilon_1+\delta_S) \int_0^t  \left( D_{u_1}+\| (w-\tw)_x \|_{L^2(\mathbb{R})}^2 + \| (w-\tw)_{xx}\|_{L^2 (\mathbb{R})}^2 \right) \, d s .\\  
		\end{align*}
		By the smallness assumption \eqref{smallness}, $v$ is bounded below in $[0,T]$ by some positive constant, which implies $D_{u_2} \le C \mathcal{D}_{u_2}$. This completes the proof of Lemma \ref{lem: est-0}.
		
	\end{proof}

	\begin{lemma}\label{lem: est-1}
		Under the assumptions of Proposition \ref{apriori-estimate}, there exist positive constant $C$ that is independent of $\delta_S$ and $\e_1$, such that for $0 \le t \le T$,
		\begin{equation} \label{eq: est-1}
		\begin{aligned}
		&\norm{w-\tw}_{L^2(\mathbb{R})}^2+  \int_\mathbb{R} (u-\tu)(w-\tw) \, dx  +  \int_0^t \left(G_w + D_{w_1} \right) \, d s \\ 
		&\le  C \norm{w_0-\tw(0,\cdot)}_{L^2(\mathbb{R})}^2 + C \int_\mathbb{R} (u_0-\tu(0,\cdot))(w_0-\tw(0,\cdot)) \, dx \\
		&\quad + \int_\mathbb{R} | \tw (v-\tv) (w-\tw) |\, dx + C \int_\mathbb{R} \left|  \tw(0,\cdot) (v_0-\tv(0,\cdot)) (w_0-\tw(0,\cdot)) \right|\, dx \\
		& \quad + C \delta_S | \dot{X}|^2 + C \int_0^t D_{u_1} \, d s +C \delta_S \int_0^t \left( \| (v-\tv)_x \|_{L^2 (\mathbb{R})}^2 + D_{u_2}+G_1+ G^S \right) \, d s .
		\end{aligned}
		\end{equation}
	\end{lemma}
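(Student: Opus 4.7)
The approach is to derive an energy identity for the combined functional
$$E(t) := \tfrac{1}{2}\norm{\omega(t,\cdot)}_{L^2(\R)}^2 + \int_\R \psi(t,x)\omega(t,x)\,dx,$$
with $\phi,\psi,\omega$ as in \eqref{Notations: phi,psi,omega}, using system \eqref{Underlying wave with shift}. The cross term $\int\psi\omega$ is chosen because the dissipation $D_{w_1}$ does not appear from a pure $\norm{\omega}^2$-estimate (the $\omega$-equation is transport-type), but surfaces from the Korteweg cross-coupling $-\int \omega_x(w_x/v^{5/2} - \tw_x/\tv^{5/2})\,dx = -\int \omega_x^2/v^{5/2}\,dx + (\textrm{commutators})$, which appears when $\tfrac{d}{dt}\int\psi\omega\,dx$ is expanded via the $\psi$-equation.

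Concretely, I first multiply $\eqref{Underlying wave with shift}_3$ by $\omega$ and integrate in $x$, using \eqref{eq: P,F_1,F_2}, to obtain $\tfrac12\tfrac{d}{dt}\norm{\omega}^2 = \int\omega_x\bigl(\psi_x/v^{5/2} + \tu_x(1/v^{5/2}-1/\tv^{5/2})\bigr)\,dx + \dot X\int\tw_x\omega\,dx$. Next, I compute $\tfrac{d}{dt}\int\psi\omega\,dx = \int\psi_t\omega\,dx + \int\psi\omega_t\,dx$ using both equations of \eqref{Underlying wave with shift}. After integration by parts, the Korteweg contribution of $\psi_t$ yields the decisive good term $-\int\omega_x^2/v^{5/2}\,dx$ (the source of $D_{w_1}$); its viscous contribution $-\int\omega_x(u_x/v - \tu_x/\tv)\,dx$ is bounded by $D_{u_1} + D_{u_2}$ after a further integration by parts; and the $\psi\omega_t$ contribution produces $\int\psi_x^2/v^{5/2}\,dx \le C D_{u_1}$. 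Summing the two identities and integrating on $[0,t]$ gives $E(t) + c\!\int_0^t D_{w_1}\,ds \le E(0) + \int_0^t\mathcal R\,ds$ for a remainder $\mathcal R$.

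The remainder $\mathcal R$ splits into four groups: (i) quadratic expressions in $(\phi,\psi)$ weighted by $|\tv_x|$ or $|\tu_x|$, handled by Lemma \ref{lem: Useful bad terms estimates} and absorbed into $\delta_S(G_1+G^S)$; (ii) the pressure contribution $\int (p(v)-p(\tv))\omega_x\,dx$, split as $\bigl(p(v)-p(\tv)-\tfrac{\psi}{2C_*}\bigr) + \tfrac{\psi}{2C_*}$ to reveal $G_1$-, $D_{u_1}$-, and small fraction of $D_{w_1}$ structure; (iii) commutators from $1/v^{5/2}-1/\tv^{5/2}$, controlled via \eqref{eq: (1/v^a-1/tv^a) and (1/v^a-1/tv^a)_x} and the smallness bounds \eqref{eq: L^infty bounds}; (iv) shift-driven terms handled by Young's inequality against $\delta_S|\dot X|^2$, using $|\tw_x|,|\tw_{xx}|\le C|\tv_x|$ and $\norm{\tv_x}_{L^\infty}\le C\delta_S^2$ from Lemma \ref{lem:shock-property}.

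The main obstacle is producing the good term $\int_0^t G_w\,ds = \int_0^t\norm{\omega}^2\,ds$ on the left, since no direct damping of $\omega$ is available from its own equation. The plan is to exploit the algebraic identity $w = -v_x/v^{5/2}$ from \eqref{NSK-w}, which yields
$$\omega = -\frac{\phi_x}{v^{5/2}} + \tv_x \cdot O(|\phi|).$$
Using this identity, a copy of $\int\omega^2\,dx$ can be rewritten (modulo commutators supported near the shock, which are controlled by $G_1, G^S$ and absorbed thanks to $\delta_S,\varepsilon_1$ smallness) as $-\int \phi_x\omega/v^{5/2}\,dx$. An integration by parts in $x$ then exchanges $\phi_x$ for $\phi\omega_x$, which is absorbed into a small fraction of $D_{w_1}$ via Young's inequality, at the cost of the pointwise boundary-type term $\int_\R |\tw\phi\omega|\,dx$ (with $\tw \sim -\tv_x/\tv^{5/2}$) that is retained on the right-hand side of \eqref{eq: est-1} at times $0$ and $t$. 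Closing the argument with the smallness of $\delta_S$ and $\varepsilon_1$ to absorb spare fractions of $D_{w_1}$ and $G_w$ back to the left yields \eqref{eq: est-1}.
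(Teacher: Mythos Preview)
Your overall scaffolding (compute $\tfrac{d}{dt}\int\psi\omega$, extract $D_{w_1}$ from the Korteweg cross-term) is on the right track, but the mechanism you propose for producing the $G_w$ damping does not work, and this is the heart of the lemma.

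\textbf{Where the argument breaks.} In your item (ii) you integrate the pressure contribution by parts and write $\int(p(v)-p(\tv))\omega_x\,dx$, then split $p(v)-p(\tv) = \bigl(p(v)-p(\tv)-\tfrac{\psi}{2C_*}\bigr)+\tfrac{\psi}{2C_*}$. After Young's inequality this costs $\|p(v)-p(\tv)-\tfrac{\psi}{2C_*}\|_{L^2}^2$ \emph{without} the weight $|a_x|$; that is not $G_1$ and is not bounded by anything on the right of \eqref{eq: est-1}. Likewise, your separate trick for manufacturing $G_w$ on the left---rewriting $\int\omega^2 \approx -\int \phi_x\omega/v^{5/2}$, integrating by parts, and absorbing into $D_{w_1}$---fails for the same reason: Young's inequality on $\int\phi\,\omega_x/v^{5/2}$ produces $C_\eta\|\phi\|_{L^2}^2$, the unweighted $L^2$ norm of $\phi$, which again is nowhere on the right side of the estimate. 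The term $\int|\tw\phi\omega|$ you retain is only a shock-localized commutator; it is not the main residual.

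\textbf{The missing idea.} The $G_w$ damping is not an afterthought but comes \emph{directly} from the pressure term, once you use the defining relation $v_x=-v^{5/2}w$. Indeed
\[
(p(v)-p(\tv))_x = p'(v)v_x - p'(\tv)\tv_x = -\bigl(p'(v)v^{5/2}\omega + \tw\,(p'(v)v^{5/2}-p'(\tv)\tv^{5/2})\bigr),
\]
so in the pairing $\int\psi_t\omega$ the pressure contributes $-\int p'(v)v^{5/2}\omega^2\,dx \ge c\,G_w$ (since $p'<0$) plus a shock-localized commutator $S_4$. This is the rewriting \eqref{Underlying wave NSK-w} used in the paper, and it is what makes the lemma close. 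With the damping secured this way, the remaining viscous term $\int(\psi_x/v)_x\omega$ is handled by substituting $\psi_{xx}=\phi_{tx}-\dot X\tv_{xx}$ and $\phi_x=-(v^{5/2}\omega - \tw(v^{5/2}-\tv^{5/2}))$, which is the origin of the weighted functional $\int v^{3/2}\omega^2$ and of the boundary-type term $\int|\tw\phi\omega|$ you see in the statement.
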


	\begin{proof}
		Recall that the perturbation $(\phi,\psi,\omega)$ satisfies \eqref{Underlying wave with shift}:
		\begin{equation}\label{Underlying wave NSK-w}
		\begin{cases}
		\begin{aligned}
		&\phi_t-\psi_x=\dot{X} \tv_x,\\
		&\psi_t-\left(p'(v)v^{5/2}w-p'(\widetilde{v})\widetilde{v}^{5/2}\widetilde{w} \right)=\left( \frac{u_x}{v}-\frac{\tu_x}{\tv} \right)_x + \left( \frac{w_x}{v^{5/2}} - \frac{\tw_x}{\tv^{5/2}} \right)_x + \dot{X} \tu_x,\\
		&\omega_t=-\left( \frac{u_x}{v^{5/2}}-\frac{\tu_x}{\tv^{5/2}}\right)_x+\dot{X} \tw_x.
		\end{aligned}
		\end{cases}
		\end{equation}
		Multiplying $\eqref{Underlying wave NSK-w}_2$ and \eqref{Underlying wave NSK-w}$_3$ by $\omega$ and $\psi$ yields
		\begin{equation*} 
		\omega \psi_t - \left(p'(v)v^{5/2}w-p'(\widetilde{v})\widetilde{v}^{5/2}\widetilde{w} \right) \omega - \left( \frac{w_x}{v^{5/2}} - \frac{\tw_x}{\tv^{5/2}} \right)_x \omega = \left( \frac{u_x}{v}-\frac{\tu_x}{\tv} \right)_x \omega + \dot{X}  \tu_x \omega,
		\end{equation*}
		and
		\begin{align*}
		\omega_t \psi = -\left( \frac{u_x}{v^{5/2}}-\frac{\tu_x} {\tv^{5/2}} \right)_x \psi + \dot{X} \tw_x \psi
		\end{align*}
		respectively. Then, using the above equations, together with  \eqref{eq: P,F_1,F_2}$_3$ and the identities
		\begin{equation*} 
		\begin{aligned}
		p'(v)v^{5/2}w-p'(\tv)\tv^{5/2} \tw &=p'(v)v^{5/2} \omega + \tw \left( p'(v)v^{5/2} - p'(\tv)\tv^{5/2} \right), \\ 
		\left( \frac{u_x}{v^{5/2}}-\frac{\tu_x}{\tv^{5/2}}\right)_x &=\left( \frac{\psi_x}{v^{5/2}}\right)_x-\left( \tu_x \left( \frac{1}{v^{5/2}}-\frac{1}{\tv^{5/2}}\right)\right)_x ,
		\end{aligned}
		\end{equation*}
		we obtain
		\begin{align*}
		&(\omega \psi)_t  -p'(v)v^{5/2} \omega^2 - \tw \left( p'(v)v^{5/2}-p'(\tv)\tv^{5/2} \right) \omega -\left( \frac{\omega_x }{v^{5/2}} \right)_x \omega -\left( \tw_x \left(\frac{1}{v^{5/2}}-\frac{1}{\tv^{5/2}} \right) \right)_x \omega \\ 
		&\quad = -\left( \frac{\psi_x}{v^{5/2}}\right)_x \psi- \left( \tu_x \left(\frac{1}{v^{5/2}}-\frac{1}{\tv^{5/2}} \right)\right)_x \psi + \left( \frac{\psi_x}{v} \right)_x \omega - \left( \tu_x \left( \frac{1}{v} - \frac{1}{\tv}\right) \right)_x \omega + \dot{X} \left( \tw_x \psi+ \tu_x \omega \right).
		\end{align*} 
		We integrate the equation and then use integration-by-parts to derive
		\begin{equation} \label{computation 1}
		\begin{aligned}
		\frac{d}{dt}& \int_\mathbb{R} \omega \psi \, dx  -\int_\mathbb{R} p'(v)v^{5/2} \omega^2 \, dx + \int_\mathbb{R} \frac{\omega_x^2}{v^{5/2}} \, dx \\ 
		&=\dot{X} \int_\mathbb{R} (\tw_x \psi + \tu_x \omega) \, dx+\int_\mathbb{R} \left( \frac{\psi_x}{v}\right)_x \omega \, dx \\
		&\quad+\int_\mathbb{R} \tw \left( p'(v)v^{5/2}-p'(\tv)\tv^{5/2} \right) \omega \, dx -\int_\mathbb{R} \tw_x \left( \frac{1}{v^{5/2}} -\frac{1}{\tv^{5/2}} \right) \omega_x \, dx \\ 
		&\quad+\int_\mathbb{R} \frac{\psi_x^2}{v^{5/2}} \, dx + \int_\mathbb{R}  \tu_x \left(\frac{1}{v^{5/2}}-\frac{1}{\tv^{5/2}} \right) \psi_x \, dx  - \int_\mathbb{R} \tu_x \left( \frac{1}{v}-\frac{1}{\tv} \right) \omega_x \, dx .
		\end{aligned}
		\end{equation}
		On the other hand, using the equation $\eqref{Underlying wave NSK-w}_1$ and
		\begin{equation*}
		(v-\tv)_x =-\left(v^{5/2} w - \tv^{5/2} \tw \right)=-\left( v^{5/2} \omega - \tw \left(v^{5/2} - \tv^{5/2}  \right) \right),
		\end{equation*}
		we express the second term on the right-hand side of  \eqref{computation 1} as 
		\begin{equation} \label{computation 2}
		\begin{aligned}
		\int_\mathbb{R} \left( \frac{\psi_x}{v}\right)_x \omega \, dx &=\int_\mathbb{R} \frac{\psi_{xx}}{v} \omega \, dx+\int_\mathbb{R} \psi_x \left( \frac{1}{v} \right)_x \omega \, dx \\
		&=\int_\mathbb{R} \frac{\phi_{tx}}{v} \omega \, dx - \dot{X} \int_\mathbb{R} \frac{ \tv_{xx}}{v} \omega \, dx +\int_\mathbb{R} \psi_x \left( \frac{1}{v} \right)_x \omega \, dx \\ 
		&=-\int_\mathbb{R} \frac{1}{v} \left( v^{5/2} \omega -\tw(v^{5/2}-\tv^{5/2}) \right)_t \omega \, dx - \dot{X} \int_\mathbb{R} \frac{ \tv_{xx}}{v} \omega \, dx +\int_\mathbb{R} \psi_x \left( \frac{1}{v} \right)_x \omega \, dx  \\
		&=- \frac{1}{2}\int_\mathbb{R} v^{3/2} \frac{d}{dt}\omega^2 \, dx -\frac{5}{2} \int_\mathbb{R}   v^{1/2} v_t \omega^2 \, dx -\int_\mathbb{R}\frac{1}{v} \left( (v^{5/2}-\tv^{5/2}) \tw \right)_t \omega \, dx \\ 
		&\quad - \dot{X} \int_\mathbb{R} \frac{ \tv_{xx}}{v} \omega \, dx +\int_\mathbb{R} \psi_x \left( \frac{1}{v} \right)_x \omega \, dx \\
		&=-\frac{1}{2} \frac{d}{dt} \int_\mathbb{R} v^{3/2} \omega^2 \, dx -\frac{7}{4} \int_\mathbb{R}    v^{1/2} v_t \omega^2 \, dx -\int_\mathbb{R}\frac{1}{v} \left( (v^{5/2}-\tv^{5/2}) \tw \right)_t \omega \, dx \\
		&\quad - \dot{X} \int_\mathbb{R} \frac{ \tv_{xx}}{v} \omega \, dx +\int_\mathbb{R} \psi_x \left( \frac{1}{v} \right)_x \omega \, dx .\\
		\end{aligned}
		\end{equation}
		We combine \eqref{computation 1} and \eqref{computation 2} to obtain
		\begin{equation*} 
		\begin{aligned}
		&\frac{1}{2} \frac{d}{dt} \int_\mathbb{R} v^{3/2} \omega^2 \, dx +\frac{d}{dt} \int_\mathbb{R} \omega \psi \, dx +\mathcal{G}_w +\mathcal{D}_{w_1} = \sum_{i=1}^9 S_i, 
		\end{aligned}
		\end{equation*}
		where
		\begin{align*}
		\mathcal{G}_w :=-\int_\mathbb{R} p'(v)v^{5/2} \omega^2 \, dx,\quad\mathcal{D}_{w_1} :=  \int_\mathbb{R} \frac{\omega_x^2}{v^{5/2}} \, dx
		\end{align*}
		and
		\begin{align*}
		&S_1 :=\dot{X}(t)\int_\mathbb{R}  \left( \tw_x \psi + \tu_x \omega-\frac{\tv_{xx}}{v}  \omega \right) \, dx,\quad S_2 := -\frac{7}{4} \int_\mathbb{R}   v^{1/2} v_t \omega^2   \, dx,\\ 
		&S_3 := - \int_\mathbb{R}  \frac{1}{v} \left( (v^{5/2}-\tv^{5/2}) \tw \right)_t \omega  \, dx,\quad S_4 :=  \int_\mathbb{R}  \tw \left( p'(v)v^{5/2}-p'(\tv)\tv^{5/2} \right) \omega  \, dx,\\
		&S_5 := - \int_\mathbb{R}  \tw_x \left( \frac{1}{v^{5/2}} -\frac{1}{\tv^{5/2}} \right) \omega_x  \, dx,\quad S_6 := - \int_\mathbb{R}  \tu_x \left( \frac{1}{v}-\frac{1}{\tv} \right) \omega_x  \, dx,\\ 
		&S_7 :=  \int_\mathbb{R} \frac{\psi_x^2}{v^{5/2}} \, dx,\quad S_8 :=  \int_\mathbb{R}  \tu_x \left(\frac{1}{v^{5/2}}-\frac{1}{\tv^{5/2}} \right) \psi_x \, dx,\quad S_9 :=  \int_\mathbb{R} \psi_x \left( \frac{1}{v} \right)_x \omega  \, dx. 
		\end{align*}
		In the following, we estimate each $S_i$ term by term.\\
		
		\noindent $\bullet$ (Estimate of $S_1$): We use H\"older's inquality, $|\tw_x| \le C \left( |\tv_x|^2 +|\tv_{xx}| \right) \le C \delta_S |\tv_x| $, and Young's inequality to estimate $S_1$ as
		\begin{align*}
		S_1 &\le C  |\dot{X}| \left( \sqrt{ \int_\mathbb{R} |\tw_x| \, dx}\sqrt{\int_\mathbb{R} |\tw_x| |\psi|^2 \, dx }+\sqrt{ \int_\mathbb{R} |\tu_x|^2 \, dx}\sqrt{\int_\mathbb{R} |\omega|^2 \, dx}+\sqrt{ \int_\mathbb{R} |\tv_{xx}|^2 \, dx}\sqrt{\int_\mathbb{R} |\omega|^2 \, dx} \right) \\
		&\le C \delta_S |\dot{X}| \left(\sqrt{\int_\mathbb{R} |\tv _x| |\psi|^2 \, dx }+\sqrt{\int_\mathbb{R} |\omega|^2 \, dx}\right) \le \frac{\delta_S}{2} |\dot{X}|^2 + C \delta_S \left( G^S+\mathcal{G}_w \right) \\ 
		&\le \frac{\delta_S}{2}|\dot{X}|^2 +\frac{1}{8} \mathcal{G}_w + C \delta_S G^S.
		\end{align*}
		\noindent$\bullet$ (Estimate of $S_2$):
		We estimate $S_2$ by using $v_t=u_x=\psi_x + \tu_x$ as
		\begin{align*}
		S_2 &\le C \int_\mathbb{R} |u_x| |\omega|^2 \, dx \le C \int_\mathbb{R} |\psi_x| |\omega|^2 \, dx + C \int_\mathbb{R} |\tu_x| |\omega|^2 \, dx .
		\end{align*}
		Then, we use
		\begin{equation*}
		|w-\tw| = \left|v^{-5/2} \left(v_x-\tv_x\right) +\tv_x \left(v^{-5/2}-\tv^{-5/2} \right) \right| \le C  \left( | (v-\tv)_x| + |\tv_x| |v-\tv| \right) 
		\end{equation*}
		and \eqref{eq: L^infty bounds} to obtain
		\begin{align*}
		S_2 &\le C  \int_\mathbb{R} |\psi_x| \left( | \phi_x| + |\tv_x| |\phi| \right)  |\omega| \, dx + C \delta_S^2 \int_\mathbb{R} |\omega|^2 \, dx\\
		&\le \int_\mathbb{R} |\psi_x| |\phi_x| |\omega| \, dx + C \int_\mathbb{R} |\psi_x| |\tv_x| |\phi| |\omega| \, dx + C \delta_S^2 \mathcal{G}_w\\
		&\le C \varepsilon_1 \left( \int_\mathbb{R} |\psi_x| |\omega| \, dx + \int_\mathbb{R} |\tv_x| |\psi_x| |\omega| \, dx \right) + C \delta_S^2 \mathcal{G}_w\\
		&\le C \varepsilon_1 \left( \int_\mathbb{R} |\psi_x|^2 \, dx + \int_\mathbb{R}  |\omega|^2 \, dx   \right) + C \delta_S^2 \mathcal{G}_w \le C\varepsilon_1 \left( D_{u_1} + \mathcal{G}_w \right) +C \delta_S^2 \left(\mathcal{G}_w \right) \\ 
		&\le \frac{1}{8} \mathcal{G}_w +  C \varepsilon_1 D_{u_1}.
		\end{align*}
		\noindent $\bullet$ (Estimate of $S_3$): We split $S_3$ into $S_{3,1}$ and $S_{3,2}$ as  
		\begin{align*}
		S_3&=-\frac{d}{dt} \int_\mathbb{R} \frac{1}{v} (v^{5/2}-\tv^{5/2})\tw \omega \, dx + \int_\mathbb{R} (v^{5/2}-\tv^{5/2}) \tw \left( \frac{1}{v} \omega \right)_t \,dx\\
		&=:S_{3,1}+S_{3,2} .
		\end{align*}
		Since $S_{3,1}$ has the form of time-derivative of an integral over $\mathbb{R}$, we remain as it is.
		On the other hand, to estimate $S_{3,2}$, we further split $S_{3,2}$ into $S_{3,21}$ and $S_{3,22}$ as 
		\begin{align*}
		S_{3,2} &=   \int_\mathbb{R} (v^{5/2}-\tv^{5/2}) \tw \left( \frac{1}{v} \right)_t \omega  \,dx +\int_\mathbb{R} (v^{5/2}-\tv^{5/2}) \tw  \frac{1}{v}  \omega_t \,dx\\ 
		&=: S_{3,21} + S_{3,22}.
		\end{align*}
		For $S_{3,21}$, we use $|\tw| \le C |\tv_x|$, $|v_t|=|u_x| \le |\psi_x|+|\tu_x|$, and $\|\phi \|_{L^\infty(\mathbb{R})}+\|\phi_x \|_{L^\infty(\mathbb{R})}+\|\psi \|_{L^\infty(\mathbb{R})} \le C\varepsilon_1 $ to have
		\begin{align*}
		S_{3,21} &\le C \int_\mathbb{R} |\tv_x| |\phi| |\psi_x| |\omega| \, dx+ C \int_\mathbb{R} |\tv_x| |\phi|  |\tu_x| |\omega| \, dx \\ 
		&\le C \varepsilon_1 \left( \int_\mathbb{R} |\tv_x||\psi_x|^2 \, dx + \int_\mathbb{R} |\tv_x| |\omega|^2 \, dx \right)+C \left( \int_\mathbb{R} |\tv_x|^2|\phi|^2 \, dx + \int_\mathbb{R} |\tu_x|^2 |\omega|^2 \, dx \right) \\ 
		&\le C \varepsilon_1 \delta_S^2 (D_{u_1}+\mathcal{G}_w) + C \delta_S^2 ( G_1 + G^S + \mathcal{G}_w) \\ 
		&\le \frac{1}{8} \mathcal{G}_w + C \delta_S^2 (D_{u_1} + G_1 +G^S).
		\end{align*}
		To estimate $S_{3,22}$, we first observe that
		\begin{align*}
		\omega_t &=-\left( \frac{u_x}{v^{5/2}}-\frac{\tu_x}{\tv^{5/2}} \right)_x + \dot{X} \tw_x =-\left( \frac{\psi_x}{v^{5/2}}  + \tu_x \left( \frac{1}{v^{5/2}}-\frac{1}{\tv^{5/2}} \right) \right)_x + \dot{X} \tw_x.
		\end{align*}
		Then, by using $v_x = \phi_x + \tv_x$, and $|\tu_{xx}| \le C \delta_S |\tu_x|$ and $|\tu_x| \sim |\tv_x|$ in Lemma \ref{lem:shock-property}, we can bound $|\omega_t|$ as
		\begin{equation}\label{eq: omega_t}
		\begin{aligned}
		|\omega_t| &\le C \left( |v_x| |\psi_x| + |\psi_{xx}| + |\tu_{xx}| |\phi| + |\tu_x|( |\phi_x| +|\tv_x||\phi|) + |\dot{X}| |\tw_x| \right) \\
		&\le C  \left( |\phi_x| |\psi_x| + |\tv_x| \left(| \phi_x|+|\psi_x| \right) + |\psi_{xx}| +  |\tv_x| |\phi| + |\dot{X}| |\tw_x| \right) . 
		\end{aligned}
		\end{equation}
		Then, $S_{3,22}$ is estimated by using $|\tw| \le |\tv_x|$ and Young's inequality as 
		\begin{align*}
		S_{3,22} &\le C \int_\mathbb{R} |\phi| |\tw| |\omega_t| \, dx \le C \int_\mathbb{R} |\phi| |\tv_x| |\omega_t| \, dx \\
		&\le C \int_\mathbb{R} |\tv_x|^{3/2} |\phi|^2 \, dx +  C \int_\mathbb{R} |\tv_x|^{1/2} |\omega_t|^2 \, dx=: S_{3,221}+S_{3,222}. 
		\end{align*}
		We estimate $S_{3,221}$ by using Lemma \ref{lem: Useful bad terms estimates} as
		\begin{equation} \label{eq: cS_21}
		\begin{aligned}
		S_{3,221} \le C \delta_S \left( G_1 +G^S \right).
		\end{aligned}
		\end{equation}
		On the other hand, we estimate $S_{3,222}$ by using \eqref{eq: omega_t} and Lemma \ref{lem: Useful bad terms estimates} as 
		\begin{align*}
		S_{3,222} \le C \norm{\tv_x}_{L^\infty(\mathbb{R})}^{1/2} &\Big( \norm{\phi_x}_{L^\infty(\mathbb{R})}^2 \norm{\psi_x}_{L^2(\mathbb{R})}^2+ \norm{\tv_x}_{L^\infty(\mathbb{R})}^2 \big(\norm{\phi_x}_{L^2(\mathbb{R})}^2 + \norm{\psi_x}_{L^2(\mathbb{R})}^2 \big)  \\ 
		& \quad +\delta_S (G_1 + G^S) + |\dot{X}|^2 \norm{\tw_x}_{L^2(\mathbb{R})}^2 \Big).
		\end{align*}
		In addition, we use \eqref{eq: L^infty bounds} and $|\tw_x| \le C (|\tv_x|^2 +|\tv_{xx}|)\le C \delta_S |\tv_x|$ in Lemma \ref{lem:shock-property} to obtain 
		\begin{equation} \label{eq: cS_22 }
		\begin{aligned}
		S_{3,222}
		& \le C \delta_S \left( ( \varepsilon_1^2 +\delta_S^4) D_{u_1}+ \delta_S^4  \norm{\phi_x}_{L^2(\mathbb{R})}^2 +D_{u_2} + \delta_S^2 (G_1+G^S)+  | \dot{X}|^2 \norm{\tv_x}_{L^2(\mathbb{R})}^2 \right)\\ 
		&\le  C \delta_S^2 |\dot{X}|^2 + C \delta_S \left(\norm{\phi_x}_{L^2(\mathbb{R})}^2+ D_{u_1} +D_{u_2} +G_1 +G^S \right).
		\end{aligned}
		\end{equation}
		Combining the above two estimates \eqref{eq: cS_21} and \eqref{eq: cS_22 }, we have
		\begin{align*}
		S_{3,22} \le \frac{\delta_S}{2} |\dot{X}|^2 + C \delta_S \left( \|\phi_x\|_{L^2(\mathbb{R})}^2 + D_{u_1}+D_{u_2} +G_1 + G^S   \right).
		\end{align*}
		Therefore, we estimate $S_{3,2}$ as
		\begin{align*}
		S_{3,2} \le \frac{\delta_S}{2} |\dot{X}|^2 + \frac{1}{8} \mathcal{G}_w + C \delta_S \left( \|\phi_x\|_{L^2(\mathbb{R})}^2 + D_{u_1}+D_{u_2} +G_1 + G^S   \right).
		\end{align*}
		\noindent $\bullet$ (Estimate of $S_4$, $S_5$, and $S_6$): For $S_4$, we observe that
		\begin{align*}
		\left|p'(v)v^{5/2}-p'(\tv)\tv^{5/2} \right| &\le |p'(v)| \left|v^{5/2}-\tv^{5/2} \right| + \big|\tv^{5/2} \big| \left|p'(v)-p'(\tv) \right| \\ 
		&\le C \left|v-\tv_x \right| + C \left|\tv_x\right| \left|v-\tv\right|.
		\end{align*}
		Then, using the above estimate, $|\tw| \le C |\tv_x| $, and Lemma \ref{lem: Useful bad terms estimates}, we estimate $S_4$ as
		\begin{align*}
		S_4 &\le C \int_\mathbb{R} |\tv_x| \left(|\phi_x|+|\tv_x||\phi|\right) |\omega| \, dx \le C  \int_\mathbb{R} |\tv_x| |\phi_x| |\omega| \, dx + C \int_\mathbb{R} |\tv_x|^2 |\phi| |\omega| \,dx \\ 
		&\le C \int_\mathbb{R} |\tv_x| |\phi_x|^2 \, dx + C \int_\mathbb{R} |\tv_x| |\omega|^2 \, dx + C \int_\mathbb{R} |\tv_x|^2 |\phi|^2 \, dx \\
		&\le C \delta_S^2 \left( \norm{\phi_x}_{L^2 (\mathbb{R})}^2 + G_1 + G^S + \mathcal{G}_w \right) \\ 
		&\le \frac{1}{8} \mathcal{G}_w +C \delta_S^2 (\norm{\phi_x}_{L^2 (\mathbb{R})}^2+G_1 +G^S).
		\end{align*}
		Similarly, we use $|\tw_x| \le C (|\tv_x|^2 + |\tv_{xx}|) \le C \delta_S |\tv_x| $ and Lemma \ref{lem: Useful bad terms estimates} to estimate $S_5$ as
		\begin{align*}
		S_5&\le C \int_{\mathbb{R}} |\tw_x| |\phi| | \omega_x|  \, dx \le  C \delta_S \int_{\mathbb{R}} |\tv_x| |\phi| |\omega_x| \, dx \\
		&\le C \delta_S \left( \int_\mathbb{R} |\tv_x| |\phi|^2 \, dx + C \int_\mathbb{R} |\tv_x| |\omega_x| \, dx \right) \le C \delta_S \left( G_1 +G^S + \mathcal{D}_{w_1} \right)\\
		&\le \frac{1}{4} \mathcal{D}_{w_1} + C \delta_S (G_1 + G^S).
		\end{align*}
		Finally, we estimate $S_6$ by using Young's inequality and Lemma \ref{lem: Useful bad terms estimates} as
		\begin{align*}
		S_{6}&\le \int_\mathbb{R} |\tu_x| |\phi| |\omega_x| \, dx \le C \int_\mathbb{R} |\tu_x|^{3/2} |\phi|^2 \, dx + C \int_\mathbb{R} |\tu_x|^{1/2} |\omega_x|^2 \, dx\\
		&\le C \delta_S \left( G_1 +G^S + \mathcal{D}_{w_1} \right) \\ 
		&\le \frac{1}{4} \mathcal{D}_{w_1} + C \delta_S (G_1 + G^S).
		\end{align*}
		\noindent $\bullet$ (Estimate of $S_7$, $S_8$, and $S_9$): We simply estimate $S_7$ as
		\begin{align*}
		S_7 \le C \int_\mathbb{R} |\psi_x|^2 \, dx.
		\end{align*}
		To estimate $S_8$, we use Young's inequality and Lemma \ref{lem: Useful bad terms estimates} as
		\begin{align*}
		S_8 &\le C\int_\mathbb{R} |\tu_x| |\phi| |\psi_x| \, dx \le C\int_\mathbb{R} |\tu_x|^{3/2} |\phi|^2 \, dx + C\int_\mathbb{R} |\tu_x|^{1/2} |\psi_x|^2 \,dx \\ 
		& \le C \delta_S \left( G_1+G^S +  D_{u_1} \right).
		\end{align*}
		We estimate $S_9$ by using $v_x= \phi_x + \tv_x$ and \eqref{eq: L^infty bounds} as
		\begin{align*}
		S_9 & \le C \int_{\mathbb{R}} |v_x| |\psi_x|  |\omega| \, dx 
		\le \int_{\mathbb{R}} | \phi_x|  |\psi_x| | \omega | \, dx + \int_{\mathbb{R}} |\tv_x| |\psi_x |  |\omega | \, dx \\ 
		&\le C\left( \norm{\phi_x}_{L^\infty(\mathbb{R})} \norm{\psi_x}_{L^2 (\mathbb{R})} \norm{\omega}_{L^2(\mathbb{R})}+\norm{\tv_x}_{L^\infty(\mathbb{R})}  \norm{\psi_x}_{L^2 (\mathbb{R})} \norm{\omega}_{L^2(\mathbb{R})} \right)\\
		&\le C (\varepsilon_1 + \delta_S^2 ) \left(  D_{u_1}  + \mathcal{G}_w \right) \\ 
		&\le \frac{1}{8} \mathcal{G}_w + C (\varepsilon_1 + \delta_S^2) D_{u_1}.
		\end{align*}
		Therefore, combining all estimates for $S_i$ $i=1,\ldots,9,$ and using the smallness of $\delta_S$ and $\varepsilon_1$, there exist positive constant $C$ such that 
		\begin{align*}
		&\frac{1}{2}\frac{d}{dt} \int_\mathbb{R} v^{3/2} \omega^2 \, dx + \frac{d}{dt} \int_\mathbb{R} \omega \psi \, dx + \frac{d}{dt} \int_\mathbb{R} \frac{1}{v} (v^{5/2}-\tv^{5/2}) \tw \omega \, dx + \frac{1}{8} \left( \mathcal{G}_w + \mathcal{D}_{w_1} \right) \\ 
		&\le \delta_S | \dot{X}|^2 + C \|\psi_x \|_{L^2(\mathbb{R})}^2 
		+ C  \delta_S \left( \| \phi_x \|_{L^2 (\mathbb{R})}^2 +D_{u_2} +G_1 + G^S \right) .
		\end{align*}
		Integrating the above estimate over $[0,t]$ for any $t \le T$, we obtain 
		\begin{align}
		\begin{aligned}\label{est-vw}
		&\int_\mathbb{R} \frac{v^{3/2} \omega^2}{2} \, dx +  \int_\mathbb{R} \psi \omega \, dx + \int_\mathbb{R} \frac{1}{v} (v^{5/2} -\tv^{5/2}) \tw \omega \, dx + \frac{1}{8} \int_0^t \left(\mathcal{G}_w + \mathcal{D}_{w_1} \right) \, d s \\ 
		&\le  \int_\mathbb{R} \frac{v_0^{3/2} \omega_0^2}{2} \, dx +  \int_\mathbb{R} \psi_0 \omega_0 \, dx + \int_\mathbb{R} \frac{1}{v_0} (v_0^{5/2} -\tv(0,\cdot)^{5/2}) \tw(0,\cdot) \omega_0 \, dx \\
		& \quad + \delta_S \int_0^t| \dot{X}(s)|^2\,d s + C \int_0^t D_{u_1} \, d s +C \delta_S \int_0^t \left( \| (v-\tv)_x \|_{L^2 (\mathbb{R})}^2 +D_{u_2} +G_1+ G^S \right) \, d s. 
		\end{aligned}
		\end{align}
		However, we note that the estimates $G_{w} \le C \mathcal{G}_{w}$, $D_{w_1} \le C \mathcal{D}_{w_1}$, and
		\begin{align*}
		\int_\mathbb{R} |\omega|^2 \, dx \le C \int_\mathbb{R} \frac{v^{3/2} |\omega|^2}{2} \, dx \quad \text{and} \quad -\int_\mathbb{R} |\phi \tw  \omega| \, dx \le C \int_\mathbb{R} \frac{1}{v}(v^{5/2}-\tv^{5/2})\tw \omega \, dx
		\end{align*} 
		hold. Using these estimates to \eqref{est-vw}, we can complete the proof of Lemma \ref{lem: est-1}.
	\end{proof}

	\begin{lemma}\label{lem: est-2}
		Under the assumptions of Proposition \ref{apriori-estimate}, there exists a positive constant $C$ that is independent of $\delta_S$ and $\e_1$, such that for $0 \le t \le T$
		\begin{equation}\label{eq: est-2}
		\begin{aligned}
		&\int_{\mathbb{R}} (u-\tu)_x(w-\tw)_x \, dx+ \int_{0}^t \left( D_{w_1} + D_{w_2} \right) \, ds 
		\\
		&\le \int_{\mathbb{R}} (u_0-\tu(0,\cdot))_x(w_0-\tw(0,\cdot))_x \, dx+C \delta_S \int_0^t |\dot{X}(s)|^2 \, d s + C \int_{0}^{t} D_{u_2} ds \\ 
		&\quad +  C \delta_S \int_0^t \left(\norm{(v-\tv)_x}_{L^2(\mathbb{R})}^2+G_1 + G^S \right) \, d s+C(\varepsilon_1+\delta_S^2) \int_0^t \left(D_{u_1}+\norm{w-\tw}_{L^2(\mathbb{R})}^2 \right) \, d s.
		\end{aligned}
		\end{equation}
		
	\end{lemma}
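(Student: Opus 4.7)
The plan is to derive a mixed $H^1$-type identity for $\int_\R \psi_x \omega_x\,dx$ (with $\psi = u-\tu$ and $\omega = w-\tw$ as in \eqref{Notations: phi,psi,omega}) by multiplying the second equation of \eqref{Underlying wave with shift} by $-\omega_{xx}$ and the third by $-\psi_{xx}$, adding them, and integrating over $\R$. After integration by parts in $x$ on the time-derivative pieces, the left-hand side assembles to $\frac{d}{dt}\int_\R \psi_x\omega_x\,dx$, while the right-hand side splits into the pressure term $\int_\R (p(v)-p(\tv))_x \omega_{xx}\,dx$; the Navier-Stokes viscosity cross term $-\int_\R (\frac{u_x}{v}-\frac{\tu_x}{\tv})_x \omega_{xx}\,dx$; the Korteweg cross term from the $\psi$-equation $-\int_\R (\frac{w_x}{v^{5/2}}-\frac{\tw_x}{\tv^{5/2}})_x\omega_{xx}\,dx$; the Korteweg cross term from the $\omega$-equation $\int_\R(\frac{u_x}{v^{5/2}}-\frac{\tu_x}{\tv^{5/2}})_x\psi_{xx}\,dx$; and the shift contributions $-\dot{X}\int_\R(\tu_x\omega_{xx}+\tw_x\psi_{xx})\,dx$.

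The dominant good term is extracted from the first Korteweg cross term. Writing $\frac{w_x}{v^{5/2}}-\frac{\tw_x}{\tv^{5/2}} = \frac{\omega_x}{v^{5/2}}+\tw_x\big(\frac{1}{v^{5/2}}-\frac{1}{\tv^{5/2}}\big)$ and differentiating, its leading piece produces $-\int_\R \frac{|\omega_{xx}|^2}{v^{5/2}}\,dx$, which is equivalent to $-c\,D_{w_2}$ for some $c>0$. Symmetrically, the second Korteweg cross term yields $\int_\R \frac{|\psi_{xx}|^2}{v^{5/2}}\,dx \le C D_{u_2}$, a quantity that appears on the right-hand side of \eqref{eq: est-2} and is already controlled by Lemma \ref{lem: est-0}. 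The Navier-Stokes viscosity cross term contributes the leading piece $-\int_\R \frac{\psi_{xx}\omega_{xx}}{v}\,dx$, which Young's inequality splits into a small fraction of $D_{w_2}$ (absorbed by the good term) and a constant multiple of $D_{u_2}$.

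The remaining contributions---the pressure term, the shift pieces, and all lower-order remainders arising from differentiating the coefficients $v^{-1}$, $v^{-5/2}$, and $v^{-7/2}$---will be handled by Young's inequality combined with the shock-profile bounds from Lemma \ref{lem:shock-property} (in particular $|\tv_x|\le C\delta_S^2$ and $|\tw_x|,|\tw_{xx}|\le C\delta_S|\tv_x|$), the $L^\infty$ bounds in \eqref{eq: L^infty bounds}, and Lemma \ref{lem: Useful bad terms estimates}. The main technical obstacle I expect is the Korteweg remainder $\frac{5}{2}\int_\R \frac{v_x\omega_x\omega_{xx}}{v^{7/2}}\,dx$ coming from the chain rule on $v^{-5/2}$: splitting $v_x = \phi_x + \tv_x$ and then applying Young's inequality produces terms of the form $C\varepsilon_1^2\norm{\omega_x}_{L^2}^2 + C\delta_S^4\norm{\omega_x}_{L^2}^2$, whose absorption into the right-hand side is precisely what forces the prefactor $(\varepsilon_1+\delta_S^2)$ on $D_{u_1}$ and $\norm{w-\tw}_{L^2}^2$ in \eqref{eq: est-2}.

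Integrating the resulting pointwise-in-time identity over $[0,t]$ will then produce the estimate \eqref{eq: est-2} with $D_{w_2}$ as the principal new dissipation on the left. The accompanying $\int_0^t D_{w_1}\,ds$ that also appears on the left of \eqref{eq: est-2} is incorporated by adding a small, appropriately scaled multiple of the estimate already obtained in Lemma \ref{lem: est-1}, which supplies $\int_0^t D_{w_1}\,ds$ at the cost of only quantities already present on the right of \eqref{eq: est-2}.
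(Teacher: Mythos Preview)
Your setup is equivalent to the paper's (multiplying by $-\omega_{xx},-\psi_{xx}$ versus differentiating in $x$ and multiplying by $\omega_x,\psi_x$), and your treatment of the viscosity, Korteweg, and shift pieces is fine. The gap is in the pressure term and, correspondingly, in how $D_{w_1}$ arrives on the left-hand side.

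You propose to treat $\int_\R (p(v)-p(\tv))_x\,\omega_{xx}\,dx$ as a remainder via Young's inequality, and then to borrow $\int_0^t D_{w_1}\,ds$ from Lemma \ref{lem: est-1}. Neither step works as stated. First, a naive Young estimate gives $\tfrac{1}{8}D_{w_2}+C\|\phi_x\|_{L^2}^2$ with an order-one constant in front of $\|\phi_x\|_{L^2}^2$, whereas the statement of Lemma \ref{lem: est-2} only allows $C\delta_S\|\phi_x\|_{L^2}^2$ on the right. Second, Lemma \ref{lem: est-1} carries $C\int_0^t D_{u_1}\,ds$ with an order-one $C$ on its right-hand side; adding any fixed multiple of it would place an order-one $D_{u_1}$ contribution on the right of \eqref{eq: est-2}, where only $C(\varepsilon_1+\delta_S^2)\int_0^t D_{u_1}\,ds$ is permitted.

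The missing idea is that the pressure term itself is the source of $D_{w_1}$. Using $v_x=-v^{5/2}w$ one has $(p(v)-p(\tv))_x=-p'(v)v^{5/2}\omega-\tw\big(p'(v)v^{5/2}-p'(\tv)\tv^{5/2}\big)$; after one integration by parts the leading piece of $\int_\R (p(v)-p(\tv))_x\,\omega_{xx}\,dx$ becomes $\int_\R p'(v)v^{5/2}|\omega_x|^2\,dx<0$, which, moved to the left, yields $-\int_\R p'(v)v^{5/2}|\omega_x|^2\,dx\sim D_{w_1}$. The remaining commutator $\int_\R (p'(v)v^{5/2})_x\,\omega\,\omega_x\,dx$ is genuinely lower order, bounded by $C(\varepsilon_1+\delta_S^2)\|\omega\|_{L^2}^2$ plus a small fraction of $D_{w_1}$, which matches the allowed right-hand side. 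With this correction no appeal to Lemma \ref{lem: est-1} is needed, and the $C\delta_S\|\phi_x\|_{L^2}^2$ budget is respected because the large $\|\phi_x\|_{L^2}^2$ term never appears.
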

	\begin{proof}
		Differentiating $\eqref{Underlying wave NSK-w}_2$ with respect to $x$ and then multiplying the result by $\omega_x$, we have
		\begin{equation} \label{psi_tx omega_x}
		\begin{aligned}
		& \psi_{tx} \omega_x - \left( p'(v)v^{5/2} \omega + \tw \left( p'(v)v^{5/2}-p'(\tv)\tv^{5/2} \right) \right)_x \omega_x-\left( \frac{\omega_x}{v^{5/2}} + \tw_x \left( \frac{1}{v^{5/2}} - \frac{1}{\tv^{5/2}} \right) \right)_{xx} \omega_x \\
		&\; =\left( \frac{\psi_x}{v}+\tu_x \left( \frac{1}{v} - \frac{1}{\tv} \right) \right)_{xx} \omega_x + \dot{X} \tu_{xx} \omega_x .
		\end{aligned}
		\end{equation}
		Similarly, we differentiate $\eqref{Underlying wave NSK-w}_3$ with respect to $x$ and multiply the result by $\psi_x$ to obtain
		\begin{equation} \label{omega_tx psi_x}
		\begin{aligned}
		& \omega_{tx} \psi_x = -\left( \frac{\psi_x}{v^{5/2}}+\tu_x \left( \frac{1}{v^{5/2}} - \frac{1}{\tv^{5/2}} \right) \right)_{xx} \psi_x + \dot{X} \tw_{xx} \psi_x.
		\end{aligned}
		\end{equation}
		Then, adding the equation \eqref{psi_tx omega_x} and \eqref{omega_tx psi_x}, we have
		\begin{equation*}
		\begin{aligned}
		& \left( \psi_x \omega_x \right)_t - \left( p'(v)v^{5/2} \omega + \tw \left( p'(v)v^{5/2}-p'(\tv)\tv^{5/2} \right) \right)_x \omega_x-\left( \frac{\omega_x}{v^{5/2}} + \tw_x \left( \frac{1}{v^{5/2}} - \frac{1}{\tv^{5/2}} \right) \right)_{xx} \omega_x \\
		&\; =\dot{X} \left( \tu_{xx} \omega_x + \tw_{xx} \psi_x \right) + \left( \frac{\psi_x}{v}+\tu_x \left( \frac{1}{v} - \frac{1}{\tv} \right) \right)_{xx} \omega_x  -\left( \frac{\psi_x}{v^{5/2}}+\tu_x \left( \frac{1}{v^{5/2}} - \frac{1}{\tv^{5/2}} \right) \right)_{xx} \psi_x .
		\end{aligned}
		\end{equation*}
		We integrate the above equation over $\mathbb{R}$  and using integration-by-parts to derive
		\begin{align*}
		\frac{d}{dt}\int_{\mathbb{R}}\psi_x\omega_x \, dx+\mathcal{G}_{w_2}+\mathcal{D}_{w_2}=\sum\limits_{i=1}^{15}T_i,
		\end{align*}
		where
		\begin{align*}
		\mathcal{G}_{w_2}:=-\int_{\mathbb{R}} p'(v)v^{\frac{5}{2}} |\omega_x|^2dx,\quad
		\mathcal{D}_{w_2}:=\int_{\mathbb{R}} \frac{|\omega_{xx}|^2}{v^{5/2}} dx,
		\end{align*}
		and
		\begin{align*}
		T_1&:=\dot{X} \int_\mathbb{R} \left( \tu_{xx} \omega_x+ \tw_{xx} \psi_x \right) \, dx, &
		T_2&:=\int_\mathbb{R} (p'(v)v^{5/2})_x \omega \omega_x \, dx, \\
		T_3&:=\int_\mathbb{R} \tw_x \left(p'(v)v^{5/2} -p'(\tv)\tv^{5/2} \right) \omega_x \, dx, &
		T_4&:=\int_\mathbb{R} \tw \left(p'(v)v^{5/2} -p'(\tv)\tv^{5/2} \right)_x \omega_x \, dx,\\
		T_5&:=-\int_\mathbb{R} \omega_x \left( \frac{1}{v^{5/2}} \right)_x \omega_{xx} \, dx,\\
		T_6&:=-\int_\mathbb{R} \tw_{xx} \left( \frac{1}{v^{5/2}}-\frac{1}{\tv^{5/2}}\right) \omega_{xx} \, dx, &
		T_7&:=-\int_\mathbb{R} \tw_x \left( \frac{1}{v^{5/2}} -\frac{1}{\tv^{5/2}}\right)_x \omega_{xx} \, dx,\\
		T_8&:=-\int_\mathbb{R} \frac{\psi_{xx}}{v} \omega_{xx} \, dx, &
		T_9&:=\int_\mathbb{R} \psi_x \left( \frac{1}{v} \right)_x \omega_{xx} \, dx, \\
		T_{10}&:=\int_{\mathbb{R}} \tu_{xx} \left( \frac{1}{v}-\frac{1}{\tv} \right)\omega_{xx} \, dx, &
		T_{11}&:=\int_\mathbb{R} \tu_x \left( \frac{1}{v}-\frac{1}{\tv} \right)_x \omega_{xx} \, dx, \\
		T_{12}&:=\int_\mathbb{R}  \frac{|\psi_{xx}|^2}{v^{5/2}} \, dx, &
		T_{13}&:=\int_\mathbb{R} \psi_{xx} \psi_x \left(\frac{1}{v^{5/2}}\right)_x \, dx, \\
		T_{14}&:=\int_\mathbb{R} \psi_{xx}\tu_{xx} \left( \frac{1}{v^{5/2}}-\frac{1}{\tv^{5/2}} \right) \, dx, &
		T_{15}&:=\int_\mathbb{R} \psi_{xx}\tu_x \left(\frac{1}{v^{5/2}}-\frac{1}{\tv^{5/2}}\right)_x  \, dx.
		\end{align*}
		Again, we estimate from $T_1$ to $T_{15}$ separately.\\

		\noindent $\bullet$ (Estimate of $T_1$): We estimate $T_1$ by using  $|\tw_{xx}| \le C (|\tv_x|^3 + |\tv_x||\tv_{xx}| + |\tv_{xxx}|) \le C \delta_S |\tv_x|$ and $|\tu_{xx}| \le C \delta_S |\tu_x|$ , and Young's inequality as
		\begin{align*}
		|T_1|
		&\le C \delta_S |\dot{X}| \left( \sqrt{\int_\mathbb{R} |\tu_x|^2 \, dx} \sqrt{\int_\mathbb{R} |\omega_x|^2 \, dx } + \sqrt{\int_\mathbb{R} |\tv_x|^2 \, dx} \sqrt{\int_\mathbb{R} |\psi_x|^2 \, dx } \, \right) \\ 
		&\le C \delta_S^{\frac{5}{2}} |\dot{X}| \left( \norm{\omega_x}_{L^2(\mathbb{R})} + \norm{\psi_x}_{L^2 (\mathbb{R})} \right) \\
		&\le \delta_S |\dot{X}|^2 + \frac{1}{8} \mathcal{G}_{w_2} + C \delta_S^4 D_{u_1}.
		\end{align*}
		\noindent $\bullet$ (Estimate of $T_2$): We use $|(p'(v)v^{5/2} )_x| \le C |v_x| \le C (|\phi_x| + |\tv_x|)$ and $\norm{\phi_x}_{L^\infty(\mathbb{R})} + \norm{\tv_x}_{L^\infty(\mathbb{R})} \le C (\varepsilon_1 + \delta_S^2) $ to estimate $T_2$ as
		\begin{align*}
		|T_2|&\le C \int_{\mathbb{R}} |v_x| |\omega| |\omega_x| \, dx\le C(\varepsilon_1+ \delta_S^2) \norm{\omega}_{L^2(\mathbb{R})} \norm{\omega_x}_{L^2(\mathbb{R})} \\
		&\le C(\varepsilon_1+ \delta_S^2) \left(\norm{\omega}_{L^2(\mathbb{R})}^2 + \norm{\omega_x}_{L^2(\mathbb{R})}^2 \right) \\ 
		&\le \frac{1}{8} \mathcal{G}_{w_2} +C(\varepsilon_1+ \delta_S^2)\norm{\omega}_{L^2(\mathbb{R})}^2 . \\
		\end{align*}
		\noindent $\bullet$ (Estimate of $T_3$ and $T_4$): We first note that the following inequalities hold:
		\begin{equation} \label{eq: nolinear estimate for T}
		\begin{aligned}
		&\left|p'(v)v^{5/2}-p'(\tv)\tv^{5/2} \right| \le C|v-\tv|, \\
		&\left|(p'(v)v^{5/2}-p'(\tv)\tv^{5/2} )_x \right| \le C\left(|(v-\tv)_x| + |\tv_x| |v-\tv|\right).\\
		\end{aligned}
		\end{equation}
		Then, we estimate $T_3$ using $\eqref{eq: nolinear estimate for T}_1$, $|\tw_x| \le C (|\tv_x|^2 + |\tv_{xx}|) \le C \delta_S |\tv_x|$ in Lemma \ref{lem:shock-property}, and Lemma \ref{lem: Useful bad terms estimates} as 
		\begin{align*}
		|T_3|&\le C\delta_S \int_{\mathbb{R}} |\tv_x| |\phi| |\omega_x| \, dx\le C\delta_S \left( \int_{\mathbb{R}} |\tv_x|^2|\phi|^2 \, dx + \int_\mathbb{R} |\omega_x|^2 \, dx \right)\\
		&\le \frac{1}{8} \mathcal{G}_{w_2} + C \delta_S^3 (G_1 + G^S).
		\end{align*}
		Similarly, we use $\eqref{eq: nolinear estimate for T}_2$, $|\tw| \le C |\tv_x|$, and Lemma \ref{lem: Useful bad terms estimates} to estimate $T_4$ as
		\begin{align*}
		|T_4|&\le C \int_{\mathbb{R}} |\tv_x| (|\phi_x|+|\tv_x||\phi|) |\omega_x| dx \le C \int_\mathbb{R} |\tv_x| |\phi_x|^2 \,dx + C \int_\mathbb{R} |\tv_x|^2 |\phi|^2 \, dx + C \int_\mathbb{R} |\tv_x| |\omega_x|^2 \, dx \\ 
		&\le \frac{1}{8} \mathcal{G}_{w_2} + C \delta_S^2 \left(\|\phi_x\|_{L^2(\mathbb{R})}^2 + G_1 + G^S \right).
		\end{align*}
		\noindent $\bullet$ (Estimate of $T_5$): We estimate $T_5$ by using $\left|\left(\frac{1}{v^{5/2}}\right)_x\right|\le C |v_x| \le C (|\phi_x| + |\tv_x|) $ and $\|\phi_x\|_{L^\infty(\mathbb{R})} + \|\tv_x\|_{L^\infty(\mathbb{R})} \le C (\varepsilon_1 +\delta_S^2)$ as
		\begin{align*}
		|T_5|&\le C \int_{\mathbb{R}} |v_x| |\omega_x| |\omega_{xx}| \, dx\le C(\varepsilon_1+ \delta_S^2) \left( \int_{\mathbb{R}} |\omega_x|^2 \, dx +\int_\mathbb{R} |\omega_{xx}|^2 \, dx \right)\\
		&\le \frac{1}{8} \mathcal{G}_{w_2} +\frac{1}{8} \mathcal{D}_{w_2}.
		\end{align*}
		\noindent $\bullet$ (Estimate of $T_6$ and $T_7$): We use \eqref{eq: (1/v^a-1/tv^a) and (1/v^a-1/tv^a)_x}, $|\tw_{xx}| \le C(|\tv_x|^3 + |\tv_x||\tv_{xx}| + |\tv_{xxx}|)\le C\delta_S^2 |\tv_x|$ in Lemma \ref{lem:shock-property}, and Lemma \ref{lem: Useful bad terms estimates} to estimate $T_6$ as
		\begin{align*}
		|T_6|&\le C\delta_S^2 \int_{\mathbb{R}} |\tv_x||\phi| |\omega_{xx}| \, dx \le C\delta_S^2 \left( \int_{\mathbb{R}} |\tv_x|^2|\phi|^2 \, dx + \int_\mathbb{R} |\omega_{xx}|^2 \, dx \right)\\
		&\le \frac{1}{8} \mathcal{D}_{w_2} + C\delta_S^4 \left(G_1+G^S \right).
		\end{align*}
		To estimate $T_7$, we use \eqref{eq: (1/v^a-1/tv^a) and (1/v^a-1/tv^a)_x}, $|\tw_x| \le C (|\tv_x|^2 + |\tv_{xx}| ) \le C \delta_S |\tv_x|$, and Lemma \ref{lem: Useful bad terms estimates} to derive
		\begin{align*}
		|T_7|&\le C\delta_S \int_{\mathbb{R}} |\tv_x| \left(|\phi_x|+|\tv_x||\phi|\right) |\omega_{xx}| \,dx  \\
		&\le C \int_\mathbb{R} |\tv_x| |\phi_x|^2 \, dx + C \int_\mathbb{R} |\tv_x|^2 |\phi|^2 \, dx + C \int_\mathbb{R} |\tv_x| |\omega_{xx}|^2 \, dx   \\
		&\le \frac{1}{8} \mathcal{G}_{w_2} + C \delta_S^2 \left(\|\phi_x\|_{L^2(\mathbb{R})}^2 + G_1 + G^S \right).
		\end{align*}
		\noindent $\bullet$ (Estimate of $T_8$ and $T_9$): By Young's inequality, there exists a positive constant $C>0$ such that
		\begin{align*}
		|T_8|
		&\le C \int_{\mathbb{R}} v^{1/2} |\psi_{xx}|^2 \, dx + \frac{1}{8} \int_\mathbb{R} \frac{|\omega_{xx}|^2}{v^{5/2}} \, dx \\
		&\le C D_{u_2}+\frac{1}{8} \mathcal{D}_{w_2}.
		\end{align*}
		We estimate $T_9$ by using $\left|\left(\frac{1}{v}\right)_x\right|\le C |v_x| \le C (|\phi_x| + |\tv_x|)$ and $\|\phi_x\|_{L^\infty(\mathbb{R})} + \|\tv_x\|_{L^\infty(\mathbb{R})} \le C (\varepsilon_1 + \delta_S^2)$, and Young's inequality as
		\begin{align*}
		|T_9|&\le C(\varepsilon_1+\delta_S^2) \int_{\mathbb{R}} |\psi_x| |\omega_{xx}| \, dx\le C(\varepsilon_1 + \delta_S^2 ) \left( \int_\mathbb{R} |\psi_x|^2 \, dx + \int_\mathbb{R} |\omega_{xx}|^2 \, dx \right) \\ 
		&\le \frac{1}{8} \mathcal{D}_{w_2} + C(\varepsilon_1 + \delta_S^2 ) D_{u_1}.
		\end{align*}
		\noindent$\bullet$ (Estimate of $T_{10}$ and $T_{11}$): By using $|\tu_{xx}|\le C\delta_S|\tu_x|$ and Lemma \ref{lem: Useful bad terms estimates}, we estimate $T_{10}$ as
		\begin{align*}
		|T_{10}|&\le C\delta_S \int_{\mathbb{R}} |\tu_x| |\phi| |\omega_{xx}|\, dx \le C\delta_S \left( \int_{\mathbb{R}} |\tu_x|^2|\phi|^2 \, dx + \int_\mathbb{R} |\omega_{xx}|^2 \, dx \right) \\
		&\le \frac{1}{8} \mathcal{D}_{w_2} + C\delta_S^2 \left(G_1+G^S \right).
		\end{align*}
		Also, we use $|\tu_x| \sim |\tv_x|$, $\left|(\frac{1}{v}-\frac{1}{\tv})_x \right| \le C (|\phi_x| + |\tv_x| |\phi| ) $, and Lemma \ref{lem: Useful bad terms estimates} to estimate $T_{11}$ as 
		\begin{align*}
		|T_{11}|&\le C  \int_{\mathbb{R}} |\tu_x|\left(|\phi_x|+|\tv_x||\phi|\right) |\omega_{xx}| dx \\
		&\le C \int_\mathbb{R} |\tu_x| |\phi_x|^2 \, dx + C \int_\mathbb{R} |\tv_x|^2 |\phi|^2 \, dx + \int_\mathbb{R} |\tv_x| |\omega_{xx}|^2 \, dx \\
		&\le \frac{1}{8} \mathcal{D}_{w_2} +  C \delta_S^2\left( \norm{\phi_x}_{L^2(\mathbb{R})}^2 + G_1+G^S \right).
		\end{align*}
		\noindent $\bullet$ (Estimate of $T_{12}$ and $T_{13}$): For $T_{12}$, we can find a positive constant $C>0$ such that
		\begin{align*}
		|T_{12}|\le C D_{u_2}.
		\end{align*}
		To estimate $T_{13}$, we use $|(\frac{1}{v^{5/2}})_x| \le C |v_x| \le C(|\phi_x| + |\tv_x|)$ and Young's inequality as
		\begin{align*}
		|T_{13}| &\le C \left( \norm{ \phi_x}_{L^\infty(\mathbb{R})} + \norm{\tv_x}_{L^\infty(\mathbb{R})} \right) \norm{\psi_x}_{L^2 (\mathbb{R})} \norm{\psi_{xx}}_{L^2 (\mathbb{R})} \\ 
		&\le C(\varepsilon_1 + \delta_S^2) \left(\lVert \psi_x \rVert^2_{L^2(\mathbb{R})}+\lVert \psi_{xx} \rVert^2_{L^2(\mathbb{R})}\right) \\ 
		&\le C(\varepsilon_1 + \delta_S^2) (D_{u_1} + D_{u_2}) .  
		\end{align*}
		\noindent $\bullet$ (Estimate of $T_{14}$ and $T_{15}$): We estimate $T_{14}$ by using \eqref{eq: (1/v^a-1/tv^a) and (1/v^a-1/tv^a)_x}, $|\tu_{xx}| \le C \delta_S |\tu_x|$ in Lemma \ref{lem:shock-property} and Lemma \ref{lem: Useful bad terms estimates} as
		\begin{align*}
		|T_{14}| &\le C\delta_S \int_\mathbb{R} |\tu_x| |\psi_{xx}| |\phi| \, dx \le C \delta_S \int_\mathbb{R} |\psi_{xx}|^2 \, dx + C \int_\mathbb{R} |\tu_x|^2 |\phi|^2 \, dx \\ 
		&\le C \delta_S D_{u_2} + C \delta_S^2 (G_1 + G^S).
		\end{align*}
		Finally, by using \eqref{eq: (1/v^a-1/tv^a) and (1/v^a-1/tv^a)_x}, $|\tv_x| \sim |\tv_x|$, and Lemma \ref{lem: Useful bad terms estimates}, we estimate $T_{15}$ as
		\begin{align*}
		|T_{15}|
		&\le C \int_\mathbb{R} |\tv_x| |\psi_{xx}| (|\phi_x| + |\tv_x||\phi|) \, dx \le C \int_\mathbb{R} |\tv_x| |\psi_{xx}|^2 \, dx + C \int_\mathbb{R} |\tv_x| |\phi_x| \, dx + \int_\mathbb{R} |\tv_x|^2 |\phi|^2 \, dx \\
		&\le C\delta_S^2\left( D_{u_2}+\lVert \phi_x \rVert^2_{L^2(\mathbb{R})}+ G_1+G^S \right).    
		\end{align*}
		Therefore, combining all the above estimates on $T_i$ for $i=1,2,\ldots, 15$ and using smallness of $\delta_S$ and $\e_1$, we get the following result
		\begin{align*}
		&\frac{d}{dt}\int_{\mathbb{R}} \psi_x\omega_x \, dx+\frac{1}{8}\left( \mathcal{G}_{w_2} + \mathcal{D}_{w_2} \right) \\
		&\le \delta_S |\dot{X}|^2 + C D_{u_2}+C \delta_S \left(\norm{\phi_x}_{L^2(\mathbb{R})}^2+G_1 + G^S \right)+C(\varepsilon_1+\delta_S^2) \left(D_{u_1}+\norm{\omega}_{L^2(\mathbb{R})}^2 \right).
		\end{align*}
		Integrating the above inequality over $[0,t]$ for any $t \le T$, we obtain 
		\begin{align*}
		&\int_{\mathbb{R}} (u-\tu)_x(w-\tw)_x \, dx+\frac{1}{8} \int_{0}^t \left( \mathcal{G}_{w_2} + D_{w_2} \right) \, ds 
		\\
		&\le \int_{\mathbb{R}} (u_0-\tu(0,\cdot))_x(w_0-\tw(0,\cdot))_x \, dx+ \frac{\delta_S}{2} \int_0^t |\dot{X}(s)|^2 \, d s + C \int_{0}^{t} D_{u_2} \, ds \\ 
		&\quad +  C \delta_S^2 \int_0^t \left(\norm{(v-\tv)_x}_{L^2(\mathbb{R})}^2+G_1 + G^S \right) \, d s+C(\varepsilon_1+\delta_S^2) \int_0^t \left(D_{u_1}+\norm{w-\tw}_{L^2(\mathbb{R})}^2 \right) \, d s.
		\end{align*}
		However, since the bound $D_{w_1} \le C \mathcal{G}_{w_2}$ and $D_{w_2} \le C \mathcal{D}_{w_2}$ hold, this completes the proof of Lemma \ref{lem: est-2}.
		
	\end{proof}

	\subsection{Combining Lemma \ref{lem: est-0}, Lemma \ref{lem: est-1}, and Lemma \ref{lem: est-2}}
	
	We now combine the estimates from Lemma Lemma \ref{lem: est-0}, Lemma \ref{lem: est-1}, and Lemma \ref{lem: est-2}. Since the right-hand side of each term is related to each other, we need to carefully combine those estimates, considering the order. We will summarize the steps as follows.\\
	
	\noindent $\bullet$ (Step 1): We combine Lemma \ref{lem: est-2} and Lemma \ref{lem: est-0} with an appropriate weight to derive the estimate \eqref{eq: est-3}.\\
	
	\noindent $\bullet$ (Step 2): Then, we combine \eqref{eq: est-3} from (Step 1) and Lemma \ref{lem: est-1}, again by considering an appropriate weights to derive \eqref{eq: est-4}.\\
	
	\noindent $\bullet$ (Step 3): Finally, we combine \eqref{eq: est-4} from (Step 2) and the $L^2$ estimates in Lemma \ref{Main Lemma} to conclude the desired $H^1$-estimate \eqref{eq:H1-est} holds. This completes the proof of Lemma \ref{Main Lemma 2}.\\
	
	In the following, we present the details of each step explained above.\\
	
	\noindent $\bullet$ (Step 1): We begin by expressing the inequality \eqref{eq: est-2} from Lemma \ref{lem: est-2} using the notation $(\phi,\psi,\omega)$ in \eqref{Notations: phi,psi,omega}. 
	\begin{equation*} 
	\begin{aligned}
	&\int_{\mathbb{R}} \psi_x \omega_x \, dx+ \int_{0}^t \left( \norm{\omega_x}_{L^2(\mathbb{R})}^2 + \norm{\omega_{xx}}_{L^2(\mathbb{R})}^2 \right) \, ds 
	\\
	&\le \int_{\mathbb{R}}\psi_{0x} \omega_{0x} \, dx+C \delta_S \int_0^t |\dot{X}|^2 \, d s + C_2 \int_{0}^{t} \norm{\psi_{xx}}_{L^2(\mathbb{R})}^2 \, ds \\ 
	&\quad +  C \delta_S \int_0^t \left(\norm{\phi_x}_{L^2(\mathbb{R})}^2+G_1 + G^S \right) \, d s+C(\varepsilon_1+\delta_S^2) \int_0^t \left( \norm{\psi_x}_{L^2(\mathbb{R})}^2+\norm{\omega}_{L^2(\mathbb{R})}^2 \right) \, d s,
	\end{aligned}
	\end{equation*}
	where $C$ and $C_2$ are positive constants which are independent of $\delta_S$ and $\varepsilon_1$. After rearranging the first terms $\int_\mathbb{R} \psi_x \omega_x \, dx$ on the left-hand side to the right-hand side, and applying Young's inequality, we obtain 
	\begin{equation} \label{eq: est-2,1}
	\begin{aligned}
	&\int_{0}^t \left( \norm{\omega_x}_{L^2(\mathbb{R})}^2 + \norm{\omega_{xx}}_{L^2(\mathbb{R})}^2 \right) \, ds 
	\\
	&\le \frac{1}{2} \norm{\psi_x}_{L^2(\mathbb{R})}^2+\frac12 \norm{\omega_x}_{L^2(\mathbb{R})}^2  + \frac12 \norm{\psi_{0x}}_{L^2(\mathbb{R})}^2+\frac12\norm{\omega_{0x}}_{L^2(\bbr)} \\
	&\quad +C \delta_S \int_0^t |\dot{X}|^2 \, d s + C_2 \int_{0}^{t} \norm{\psi_{xx}}_{L^2(\mathbb{R})}^2 \, ds \\ 
	&\quad +  C \delta_S \int_0^t \left(\norm{\phi_x}_{L^2(\mathbb{R})}^2+G_1 + G^S \right) \, d s+C(\varepsilon_1+\delta_S^2) \int_0^t \left( \norm{\psi_x}_{L^2(\mathbb{R})}^2+\norm{\omega}_{L^2(\mathbb{R})}^2 \right) \, d s.
	\end{aligned}
	\end{equation}
	Then, multiplying the inequality \eqref{eq: est-2,1} by $\frac{1}{2 \max(1,C_2)}$ and adding the inequality \eqref{eq: est-0} from Lemma \ref{lem: est-0}, we have
	\begin{equation*} 
	\begin{aligned}
	&\frac{1}{2} \norm{\psi_x}_{L^2(\mathbb{R})}^2+\frac12 \norm{\omega_x}_{L^2(\mathbb{R})}^2 +  \frac{1}{2} \int_0^t \norm{\psi_{xx}}_{L^2(\mathbb{R})}^2 \, d s + \frac{1}{2 \max(1,C_2)} \int_{0}^t  \norm{\omega_x}_{H^1(\mathbb{R})}^2 \, ds  \\
	&\le C \norm{\psi_{0x}}_{L^2(\mathbb{R})}^2+C \norm{\omega_{0x}}_{L^2(\mathbb{R})}^2   +C\delta_S \int_0^t |\dot{X}(s)|^2 \, d s + C \int_0^t \norm{\phi_x}_{L^2 (\mathbb{R})}^2 \, d s +C \delta_S \int_0^t  \left( G_1 + G^S \right) \, d s \\ 
	& \quad +C (\varepsilon_1+\delta_S) \int_0^t  \left( \norm{\phi_x}_{L^2(\mathbb{R})}^2+ \norm{\psi_x}_{L^2(\mathbb{R})}^2 + \norm{\omega}_{H^2(\mathbb{R})}^2  \right) \, d s .\\  
	\end{aligned}
	\end{equation*} 
	Finally, using the smallness of $\delta_S$ and $\varepsilon_1$ in the above inequality, we can arrive at the following conclusion:
	\begin{equation} \label{eq: est-3}
	\begin{aligned}
	&\norm{ \psi_x }_{L^2(\mathbb{R})}^2+\norm{ \omega_x }_{L^2(\mathbb{R})}^2+  \int_0^t \norm{\psi_{xx}}_{L^2(\mathbb{R})}^2 \, d s + \int_{0}^t  \norm{\omega_x}_{H^1(\mathbb{R})}^2 \, ds  \\
	&\le C \norm{\psi_{0x}}_{L^2(\mathbb{R})}^2+C\norm{\omega_{0x}}_{L^2(\R)}  +C\delta_S \int_0^t |\dot{X}(s)|^2 \, d s + C_3 \int_0^t \norm{\phi_x}_{L^2 (\mathbb{R})}^2 \, d s \\
	&\quad +C \delta_S \int_0^t  \left( G_1 + G^S \right) \, d s  +C (\varepsilon_1+\delta_S) \int_0^t  \left(  \norm{\psi_x}_{L^2(\mathbb{R})}^2 + \norm{\omega}_{L^2(\mathbb{R})}^2  \right) \, d s .\\  
	\end{aligned}
	\end{equation} 
	Here, we use the crucial fact that $C$ and $C_2$ are constants independent of $\delta_S$ and $\varepsilon_1$. \\
	
	\noindent $\bullet$ (Step 2): Expressing the inequality \eqref{eq: est-1} in Lemma \ref{lem: est-1} using the notation $(\phi, \psi, \omega)$, it reads as
	\begin{equation*} 
	\begin{aligned}
	&\norm{\omega}_{L^2(\mathbb{R})}^2+  \int_\mathbb{R} \psi \omega \, dx  +  \int_0^t \left( \norm{\omega}_{L^2(\mathbb{R})}^2 + \norm{\omega_x}_{L^2(\mathbb{R})}^2 \right) \, d s \\ 
	&\le  C \norm{\omega_0}_{L^2(\mathbb{R})}^2 + C \int_\mathbb{R} \psi_0 \omega_0 \, dx + \int_\mathbb{R} | \tw \phi \omega |\, dx + C \int_\mathbb{R}  \left|\tw(0,\cdot) \phi_0 \omega_0 \, \right|dx \\
	& \quad + C \delta_S \int_0^t | \dot{X}|^2 \, d s + C \int_0^t \norm{\psi_x}_{L^2(\mathbb{R})}^2 \, d s +C \delta_S \int_0^t \left( \norm{\phi_x}_{L^2 (\mathbb{R})}^2 +\norm{\psi_{xx}}_{L^2(\mathbb{R})}^2+G_1+ G^S \right) \, d s .
	\end{aligned}
	\end{equation*}
	After moving the second term $\int_\mathbb{R} \psi \omega \, dx$  from the left-hand side to the right-hand side in the above inequality, and applying Young's inequality, $|\tw| \le C |\tv_x|$, and Lemma \ref{lem:shock-property}, we obtain 
	\begin{equation} \label{eq: est-1,1}
	\begin{aligned}
	\norm{\omega}_{L^2(\mathbb{R})}^2&+  \int_0^t \left( \norm{\omega}_{L^2(\mathbb{R})}^2 + \norm{\omega_x}_{L^2(\mathbb{R})}^2 \right) \, d s \\ 
	&\le C\left(\norm{\phi_0}_{L^2(\mathbb{R})}^2 + \norm{\psi_0}_{L^2(\R)}^2+\norm{\omega_0}_{L^2(\R)}^2\right)+ \frac{1}{2} \left( \norm{ \psi}_{L^2(\mathbb{R})}^2 + \norm{\omega}_{L^2(\mathbb{R})}^2 \right) \\
	&\quad + C \delta_S \left( \norm{ \phi}_{L^2(\mathbb{R})}^2 + \norm{\omega}_{L^2(\mathbb{R})}^2 \right) + C \delta_S \int_0^t | \dot{X}|^2 \, ds+ C \int_0^t \norm{\psi_x}_{L^2(\mathbb{R})}^2 \, d s \\
	&\quad +C \delta_S \int_0^t \left( \norm{\phi_x}_{L^2 (\mathbb{R})}^2 +\norm{\psi_{xx}}_{L^2(\mathbb{R})}^2+G_1+ G^S \right) \, d s .
	\end{aligned}
	\end{equation}
	
	On the other hand, it follows from the definition of $\omega$ that 
	
	\begin{align*}
	\omega = w-\tw=-\frac{\phi_x}{v^{5/2}}-\tv_x\left(\frac{1}{v^{5/2}}-\frac{1}{\tv^{5/2}}\right).
	\end{align*}
	We use \eqref{eq: (1/v^a-1/tv^a) and (1/v^a-1/tv^a)_x} to obtain
	\[\|\phi_x\|_{L^2(\R)}\le C\|\omega\|_{L^2(\R)}+C\int_\R |\tv_x||v-\tv|\,dx,\]
	which implies that there exists a positive constant $c_*$ independent of $\delta_S$, $\varepsilon_1$ such that
	\begin{align*}
	\|\phi_x\|_{L^2(\R)}^2&\le \frac{1}{c_*}\|\omega\|_{L^2(\R)}^2+C\int_\R |\tv_x|\,dx\int_{\R}|\tv_x||p(v)-p(\tv)|^2\,dx\\
	&\le \frac{1}{c_*}\|\omega\|_{L^2(\R)}^2+C\delta_S(G_1+G^S),
	\end{align*}
	which yields
	\begin{equation} \label{eq: omega 2-norm inequality}
	\begin{aligned}
	c_* \norm{\phi_x}_{L^2(\mathbb{R})}^2\le \norm{\omega}_{L^2(\mathbb{R})}^2 + C\delta_S(G_1 +G^S).
	\end{aligned}
	\end{equation}
	Then, multiplying the inequality \eqref{eq: est-3} by $\frac{c_*}{4 \max(1,C_3)}$, adding the above inequality \eqref{eq: est-1,1}, and using the smallness of $\delta_S$, we obtain 
	\begin{equation*}
	\begin{aligned}
	&\frac{1}{4}\norm{\omega}_{L^2(\mathbb{R})}^2+\frac{c_*}{4 \max(1,C_3)} (\norm{\psi_x}_{L^2(\mathbb{R})}^2+\|\omega_x\|_{L^2(\R)}^2) \\
	&\quad +\frac{c_*}{8 \max(1,C_3)} \int_0^t \left( \norm{\psi_{xx}}_{L^2 (\mathbb{R})}^2 + \norm{\omega_x}_{L^2(\mathbb{R})}^2 + \norm{\omega_{xx}}_{L^2(\mathbb{R})}^2 \right) \, d s+ \frac{3}{4} \int_0^t \left( \norm{\omega}_{L^2(\mathbb{R})}^2 + \norm{\omega_x}_{L^2(\mathbb{R})}^2 \right) \, d s \\ 
	&\le C (\norm{\phi_0}_{L^2(\R)}^2+\norm{\psi_0}_{H^1(\R)}^2+\norm{\omega_0}_{H^1(\R)}^2) + \frac{1}{2} \norm{\phi}_{L^2(\mathbb{R})}^2+\frac12 \norm{\psi}_{L^2(\R)}^2 + C \delta_S \int_0^t | \dot{X}|^2 \, ds \\
	&\quad + C \int_0^t \norm{\psi_x}_{L^2(\mathbb{R})}^2 \, d s + \frac{c_*}{2} \int_0^t \norm{\phi_x}_{L^2(\mathbb{R})}^2 \, d s  +C \delta_S \int_0^t \left(G_1+ G^S \right) \, d s .
	\end{aligned}
	\end{equation*}
	Then, using the inequality \eqref{eq: omega 2-norm inequality}, we obtain the following result:
	\begin{equation} \label{eq: est-4}
	\begin{aligned}
	&\norm{\psi_x}_{L^2(\R)}^2+ \norm{\omega}_{L^2(\R)}^2+\norm{\omega_x}_{L^2(\mathbb{R})}^2 + \int_0^t \left( \norm{\psi_{xx}}_{L^2 (\mathbb{R})}^2 + \norm{\omega}_{H^2(\mathbb{R})}^2  \right) \, d s \\
	&\le C (\norm{\phi_0}_{L^2(\R)}^2+\norm{\psi_0}_{H^1(\R)}^2+\norm{\omega_0}_{H^1(\R)}^2)\\
	&\quad + C_4 \left( \norm{\phi}_{L^2(\R)}^2+\norm{\psi}_{L^2(\mathbb{R})}^2 +  \delta_S \int_0^t \left( | \dot{X}|^2 +G_1+G^S \right) ds +  \int_0^t \norm{\psi_x}_{L^2(\mathbb{R})}^2  d s \right).\\
	\end{aligned}
	\end{equation}
	\noindent $\bullet$ (Step 3): Finally, by multiplying the inequality \eqref{eq: est-4} obtained above by $\frac{1}{2 \max (1,C_4)}$, adding it to the inequality in Lemma $\ref{Main Lemma}$ \eqref{energy-est}, and using the smallness of $\delta_S$, we get
	\begin{equation*}
	\begin{aligned}
	& \frac{1}{2} \left(\norm{\phi}_{L^2(\R)}^2+\norm{\psi}_{L^2(\R)}^2+\norm{\omega}_{L^2(\mathbb{R})}^2\right) + \frac{1}{2 \max (1,C_4)}(\norm{\psi_x}_{L^2(\R)}^2+\norm{\omega_x}_{L^2(\mathbb{R})}^2)\\
	&\quad + \frac{\delta_S}{2} \int_0^t | \dot{X}(s)|^2 \, d s   +\frac{1}{2} \int_0^t \left( G_1+G_3+G^S+\norm{\psi_x}_{L^2(\mathbb{R})}^2 \right) \, ds\\ 
	&\quad + \frac{1}{2 \max(1,C_4)} \int_0^t \left(\norm{\psi_{xx}}_{L^2(\mathbb{R})}^2 + \norm{\omega}_{H^2(\mathbb{R})}^2  \right)\, ds \\ 
	& \le C( \norm{\phi_0}_{L^2(\R)}^2+\norm{\psi_0}_{H^1(\R)}^2+\norm{\omega_0}_{H^1(\R)}^2 ),
	\end{aligned}
	\end{equation*}
	which proves the desired results.

	\begin{appendix}
		\section{Proof of \eqref{Diffusion}} \label{Diffusion proof}
		\setcounter{equation}{0}
		Here, we provide the detailed proof of \eqref{Diffusion}. Recall that $2$-viscous shock wave satisfies the following equations:
		\begin{equation}\label{viscous shock}
		\begin{aligned}
		\begin{cases}
		&-\sigma (\tv)'-(\tu)'=0,\\
		&-\sigma (\tu)'+(p(\tv))'=\Big( \dfrac{(\tu)'}{\tv} \Big)'+\Big( \dfrac{(\tw)'}{\tv^{5/2}}\Big)',\\
		&-\sigma (\tw)'=\Big(-\dfrac{(\tu)'}{\tv^{5/2}}\Big)',\\
		&(\tv,\tu,\tw)(-\infty)=(v_-,u_-,w_-), \quad (\tv,\tu)(+\infty)=(v_+,u_+,w_+).
		\end{cases}
		\end{aligned}
		\end{equation}
		We integrate \eqref{viscous shock} over $(-\infty, x ]$ to obtain
		\[ \frac{(\tu)'}{\tv}=-\sigma(\tu-u_-)+\big( p(\tv)-p(v_-) \big)-\frac{(\tw)'}{\tv^{5/2}}.\]
		On the one hand, since
		\begin{equation}\label{dy/dx} 
		\delta_S \frac{1}{\tv} \Big( \frac{dy}{d x}\Big)=-\frac{(\tu)'}{\tv},
		\end{equation}
		we have
		\[\delta_S \frac{1}{\tv} \Big( \frac{dy}{d \xi}\Big)-\frac{(\tw)'}{\tv^{5/2}}=\sigma(\tu-u_-)-\big( p(\tv)-p(v_-)\big).\]
		On the other hand, from \eqref{viscous shock}, \eqref{dy/dx} and the smallness of $\delta_S$, we have
		\begin{equation}\label{ ineq : tw'/tv^{5/2}}
		\-C \delta_S \frac{1}{\tv} \frac{dy}{dx} \le -\frac{(\tw)'}{\tv^{5/2}} \le C \delta_S \frac{1}{\tv} \frac{dy}{dx}.
		\end{equation}
		Using \eqref{dy/dx} and \eqref{ ineq : tw'/tv^{5/2}}, we obtain 
		\[ \delta_S(1-C \delta_S) \frac{1}{\tv} \frac{dy}{dx} \le \sigma (\tu-u_-)-(p(\tv)-p(v_-)) \le \delta_S(1+C \delta_S) \frac{1}{\tv} \frac{dy}{dx}, \]
		which together with $y=\frac{u_- -\tu}{\delta_S}$ and $1-y=\frac{\tu-u_+}{\delta_S}$ yield
		\begin{equation} \label{ineq of A}
		\frac{(1-C \delta_S)}{y(1-y)} \frac{1}{\tv} \frac{dy}{dx} \le \underbrace{\frac{\delta_S}{(u_--\tu)(\tu-u_+)} \Big( \sigma(\tu-u_-)-\big(p(\tv)-p(v_-)\big) \Big)}_{=:A} \le \frac{(1+C \delta_S)}{y(1-y)} \frac{1}{\tv} \frac{dy}{dx}.
		\end{equation}
		
		Now, we compute $A$ more precisely as
		\begin{align*}
		A&=\frac{\delta_S}{(u_--\tu)(\tu-u_+)} \Big( \sigma(\tu-u_-)-\big(p(\tv)-p(v_-)\big) \Big)\\
		&=\frac{\delta_S}{u_--u_+}\left(\frac{ \sigma(\tu-u_-)-\big(p(\tv)-p(v_-)\big)}{u_--\tu}+\frac{ \sigma(\tu-u_-)-\big(p(\tv)-p(v_-)\big)}{\tu-u_+} \right)\\
		&=\frac{ \sigma(\tu-u_-)-\big(p(\tv)-p(v_-)\big)}{u_--\tu}+\frac{ \sigma(\tu-u_-)-\big(p(\tv)-p(v_-)\big)}{\tu-u_+} \\
		&=\frac{ -\sigma^2(\tv-v_-)-\big(p(\tv)-p(v_-)\big)}{\sigma(\tv-v_-)}+\frac{- \sigma^2(\tv-v_-)-\big(p(\tv)-p(v_-)\big)}{-\sigma(\tv-v_+)}.
		\end{align*}
		In addition, since $\sigma^2=\frac{p(v_-)-p(v_+)}{v_+-v_-}$, we have
		\begin{align*}
		A
		& =-\frac{1}{\sigma}\left(\frac{p(v_-)-p(v_+)}{v_+-v_-}+\frac{p(\tv)-p(v_-)}{\tv-v_-} \right)+\frac{1}{\sigma}\left( \frac{p(v_-)-p(v_+)}{v_+-v_-} \frac{\tv-v_-}{\tv-v_+}+\frac{p(\tv)-p(v_-)}{\tv-v_+}\right)\\
		& =\frac{1}{\sigma}\left(\frac{p(v_-)-p(v_+)}{v_+-v_-} \frac{v_+-v_-}{\tv-v_+}+\frac{p(\tv)-p(v_-)}{\tv-v_+}-\frac{p(\tv)-p(v_-)}{\tv-v_-} \right)\\
		& =\frac{1}{\sigma}\left(\frac{p(\tv)-p(v_+)}{\tv-v_+}-\frac{p(\tv)-p(v_-)}{\tv-v_-} \right).
		\end{align*}
		Then, we estimate perturbation between $A$ and $\frac{\sigma}{2\sigma_\ell}\frac{ \delta_S v''(p_-)}{ |v'(p_-)|^2 }$ as
		\begin{equation} \label{A-est}
		\begin{aligned}
		\left|   A-\frac{\sigma}{2\sigma_\ell}\frac{ \delta_S v''(p_-)}{ |v'(p_-)|^2 } \right|
		\le \underbrace{\Bigg| A-\frac{1}{2}\frac{\delta_S v''(p_+)}{ |v'(p_+)|^2 } \Bigg|}_{=:A_1}  +\underbrace{\Bigg| \frac{1}{2}\frac{ \delta_S v''(p_+)}{ |v'(p_+)|^2  }  -\frac{\sigma}{2\sigma_\ell}\frac{ \delta_S v''(p_-)}{ |v'(p_-)|^2 }\Bigg| }_{=:A_2 }.
		\end{aligned}
		\end{equation}
		Indeed, we apply Lemma \ref{B.1.} below, when $p_-=p(v_+),p_+=p(v_-)$ and $p=p(\tv)$, we have
		\begin{equation*}
		\begin{aligned}
		A_1
		&=\frac{1}{|\sigma|}\left|   \frac{p(\tv)-p(v_+)}{\tv-v_+}+\frac{p(\tv)-p(v_-)}{v_--\tv}+\frac{1}{2}\frac{ v''(p_+)}{ |v'(p_+)|^2 }(p(v_+)-p(v_-)) \right| \leq C \delta_S^2.
		\end{aligned}
		\end{equation*}
		On the other hand, we use \eqref{shock_speed_est} and \eqref{shock-property} to observe that $A_2 \le C \delta_S^2$. 
		
		Therefore, we have the desired estimate by using \eqref{ineq of A} and \eqref{A-est} with $|A| \le C \delta_S$ as
		\begin{equation*} 
		\left|   \frac{1}{y(1-y)} \frac{1}{\tv} \left( \frac{dy}{dx} \right)-\frac{\sigma}{2\sigma_\ell}\frac{ \delta_S v''(p_-)}{ |v'(p_-)|^2 } \right| \le C \delta_S^2.
		\end{equation*}
		

		Finally, we close this section by giving the proof of the following lemma.
		\begin{lemma}\label{B.1.} For any $r>0$,  there exists $\varepsilon_0>0$ and $C>0$ such that the following holds.  For any $v_-,v_+,v>0$ such that $v_- \in (r,2r),  v_+-v_-=: \varepsilon \in (0,\varepsilon_0)$,  $v_- \leq v \leq v_+$,  and $p,p_-,p_+$ such that $p(v)=p,p(v_\pm)=p_\pm$,  we have
			\begin{equation*}
			\left|\frac{p-p_-}{v-v_-}+\frac{p-p_+}{v_+-v}+\frac{1}{2}\frac{v''(p_-)}{v'(p_-)^2}(p_--p_+) \right| \leq C \varepsilon^2.
			\end{equation*}
		\end{lemma}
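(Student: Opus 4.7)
The plan is to view $v$ as a smooth function of the pressure via $v(p)=p^{-1/\gamma}$, so that each of the two secant terms becomes the reciprocal of a secant slope of $v(\cdot)$. Since $v_-$ lies in the fixed compact interval $(r,2r)$ and $|v_+-v_-|=\varepsilon$, smoothness of $p\mapsto v$ gives $|p_+-p_-|\le C\varepsilon$, and from $v_-\le v\le v_+$ we also get $|p-p_\pm|\le C\varepsilon$. Moreover $v'(p)=-\tfrac{1}{\gamma}p^{-1/\gamma-1}$ and $v''(p)=\tfrac{1+\gamma}{\gamma^2}p^{-1/\gamma-2}$ are bounded, and $|v'(p)|$ is bounded below, uniformly on the relevant compact $p$-range, which legitimizes the Taylor expansions and the reciprocations below.

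First I would expand $v(\cdot)$ to second order about $p_-$ and $p_+$:
\begin{align*}
\frac{v(p)-v(p_-)}{p-p_-} &= v'(p_-)+\tfrac{1}{2}v''(p_-)(p-p_-)+O(\varepsilon^2),\\
\frac{v(p_+)-v(p)}{p_+-p} &= v'(p_+)-\tfrac{1}{2}v''(p_+)(p_+-p)+O(\varepsilon^2).
\end{align*}
Inverting via $(a+b)^{-1}=a^{-1}-b/a^2+O(b^2)$ gives
\begin{align*}
\frac{p-p_-}{v-v_-} &= \frac{1}{v'(p_-)}-\tfrac{1}{2}\frac{v''(p_-)}{v'(p_-)^2}(p-p_-)+O(\varepsilon^2),\\
\frac{p_+-p}{v_+-v} &= \frac{1}{v'(p_+)}+\tfrac{1}{2}\frac{v''(p_+)}{v'(p_+)^2}(p_+-p)+O(\varepsilon^2).
\end{align*}

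Using $\tfrac{p-p_+}{v_+-v}=-\tfrac{p_+-p}{v_+-v}$ and subtracting, the zeroth-order part is $\tfrac{1}{v'(p_-)}-\tfrac{1}{v'(p_+)}$, which by Taylor expansion equals $\tfrac{v''(p_-)}{v'(p_-)^2}(p_+-p_-)+O(\varepsilon^2)$. Combining with $\tfrac{v''(p_+)}{v'(p_+)^2}=\tfrac{v''(p_-)}{v'(p_-)^2}+O(\varepsilon)$ and the trivial identity $(p_+-p_-)=(p-p_-)+(p_+-p)$, the three first-order contributions collapse:
$$\frac{v''(p_-)}{v'(p_-)^2}\bigl[(p_+-p_-)-\tfrac{1}{2}(p-p_-)-\tfrac{1}{2}(p_+-p)\bigr]=\tfrac{1}{2}\frac{v''(p_-)}{v'(p_-)^2}(p_+-p_-).$$
Thus
$$\frac{p-p_-}{v-v_-}+\frac{p-p_+}{v_+-v}=-\tfrac{1}{2}\frac{v''(p_-)}{v'(p_-)^2}(p_--p_+)+O(\varepsilon^2),$$
which is exactly the claim after moving the main term to the left-hand side.

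The only technical point to monitor is uniformity of the Taylor remainders in $v_-\in(r,2r)$: for $\varepsilon$ sufficiently small, all three pressures $p,p_-,p_+$ lie in a single compact subinterval of $(0,\infty)$ depending only on $r$, and on that subinterval $v,v',v''$ and $1/v'$ are bounded with $|v'|$ bounded away from $0$, so the implied $O(\varepsilon^2)$ constants depend only on $r$. There is no deeper obstacle; the proof is just careful Taylor-expansion bookkeeping.
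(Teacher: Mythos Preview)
Your proof is correct. Both your argument and the paper's rest on second-order Taylor expansion, but you organize it dually: you expand the inverse function $v(p)$ about $p_\pm$, obtain the secant slopes $\tfrac{v-v_\pm}{p-p_\pm}$, and then reciprocate, whereas the paper expands $p(v)$ about $v_\pm$, divides by $v-v_\pm$ directly, and only at the end invokes the chain-rule identity $\tfrac{d^2p}{dv^2}=-\tfrac{v''(p)}{v'(p)^2}\tfrac{dp}{dv}$ to recognize the coefficient $\tfrac{v''(p_-)}{v'(p_-)^2}$. Your route has the mild advantage that the target coefficient appears naturally from the expansion of $v(p)$, so no inverse-function derivative identity is needed; the paper's route avoids the reciprocation step and the attendant need to bound $|v'|$ away from zero (which you do handle correctly via compactness in $p$). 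The two arguments are of comparable length and difficulty.
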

		\begin{proof}
			Consider the function $p(v)=v^{-\gamma}$.  Then,  using a Taylor expansion at $v_-$ and $v_+$,  we find that there exists $\varepsilon_0$ such that for any $|p-p_-| \leq \varepsilon_0$ and $|p-p_+| \leq \varepsilon_0$ we have
			\begin{eqnarray} 
			\label{1}
			\left| p-p_- - \frac{dp}{dv}(v_-)(v-v_-)-\frac{1}{2} \frac{d^2p}{dv^2}(v_-)(v-v_-)^2 \right| \leq C|v-v_-|^3,\\
			\label{2}
			\left| p-p_+ - \frac{dp}{dv}(v_+)(v-v_+)-\frac{1}{2} \frac{d^2p}{dv^2}(v_+)(v-v_+)^2 \right| \leq C|v-v_+|^3.
			\end{eqnarray}
			Since
			\begin{equation*}
			\frac{d^2 p}{dv^2}=\frac{d}{dv}\left(\frac{1}{v'(p)}\right)=-\frac{v''(p)}{v'(p)^2}\frac{dp}{dv},
			\end{equation*}
			we get
			\begin{align}
			\begin{aligned}\label{3}
			&\left| \frac{1}{2} \frac{v''(p_-)}{v'(p_-)^2}(p_- - p_+)+\frac{1}{2} \frac{d^2p}{dv^2}(v_-)(v_- - v_+) \right| \\
			&\hspace{1cm} \leq \frac{v''(p_-)}{2v'(p_-)^2} \left| p_+ - p_ - - \frac{dp}{dv}(v_-)(v_+ -v_-)\right| \leq C \varepsilon^2.
			\end{aligned}
			\end{align}
			Moreover, we note that
			\begin{equation}
			\label{4}
			\begin{split}
			\left| \frac{1}{2} \frac{d^2p}{dv^2}(v_+)(v-v_+)-\frac{1}{2}\frac{d^2p}{dv^2}(v_-)(v-v_-)+\frac{1}{2}\frac{d^2p}{dv^2}(v_-)(v_+-v_-) \right|\\
			=\frac{1}{2} \left|\left( \frac{d^2p}{dv^2}(v_+)-\frac{d^2p}{dv^2}(v_-)\right)(v-v_+)\right| \leq C \varepsilon^2.
			\end{split}
			\end{equation}
			Now, dividing \eqref{1} by $v-v_-$,  \eqref{2} by $v_+-v$, and adding both terms, we obtain
			\[\left|\frac{p-p_-}{v-v_-}+\frac{p-p_+}{v_+-v}-\frac{dp}{dv}(v_-)+\frac{dp}{dv}(v_+)-\frac{1}{2}\frac{d^2p}{dv^2}(v_-)(v-v_-)+\frac{1}{2}\frac{d^2p}{dv^2}(v_+)(v-v_+)\right|\le C\e^2,\]
			which, together with the estimates in \eqref{3} and \eqref{4}, yields
			\begin{equation*}
			\begin{split}
			&\Bigg|\frac{p-p_-}{v-v_-}+\frac{p-p_+}{v_+-v}+\frac{1}{2}\frac{v''(p_-)}{v'(p_-)^2}(p_--p_+) \\
			&\qquad -\left( \frac{dp}{dv}(v_-)-\frac{dp}{dv}(v_+)-\frac{d^2p}{dv^2}(v_-)(v_--v_+)\right) \Bigg| \leq C\varepsilon^2.
			\end{split}
			\end{equation*}
			This gives the result since the second line terms is itself of order $\varepsilon^2$.
		\end{proof}

		\section{Proof of Lemma \ref{lem:shock-property}}\label{sec:app-shock-proof}
		\setcounter{equation}{0}
		
		Since the equivalence of $\tv$ and $\tu$ directly follows from $\tu' = -\sigma \tv'$, we focus on showing the properties of $\tv$. We will here use a parameter $\delta$ to denote the shock strength $\delta= v_+- v_-\ll1$ whereas we used, in the a priori estimates, the shock strength $\delta_S= u_- -u_+$ that is equivalent to $\delta$.  
		
		We first note that the system of ODEs \eqref{viscous-dispersive-shock} can be written as a second-order ODE with respect to $\tv$ as
		\begin{equation} \label{eq : tv''}
		\tv''=f(\tv)-\sigma \tv^4 \tv' + \frac{5 (\tv')^2}{2 \tv},
		\end{equation}
		where
		\begin{equation} \label{eq : f(tv)}
		f(\widetilde{v})=-\widetilde{v}^5 \left(\sigma^2(\widetilde{v}-v_-)+p(\widetilde{v})-p(v_-) \right).
		\end{equation}
		For the notational simplicity, we drop the tilde, i.e., $v=\tv$ in the rest of the proof. 
		
		We will apply Fenichel's theory on invariant manifold (for example, see \cite{Jones} or references therein).
		For that, we first rescale $v$ as $\overline{v}$, which is defined by
		\beq\label{scalev}
		v(x) = \delta\overline{v}(\delta x)+v_-+\frac{\delta}{2},
		\eeq
		and introduce a slow variable $z=\delta x$, where $\delta$ denotes the shock strength $\delta= v_+- v_-\ll1$.
		Note that
		\[v''(x) = \delta^3 \overline{v}_{zz}(z),\quad v'(x) = \delta^2\overline{v}_z(z).\]
		This and \eqref{eq : tv''} imply that $\overline{v}=\overline{v}(z)$ satisfies the following ODE:
		\begin{align*}
		\delta\overline{v}_{zz}&=-\left(\delta \overline{v}+v_-+\frac{\delta }{2}\right)^5\frac{1}{\delta^2}\left(\sigma^2\left(\delta\overline{v}+\frac{\delta}{2}\right)+p\left(\delta \overline{v}+v_-+\frac{\delta }{2}\right)-p(v_-)\right)\\
		&\quad -\sigma\left(\delta\overline{v}+v_-+\frac{\delta}{2}\right)^4 \overline{v}_z+\frac{5}{2}\frac{\delta^2}{\delta\overline{v}+v_--\frac{\delta}{2}}(\overline{v}_z)^2.
		\end{align*}
		We now rewrite the above equation as the system of first-order ODEs with respect to $(\overline{v},w:=\overline{v}_z)$:
		\begin{align}
		\begin{aligned}\label{system-slow}
		\overline{v}_z &= w,\\
		\delta w_z &=-\left(\delta \overline{v}+v_-+\frac{\delta }{2}\right)^5\frac{1}{\delta^2} \underbrace{\left(\sigma^2\left(\delta\overline{v}+\frac{\delta}{2}\right)+p\left(\delta \overline{v}+v_-+\frac{\delta }{2}\right)-p(v_-)\right)}_{=:J(\overline{v};\delta)}\\
		&\quad -\sigma\left(\delta\overline{v}+v_-+\frac{\delta}{2}\right)^4 w+\frac{5}{2}\frac{\delta^2}{\delta\overline{v}+v_-+\frac{\delta}{2}}w^2.\\
		\end{aligned}
		\end{align}
		First, using $\sigma^2=-\frac{p(v_+)-p(v_-)}{v_+ -v_-},  \delta=v_+-v_-$, and $\delta \overline{v}+v_-+\frac{\delta }{2}>0$ by \eqref{scalev}, the above system has the two critical points $(\overline{v}, w)=(\pm\frac{1}{2},0)$ only.
		Indeed, $\overline{v}= \pm\frac{1}{2}$ are the unique solutions of
		\[
		J(\overline{v};\delta) = 0.
		\]
		Next, in order to find the critical manifold associated with \eqref{system-slow} at $\delta=0$, we use the Taylor expansion of $J(\overline{v};\delta)$ w.r.t. $\delta$ as follows:
		\begin{align*}
		\begin{aligned}
		J(\overline{v};\delta) = \sigma^2\left(\delta\overline{v}+\frac{\delta}{2}\right)+ p'(v_-)\left(\delta\overline{v}+\frac{\delta}{2}\right) +\frac{p''(v_-)}{2} \left(\delta\overline{v}+\frac{\delta}{2}\right)^2 + O(\delta^3),
		\end{aligned}
		\end{align*}
		which together with $\sigma^2=-\frac{p(v_+)-p(v_-)}{v_+ -v_-}= -\frac{p(v_+)-p(v_-)}{\delta}$ implies
		\[
		J(\overline{v};\delta) = \delta^2 \frac{p''(v_-)}{2} \left(\overline{v}^2-\frac{1}{4}\right) + O(\delta^3).
		\]
		Thus, by letting $\delta=0$ on the second equation of \eqref{system-slow}, we deduce that the critical manifold $M_0$ is the graph of the following equation:
		\[w=\frac{v_-p''(v_-)}{2\sigma}\left(\frac{1}{4}-\overline{v}^2\right).\]
		Thus, we use the Fenichel's first theorem (see \cite[Theorem 2]{Jones}) to derive that
		\[M_\delta:=\left\{(\overline{v},w) \in \bbr^2 ~  \bigg|~w=\frac{v_-p''(v_-)}{2\sigma}\left(\frac{1}{4}-\overline{v}^2\right)+\delta h(\overline{v},\delta)\right\}\]
		is a locally invariant manifold under the flow of the system \eqref{system-slow}, where the function $h$ is smooth jointly in $\overline{v}$ and $\delta$. In other words, there exists a neighborhood $\mathcal{N}\subset \bbr^2$ such that if a solution $(\overline{v}(z),w(z))$ of \eqref{system-slow} starting from a point of $M_\delta$ stays on $\mathcal{N}$, then it stays on $M_\delta$. 
		
		We will now show that there exists a solution $(\overline{v},w):(-\infty,\infty)\to \R^2$ to \eqref{system-slow} whose range (globally) lies on $M_\delta$, and show that it is indeed the viscous-dispersive shock profile that we are looking for, that is, $(\overline{v}(z),w(z))\to (0,\pm\frac12)$ as $z\to\pm \frac{1}{2}$.
		
		
		
		To this end, we observe that the smooth function $F(y)= \frac{v_-p''(v_-)}{2\sigma}\left(\frac{1}{4}-y^2\right)+\delta h(y,\delta)$ satisfies $F(0)>0$ by $\delta\ll1$, since  $h$ is smooth. Thus, there exists a unique non-decreasing solution $\overline{v}_0:\bbr\to\bbr$ of
		\begin{equation}\label{ODE2}
		(\overline{v}_0)_z = \frac{v_-p''(v_-)}{2\sigma}\left(\frac{1}{4}-\overline{v}_0^2\right)+\delta h(\overline{v}_0,\delta),
		\end{equation}
		and $\overline{v}_0(0)=0$, satisfying
		\beq\label{standingend}
		\lim_{z\to\pm \infty} \overline{v}_0(z) = \overline{v}_\pm,\qquad \mbox{where $F(\overline{v}_\pm)=0$ and $F(y)>0$ for all $y\in (\overline{v}_-,\overline{v}_+)$}.
		\eeq
		Then, by letting $w_0(z) := (\overline{v}_0)_z(z)$, it holds from \eqref{ODE2} that the curve $(\overline{v}_0(z),w_0(z))$ lies on the manifold $M_\delta$. \\
		We will show that $(\overline{v}_0(z),w_0(z))$ is the solution to the system \eqref{system-slow}. For a fixed $z_0\in\bbr$, the Cauchy-Lipschitz theorem implies that 
		the system \eqref{system-slow} has a unique local Lipschitz continuous solution  $(\overline{v}(z),w(z))$ starting from $(\overline{v}(z_0),w(z_0))= (\overline{v}_0(z_0),w_0(z_0)) \in M_\delta$. In addition, since the manifold $M_\delta$ is locally invariant, the range of the unique local solution $(\overline{v}(z),w(z))$ belongs to $M_\delta$. 
		That is, the local solution $(\overline{v}(z),w(z))$ satisfies 
		\begin{equation}\label{ODE3}
		\overline{v}_z= w= \frac{v_-p''(v_-)}{2\sigma}\left(\frac{1}{4}-\overline{v}^2\right)+\delta h(\overline{v},\delta).
		\end{equation}
		This together with the uniqueness of \eqref{ODE2} implies that $\overline{v}=\overline{v}_0$ and $w=w_0$ near the point $z_0$. Thus, $(\overline{v}_0,w_0)$ is a solution to \eqref{system-slow} near $z_0$. Since this holds for any $z_0\in \R$, we conclude that $(\overline{v}_0,w_0)$ globally satisfies \eqref{system-slow}. Finally, by \eqref{standingend}, the two end points $\overline{v}_{\pm}$ should be the $v$-component of the critical point of the system \eqref{system-slow}, that is, $\overline{v}_{\pm} = \pm\frac{1}{2}$. Therefore, we conclude that the solution $(\overline{v}_0(z),w_0(z))$ connects the end points $(\pm\frac{1}{2},0)$. By the uniqueness of the heteroclinic orbit from $(-\frac{1}{2},0)$ to $(\frac{1}{2},0)$, any solution $(\overline{v},w)$ to \eqref{system-slow} coincides with $(\overline{v}_0,w_0)$. This implies that the smooth monotone solution to the second-order ODE \eqref{eq : tv''} coincides with $\delta\overline{v}_0(\delta x)+v_-+\frac{\delta}{2}$, whose profile monotonically connects $v_-$ to $v_+$.
		
		The remaining part is to show that the desired bounds on the derivatives of $v$ hold. However, since $\overline{v}$ satisfies the ODE \eqref{ODE3}, and $|\overline{v}|\le \frac12$, we have $|\overline{v}_z|\le C$ for a constant $C$ independent of $\delta$. Therefore, we get
		\[|v'(x)| = \delta^2 |\overline{v}_z(z)| \le C\delta^2\]
		and
		\[|v''(x)|=\delta^3|\overline{v}_{zz}(z)|\le C\delta^3|\overline{v}||\overline{v}_z|+C\delta^4|\pa_{\overline{v}}h(\overline{v},\delta)||\overline{v}_z|\le C\delta |v'(x)|.\]
		
		Now, from \eqref{eq : tv''} and \eqref{eq : f(tv)}, we obtain
		\[C\tv'\ge \frac{\sigma}{\tv}  \tv'+\frac{\tv''}{\tv^5}-\frac{5(\tv')^2}{2\tv^6} = -(\sigma^2(\tv-v_\pm)+p(\tv)-p(\tv_{\pm}))\ge C(\tv-v_-)(v_+-\tv), \]
		which yields
		\[|\tv(x)-v_-|\le |\tv(0)-v_-|e^{-C\delta|x|},\quad x<0.\]
		The other inequalities can be obtained in a similar manner.

	\end{appendix}
	
	\bibliographystyle{amsplain}
	\bibliography{reference} 
	
\end{document}